\documentclass[12pt]{amsart}



\usepackage{amssymb}

\usepackage{enumerate}

\usepackage{graphicx}

\makeatletter
\@namedef{subjclassname@2010}{%
  \textup{2010} Mathematics Subject Classification}
\makeatother


\dedicatory{{\rm Appearing in} Acta Arithmetica}


\newtheorem{thm}{Theorem}[section]
\newtheorem{lem}[thm]{Lemma}

\newtheorem{thms}{Theorem}[subsection]
\newtheorem{lems}[thms]{Lemma}
\newtheorem{props}[thms]{Proposition}

\newtheorem*{xcor}{Corollary}




\theoremstyle{definition}



\numberwithin{equation}{section}


\frenchspacing

\textwidth=13.5cm
\textheight=23cm
\parindent=16pt
\oddsidemargin=-0.5cm
\evensidemargin=-0.5cm
\topmargin=-0.5cm


\usepackage{color}

%
%

\def\(#1;#2){\left\{\begin{array}{l} 
#1 \\ #2
\end{array}\right.}
\def\listtwo(#1;#2;#3;#4){\left\{\begin{array}{cc} 
#1 & #2 \\ 
#3 & #4
\end{array}\right.}			       
\def\listthree(#1;#2;#3;#4;#5;#6){\left\{\begin{array}{ll} 
#1 & #2 \\
#3 & #4 \\
#5 & #6 
\end{array}\right.}		       
\def\smallmattwo(#1;#2;#3;#4){\left(\begin{smallmatrix} 
#1 & #2 \\
#3 &#4
\end{smallmatrix}\right)}
\def\ssmallmattwo(#1;#2;#3;#4){\left.\begin{smallmatrix} 
#1 & #2 \\
#3 &#4
\end{smallmatrix}\right.}			       
\def\mattwo(#1;#2;#3;#4){\left(\begin{array}{cc}
#1 & #2 \\
#3 &#4
\end{array}\right)}
\def\(#1;#2){\left\{\begin{array}{l} 
#1 \\
#2 \end{array}\right.}
\def\vecttwo(#1;#2){\left(\begin{array}{c} 
#1 \\
#2
\end{array}\right)} 
\def\matthree(#1;#2;#3;#4;#5;#6;#7;#8;#9){\left(\begin{array}{ccc} 
#1 & #2 & #3 \\
#4 & #5 &#6 \\
#7 & #8 &#9
\end{array}\right)}			       
\def\smallmatthree(#1;#2;#3;#4;#5;#6;#7;#8;#9){\left(\begin{smallmatrix}{ccc} 
#1 & #2 & #3 \\
#4 & #5 & #6 \\
#7 & #8 & #9
\end{smallmatrix}\right)}		       
\def\vectthree(#1;#2;#3){\left(\begin{array}{c} 
#1 \\
#2 \\
#3
\end{array}\right)}

\newcommand\trans[1]{{}^t \!#1}


\begin{document}


\baselineskip=17pt



\title[Koecher-Maass series of a half-integral weight form]
{Koecher-Maass series of a certain half-integral weight modular form \\
related to the Duke-Imamo{\=g}lu-Ikeda lift}

\author[H. Katsurada]{Hidenori Katsurada}
\address{Muroran Institute of Technology \\ 
Mizumoto 27-1 Muroran, 050-8585, Japan} 
\email{hidenori@mmm.muroran-it.ac.jp}

\author[H. Kawamura]{Hisa-aki Kawamura}
\address{Department of Mathematics, Hokkaido University \\ 
Kita 10, Nishi 8, Kita-Ku, Sapporo, 060-0810, Japan} 
\email{
hkawamura@sci.hokudai.ac.jp}

\date{
}

\begin{abstract}
Let $k$ and $n$ be positive even integers. For a cuspidal Hecke eigenform $h$ in the Kohnen plus space of weight $k-n/2+1/2$ for $\varGamma_0(4),$ let $f$ be the corresponding primitive form of weight $2k-n$ for ${SL}_2({\bf Z})$ under the Shimura correspondence, and $I_n(h)$ the Duke-Imamo{\=g}lu-Ikeda lift of $h$ to the space of cusp forms of weight $k$ for $Sp_n({\bf Z})$. Moreover, let $\phi_{I_n(h),1}$ be the first Fourier-Jacobi coefficient of $I_n(h)$ and $\sigma_{n-1}(\phi_{I_n(h),1})$ be the cusp form  in the generalized Kohnen plus space of weight $k-1/2$ corresponding to $\phi_{I_n(h),1}$ under the Ibukiyama isomorphism. 
We then give an explicit formula for the Koecher-Maass series $L(s,\sigma_{n-1}(\phi_{I_n(h),1}))$ of $\sigma_{n-1}(\phi_{I_n(h),1})$ expressed in terms of the usual $L$-functions of $h$ and $f$. 
\end{abstract}

\subjclass[2010]{Primary 11F67; Secondary 11F46}

\keywords{Koecher-Maass series, Duke-Imamoglu-Ikeda lift}

\maketitle

\section{Introduction} 

Let  $l$ be an integer or a half integer, and let $F$ be a modular form of weight $l$ for the congruence subgroup $\varGamma_0^{(m)}(N)$ of the symplectic group $Sp_m({\bf Z}).$ 
Then the Koecher-Maass series $L(s,F)$ of $F$ is defined as 
 $$L(s,F)=\sum_{A} {c_F(A)  \over e(A)(\det A)^s},$$
where $A$ runs over a complete set of representatives for 
 the $SL_m({\bf Z})$-equivalence classes of positive definite half-integral matrices of degree $m$,
$c_F(A)$ is the $A$-th Fourier coefficient of $F,$ and $e(A)$ denotes the order of the special orthogonal group of $A.$
We note that $L(s, F)$ can also be obtained by the Mellin transform of $F,$ and thus, its analytic properties are relatively known. (As for this, see Maass \cite{M} and Arakawa \cite{Ar1,Ar2,Ar3}.)
Now we are interested in an explicit form of the Koecher-Maass series for a specific choice of $F$. In particular, whenever $F$ is a certain lift of an elliptic modular form $h$ of either integral or half-integral weight, we may hope to express $L(s,F)$ in terms of certain Dirichlet series related to $h$. Indeed, this expectation is verified in the case where $F$ is a lift of $h$ such that the weight $l$ is an integer  (cf. \cite{I-K1,I-K2,I-K3}). In this paper, we discuss a similar problem for 
a lift of elliptic modular forms to half-integral weight Siegel modular forms. 

Let us explain our main result briefly. 
Let $k$ and $n$ be positive even integers.  For a  cuspidal Hecke eigenform $h$ in the Kohnen plus space of weight $k-n/2+1/2$ for $\varGamma_0(4),$  let  $f$ be  the primitive form of weight $2k-n$ for $SL_2({\bf Z})$ corresponding to $h$ under the Shimura correspondence, and let $I_n(h)$ be the Duke-Imamo{\=g}lu-Ikeda lift of $h$ (or of $f$) to the space of  cusp forms of weight $k$ for $Sp_n({\bf Z}).$
We note that $I_2(h)$ is nothing but the Saito-Kurokawa lift of $h.$ 
Let $\phi_{I_n(h),1}$ be the first coefficient of the Fourier-Jacobi expansion of $I_n(h)$ and 
$\sigma_{n-1}(\phi_{I_n(h),1})$ the cusp form in the generalized Kohnen plus space of weight $k-1/2$ 
for   $\varGamma_0^{(n-1)}(4)$ corresponding to $\phi_{I_n(h),1}$ 
 under the Ibukiyama isomorphism $\sigma_{n-1}.$ (For the precise definitions of 
 the Duke-Imamo{\=g}lu-Ikeda lift, the generalized Kohnen plus space and the Ibukiyama isomorphism, see Section 2.)  Then our main result expresses $L(s,\sigma_{n-1}(\phi_{I_n(h),1}))$
 in terms of $L(s,h)$ and $L(s,f)$ (cf. Theorem 2.1).  To prove Theorem 2.1, for a fundamental discriminant $d_0$ and a prime number $p$ we define 
certain formal power series $P_{n-1,p}^{(1)}(d_0,\varepsilon^l,X,t)  \in {\bf C}[X,X^{-1}][[t]]$ associated with some local Siegel series appearing in the $p$-factor of the Fourier coefficient of $\sigma_{n-1}(\phi_{I_n(h),1}).$ Here $\varepsilon$ is the Hasse invariant defined on the set of nondegenerate symmetric matrices with entries in ${\bf Q}_p.$ 
We then rewrite $L(s,\sigma_{n-1}(\phi_{I_n(h),1}))$ in terms of the Euler products $\prod_pP_{n-1,p}^{(1)}(d_0,\varepsilon^l,\beta_p,p^{-s+k/2+n/4-1/4})$ with $l=0,1,$ where $\beta_p$ is the Satake $p$-parameter of $f$  (cf. Theorem 3.2). By using a method similar to those in \cite{I-K2,I-K3}, in Section 4, we get an explicit formula for the formal power series $P_{n-1,p}^{(1)}(d_0,\varepsilon^l,X,t)$ (cf. Theorem 4.4.1), which yields the desired formula for $L(s,\sigma_{n-1}(\phi_{I_n(h),1}))$ immediately. 
The result is very simple, and the 
proof proceeds similarly to the one of \cite{I-K3}, where we gave an explicit formula for the Koecher-Maass series of the Siegel Eisenstein series of integral weight. 
However, it is more elaborate than the preceding one. For instance, we should be careful in dealing with the argument for $p=2$, which divides the level of $\sigma_{n-1}(\phi_{I_n(h),1})$. We also note that the method in this paper is useful for giving an explicit formula for the Rankin-Selberg series of $\sigma_{n-1}(\phi_{I_n(h),1}),$ and as a consequence, we can  prove a conjecture of Ikeda \cite{Ik2} concerning the period of the Duke-Imamo{\=g}lu-Ikeda lift, which we will discuss in \cite{K-K}.  \\

\noindent
{\bf Notation.}  
Let $R$ be a commutative ring. We denote by $R^{\times}$ and $R^*$  the semigroup of non-zero elements of $R$ and the unit group of $R,$  respectively. We also put $S^{\Box}=\{a^2 \, | \, a \in S \}$ for a subset $S$ of  $R.$ We denote by $M_{ml}(R)$ the set of $m \times l$ matrices with entries in $R.$ In particular put $M_m(R)=M_{mm}(R).$ Put $GL_m(R) = \{A \in M_m(R) \, | \, \det A \in R^* \},$ and $SL_m(R)=\{ A \in GL_m(R) \, | \, \det A=1 \},$ where $\det
A$ denotes the determinant of a square matrix $A$. For an $m \times l$ matrix $X$ and an $m \times m$ matrix $A$, we write $A[X] = {}^t X A X,$ where $^t X$ denotes the transpose of $X$. Let $S_m(R)$ denote the set of symmetric matrices of degree $m$ with entries in $R.$ Furthermore, if $R$ is an integral domain of characteristic different from $2,$ let  ${\mathcal L}_m(R)$ denote the set of half-integral matrices of degree $m$ over $R$, that is, ${\mathcal L}_m(R)$ is the subset of symmetric matrices of degree $m$ with entries in the field of fractions of $R$,
whose $(i,j)$-th entry belongs to $R$ or ${1 \over 2}R$ according as $i=j$ or not.  
In particular, we put ${\mathcal L}_m={\mathcal L}_m({\bf Z})$ and ${\mathcal L}_{m,p}={\mathcal L}_m({\bf Z}_p)$ for a prime number $p.$ 
  For a subset $S$ of $M_m(R)$ we denote by $S^{\times}$ the subset of $S$
consisting of all nondegenerate matrices. If $S$ is a subset of $S_m({\bf R})$ with ${\bf R}$, the field of real numbers, we denote by $S_{>0}$ (resp. $S_{\ge 0}$) the subset of $S$
consisting of positive definite (resp. semi-positive definite) matrices. 
The group $GL_m(R)$ acts on the set $S_m(R)$ as $GL_m(R) \times S_m(R) \ni (g,A) \mapsto {}^tg Ag \in S_m(R).$ Let $G$ be a subgroup of $GL_m(R).$ 
For a $G$-stable subset ${\mathcal B}$ of $S_m(R)$, 
we denote by ${\mathcal B}/G$ the set of equivalence classes of ${\mathcal B}$ under the action of $G.$ We sometimes identify ${\mathcal B}/G$ with a complete set of representatives of ${\mathcal B}/G.$ We abbreviate ${\mathcal B}/GL_m(R)$ as ${\mathcal B}/\!\!\sim$ if there is no fear of confusion. For a given ring $R'$, two symmetric matrices $A$ and $A'$ with
entries in $R$ are said to be {\it equivalent over $R'$} to each
other and write $A \sim_{R'} A'$ if there is
an element $X$ of $GL_m(R')$ such that $A'=A[X].$ We also write $A \sim A'$ if there is no fear of confusion. 
For square matrices $X$ and $Y$ we write $X \bot Y = 
\left(\begin{smallmatrix}
X & O \\
O & Y
\end{smallmatrix}\right)$. 

For an integer $D
$ with 
$D \equiv 0 \textrm{ \,or } 1 \text{ mod }4$, 
let $\mathfrak{d}_D$ be the discriminant of ${\bf Q}(\sqrt{D}),$ and put $\mathfrak{f}_D= 
\sqrt{{D \over \mathfrak{d}_D}}.$ 
We call $D$ a {\it fundamental discriminant} if it is either $1$ or the discriminant of some quadratic field extension of ${\bf Q}$. 
For a fundamental discriminant $D,$ let \vspace*{-2mm}$\left(\displaystyle{D \over * }\right)$ be the character corresponding to ${\bf Q}(\sqrt{D})/{\bf Q}.$ 
Here we make the convention that  $\left(\displaystyle{D  \over * } \right)=1$ if $D=1.$ 

We put ${\bf e}(x)=\exp(2 \pi \sqrt{-1} x)$ for $x \in {\bf C}.$ For a prime number $p$ we denote by $\nu_p(*)$ the additive valuation of ${\bf Q}_p$ normalized so that $\nu_p(p)=1,$ and by ${\bf e}_p(*)$ the continuous additive character of ${\bf Q}_p$ such that ${\bf e}_p(x)= {\bf e}(x)$ for $x \in {\bf Q}.$ 

\section{Main  result }

 Put $J_m=
 \left(
 \begin{smallmatrix}
 O_m & -1_m \\
 1_m & O_m
 \end{smallmatrix}
 \right)$, where $1_m$ and $O_m$ denotes the unit matrix and the zero matrix of degree $m$, respectively. 
 Furthermore, put $\varGamma^{(m)}=Sp_m({\bf Z})=\{M \in GL_{2m}({\bf Z})   \, | \,  J_m[M]=J_m \}. $
 Let $l$ be an integer or a half integer. For a congruence subgroup $\varGamma$ of $\varGamma^{(m)},$ we denote by $\mathfrak{M}_{l}(\varGamma)$  the space of holomorphic  modular forms of weight $l$ for $\varGamma.$
We denote by $\mathfrak{S}_{l}(\varGamma)$ the subspace of $\mathfrak{M}_{l}(\varGamma)$ consisting of all cusp forms. 
For a positive integer $N$,
put $\varGamma_0^{(m)}(N)=\left\{\left. 
\left(
 \begin{smallmatrix}
 A & B \\
 C & D
 \end{smallmatrix}
 \right)  \in \varGamma^{(m)} \, \right| \, C \equiv O_m \text{ mod }N \,\right\}$.  
Let $F(Z)$ be an element of $\mathfrak{M}_{l}(\varGamma_0^{(m)}(N)).$ Then 
$F(Z)$ has the following Fourier expansion:
$$F(Z)= \sum_{A \in {({\mathcal L}_{m})}_{\ge 0} } c_F(A) {\bf e}({\rm tr}(AZ)),$$
where ${\rm tr}(X)$ denotes the trace of a matrix $X.$ We then define the Koecher-Maass  series $L(s,F)$ of $F$ as 
$$L(s,F)=\sum_{A \in {({\mathcal L}_{m})}_{>0}/SL_{m}({\bf Z})} {c_F(A)  \over e(A) (\det A)^s},$$ 
where $e(A)=\#\{X \in SL_{m}({\bf Z}) \, | \, A[X]=A \}.$ We note that $L(s,F)$ is nothing but Hecke's $L$-function of $F$ if $m=1$ and $l$ is an integer.

Now put
$${\mathcal L}'_{m}=\{A \in {\mathcal L}_{m}\, | \, A \equiv -\ ^t {r}r  \ {\rm mod} \ 4{\mathcal L}_{m} \text{ for some } r \in {\bf Z}^{m} \}.$$
For $A \in {\mathcal L}'_{m},$ the integral vector $r \in {\bf Z}^{m}$ in the above definition is uniquely determined modulo $2{\bf Z}^{m}$ by $A,$ and is denoted by $r_A.$ Moreover it is easily shown that 
the matrix 
\[\left(\begin{array}{cc}
1&r_A/2 \\
{}^tr_A/2&{({}^tr_Ar_A+A)/4}
\end{array}\right),
\]
which will be denoted by $A^{(1)}$ in the sequel, belongs to ${\mathcal L}_{m+1},$ and that its $SL_{m+1}({\bf Z})$-equivalence class is uniquely determined by $A.$ Suppose that $l$ is a positive even integer.  We define the {\it generalized Kohnen plus space} of weight $l-1/2$ for  $\varGamma_0^{(m)}(4)$ as 
\[
\mathfrak{M}_{l-1/2}^{+}(\varGamma_0^{(m)}(4))=\{ F
\in \mathfrak{M}_{l-1/2}(\varGamma_0^{(m)}(4)) \, |  \,
c_F(A)=0 \text{ unless } A \in {\mathcal L}'_{m}
\},
\]
and put $\mathfrak{S}_{l-1/2}^{+}(\varGamma_0^{(m)}(4))=\mathfrak{M}_{l-1/2}^{+}(\varGamma_0^{(m)}(4)) \cap \mathfrak{S}_{l-1/2}(\varGamma_0^{(m)}(4)).$ 
 Then there exists an isomorphism from  the space of Jacobi forms of index $1$ to  the generalized Kohnen plus space due to Ibukiyama. To explain this, 
let $\varGamma_J^{(m)} = \varGamma^{(m)} \ltimes {\bf {\rm H}}_{m}({\bf Z})$ be the Jacobi group, 
where $\varGamma^{(m)}$ is identified with its image inside $\varGamma^{(m+1)}$ via the natural embedding 
\[\left(
\begin{array}{cc}
A & B \\ 
C & D
\end{array}
\right) \mapsto 
\left(
\begin{array}{cc|cc}
1 & & 0 & \\
 & A &   & B \\ \hline 
 0 & &1& \\
 & C & &D
\end{array} 
\right)\]
and 
\begin{eqnarray*}
\lefteqn{ {\rm H}_m({\bf Z})=} \\
&& \left\{ 
\left.
\left(
\begin{array}{cc|cc}
  1 & 0 & \kappa & \mu  \\
  0 & 1_m & \trans{\mu} & O_m  \\ \hline
  {} & {} & 1 & 0  \\
  {} & {} & 0 & 1_m  \\
\end{array}
\right)
\left(
\begin{array}{cc|cc}
  1 & \lambda & {} & {}  \\
  0 & 1_m & {} & {}  \\ \hline
  {} & {} & 1 & 0  \\
  {} & {} & -\trans{\lambda} & 1_m  \\
\end{array}
\right) 
\, \right| \! 
\begin{array}{l}
(\lambda,\mu) \in {\bf Z}^m \oplus {\bf Z}^m, \\[1mm]
  \,\, \kappa \in {\bf Z} 
\end{array}
\!
\right\}.
\end{eqnarray*} 
Let $J_{l,\, 1}(\varGamma_J^{(m)})$ denote the space of Jacobi  forms of weight $l$ and index $1$ for $\varGamma_J^{(m)}$,
and $J_{l,\, 1}^{{\rm cusp}}(\varGamma_J^{(m)})$ the subspace of $J_{l,\, 1}(\varGamma_J^{(m)})$ consisting of all 
cusp forms. 
Let $\phi(Z,z) \in J_{l,\, 1}(\varGamma_J^{(m)}).$ 
Then we  have the following Fourier  expansion:
$${\phi}(Z,\,z)= 
\displaystyle\sum_{\scriptstyle T \in {\mathcal L}_{m},\, r \in {\bf Z}^{m}, \atop {\scriptstyle 4T-{}^t {r}r \ge 0}}
c_{\phi}(T,r){\bf e}({\rm tr}(T Z)+r^t{z}). 
$$
 We then define  $\sigma_m(\phi)$ as
$$\sigma_{m}(\phi)=\sum_{A \in  {({\mathcal L}_{m}')}_{\ge 0}}c_{\phi}((A+{}^tr_Ar_A)/4,r_A){\bf e}({\rm tr}(AZ)),$$
where $r=r_A$ denotes an element of ${\bf Z}^{m}$ such that $A+{}^tr_Ar_A \in 4 {\mathcal L}_{m}.$ This $r_A$ is uniquely determined modulo $2{\bf Z}^{m},$ and $c_{\phi}((A+{}^tr_Ar_A)/4,r_A)$ does not depend on the choice of the representative of $r_A \ {\rm mod} \ 2{\bf Z}^{m}.$ 
Then Ibukiyama \cite{Ib} showed that the mapping $\sigma_{m}$ gives 
a ${\bf C}$-linear isomorphism 
$
J_{l,\, 1}(\varGamma_J^{(m)}) \simeq
 \mathfrak{M}_{l-1/2}^{+}(\varGamma_0^{(m)}(4)),$ and in particular,  
$\sigma_{m}({J_{l,\, 1}^{\, {\rm cusp}}}(\varGamma_J^{(m)}))= \mathfrak{S}_{l-1/2}^{+}(\varGamma_0^{(m)}(4)).$
We call $\sigma_{m}$ the {\it Ibukiyama isomorphism}.
\bigskip

Let $p$ be a prime number.  For a non-zero element $a \in {\bf Q}_p$ we put $\chi_p(a)=1,-1,$ or
 $0$ according as ${\bf Q}_p(a^{1/2})={\bf Q}_p, {\bf Q}_p(a^{1/2})$ is
 an unramified quadratic extension of ${\bf Q}_p,$ or ${\bf Q}_p(a^{1/2})$
 is a ramified quadratic extension of ${\bf Q}_p.$
 We note that  $\chi_p(D)=\left({\displaystyle D \over \displaystyle p }\right)$ if $D$ is a fundamental discriminant. For the rest of this section, let $n$ be a positive even  integer.
  For an element  $T$ of ${\mathcal L}_{n,p}^{\times},$ put
 $\xi_p(T)=\chi_p((-1)^{n/2} \det T).$ 
  Let $T$ be an element of ${\mathcal L}_n^{\times}.$  Then  $(-1)^{n/2} \det (2T) \equiv 0 $ or $1 \ {\rm mod} \ 4,$ and  we define $\mathfrak{d}_T$ and $\mathfrak{f}_T$ as $\mathfrak{d}_T =\mathfrak{d}_{(-1)^{n/2}\det (2T)}$ and $\mathfrak{f}_T =\mathfrak{ f}_{(-1)^{n/2}\det (2T)},$ respectively. 
 For an element $T$ of ${\mathcal L}_{n,p}^{\times},$ there exists an element $\widetilde T$ of ${\mathcal L}_n^{\times}$ such that $\widetilde T \sim_{{\bf Z}_p} T.$ We then put $\mathfrak{e}_p(T)= \nu_p(\mathfrak{f}_{\widetilde T}),$ and $[\mathfrak{d}_T]=\mathfrak{d}_{\widetilde T} \ {\rm mod} \ {{\bf Z}_p^*}^{\Box}.$ They do not depend on the choice of $\widetilde T.$  We note that $(-1)^{n/2} \det (2T)$ can be expressed as $(-1)^{n/2} \det (2T) =dp^{2\mathfrak{e}_p(T)} \ {\rm mod} \ {{\bf Z}_p^*}^{\Box}$ for any $d \in [\mathfrak{d}_T].$ 

For each $T \in {\mathcal L}_{n,p}^{\times}$ we define the {\it local Siegel series} $b_p(T,s)$ by 
  $$b_p(T,s)=\sum_{R \in S_n({\bf Q}_p)/S_n({\bf Z}_p)} {\bf e}_p({\rm tr}(TR))p^{-\nu_p(\mu_p(R))s},$$  
where $\mu_p(R)=[R{\bf Z}_p^n+{\bf Z}_p^n:{\bf Z}_p^n].$ 
 We remark that there exists a unique polynomial 
 $F_p(T,X)$ in $X$ such that 
 $$b_p(T,s)=F_p(T,p^{-s}){(1-p^{-s})\prod_{i=1}^{n/2} (1-p^{2i-2s}) \over 1-\xi_p(T)p^{n/2-s}}$$ 
(cf. Kitaoka \cite{Ki1}). 
We then define a polynomial $\widetilde F_p(T,X)$ in $X$ and $X^{-1}$ as
$$\widetilde F_p(B,X)=X^{-\mathfrak{e}_p(T)}F_p(T,p^{-(n+1)/2}X).$$
We remark that  $\widetilde{F}_p(B,X^{-1})=\widetilde{F}_p(B,X)$ (cf. \cite{Kat1}). \vspace*{2mm}
Now, for a positive even integer $k$, let 
 $$h(z)=\sum_{\scriptstyle m \in {\bf Z}_{>0}, \atop {\scriptstyle (-1)^{n/2}m \equiv 0, 1 \, {\rm mod}\, 4 }}c_h(m){\bf e}(mz)$$
  be a Hecke eigenform in the Kohnen plus space $\mathfrak{S}_{k-n/2+1/2}^+(\varGamma_0(4))$ and 
$$f(z)=\sum_{m=1}^{\infty}c_f(m){\bf e}(mz)$$
 the primitive form in $\mathfrak{S}_{2k-n}(\varGamma^{(1)})$ corresponding to $h$ under the  Shimura correspondence (cf. Kohnen \cite{Ko}). Let $\beta_p \in {\bf C}$ such that $\beta_p+\beta_p^{-1}=p^{-k+n/2+1/2}c_f(p),$ which we call the {\it Satake $p$-parameter} of $f$. 
We define a Fourier series $I_n(h)(Z)$ on 
${\bf H}_n$ by
$$I_n(h)(Z)= \sum_{T \in {({\mathcal L}_n)}_{> 0}} c_{I_n(h)}(T){\bf e}({\rm tr}(TZ)),$$
 where
 $$c_{I_n(h)}(T)=c_h(|\mathfrak{d}_T|) \mathfrak{f}_T^{k-n/2-1/2} \prod_p\widetilde F_p(T,\beta_p).$$ 
Then Ikeda \cite{Ik1} showed  that 
$I_n(h)(Z)$ is a Hecke eigenform in $\mathfrak{S}_k(\varGamma^{(n)})$ whose 
standard $L$-function coincides with $\zeta(s)\prod_{i=1}^n L(s+k-i,f),$ where $\zeta(s)$ is Riemann's zeta function. 
 The existence of such a Hecke eigenform was conjectured by Duke and Imamo{\=g}lu in their unpublished paper. 
  We call $I_n(h)$ the {\it Duke-Imamo{\=g}lu-Ikeda lift} of $h$ (or of $f$) as in Section 1. Let $\phi_{I_n(h),1}$ be the first coefficient of the Fourier-Jacobi expansion of $I_n(h),$ that is, 
\[
I_n(h)\!\left(\left(
\begin{array}{cc}
w & z \\ 
{}^t z & Z
\end{array}
\right)\right) = \sum_{N=1}^{\infty} \phi_{I_n(h),N}(Z,\,z){\bf e}(Nw), \]  
where $Z \in {\bf H}_{n-1}$, $z \in {\bf C}^{n-1}$ and $w \in {\bf H}_1$. We easily see that $\phi_{I_n(h),1}$ belongs to $J_{k-1/2,\, 1}^{{\rm cusp}}(\varGamma_J^{(n-1)})$ and 
  $$\phi_{I_n(h),1}(Z,\,z)= 
\displaystyle\sum_{\scriptstyle T \in {\mathcal L}_{n-1},\, r \in {\bf Z}^{n-1}, \atop {\scriptstyle 4T-{}^t {r}r > 0}}
c_{I_n(h)}({\smallmattwo(1;r/2;{}^tr/2;T)}){\bf e}({\rm tr}(T Z)+r^t{z}). 
$$
Moreover we have 
 $$\sigma_{n-1}( \phi_{I_n(h),1})(Z)=\sum_{T \in {({\mathcal L}_{n-1}')}_{>0}} c_{I_n(h)}(T^{(1)}){\bf e}({\rm tr}(T Z)).$$ 
Put $\Gamma_{{\bf C}}(s)=2(2\pi)^{-s} \Gamma(s),$
and $\widetilde{\xi}(s) = \Gamma_{{\bf C}}(s) \zeta(s)$. 
Then our main result in this paper is stated as follows:

\begin{thm}
Let $h$ and $f$ be as above.  Then we have
\begin{eqnarray*}
\lefteqn{L(s,\sigma_{n-1}(\phi_{I_n(h),1}))=2^{-\delta_{2,n}-s(n-2)-(n-2)/2}\prod_{i=1}^{(n-2)/2} \widetilde \xi(2i) } \\
&& \times \left\{L(s-n/2+1,h)\prod_{i=1}^{(n-2)/2} L(2s-n+2i+1,f) \right. \\
&& \hspace*{10mm}+\left.(-1)^{n(n-2)/8}L(s,h)\prod_{i=1}^{(n-2)/2} L(2s-n+2i,f)\right\},
\end{eqnarray*}
where $\delta_{2,n}$ denotes Kronecker's delta. 
\end{thm}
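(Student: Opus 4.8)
The plan is to unfold the definition of the Koecher-Maass series for $\sigma_{n-1}(\phi_{I_n(h),1})$, insert the explicit formula for the Fourier coefficients $c_{I_n(h)}(T^{(1)})$, and organize the resulting Dirichlet series into an Euler product whose local factors are the formal power series $P_{n-1,p}^{(1)}(d_0,\varepsilon^l,X,t)$ introduced in Section~3. Concretely, I would start from
\[
L(s,\sigma_{n-1}(\phi_{I_n(h),1}))=\sum_{T\in ({\mathcal L}_{n-1}')_{>0}/SL_{n-1}({\bf Z})}\frac{c_{I_n(h)}(T^{(1)})}{e(T)(\det T)^s},
\]
and substitute $c_{I_n(h)}(T^{(1)})=c_h(|\mathfrak{d}_{T^{(1)}}|)\,\mathfrak{f}_{T^{(1)}}^{k-n/2-1/2}\prod_p \widetilde F_p(T^{(1)},\beta_p)$. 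The standard device (as in \cite{I-K2,I-K3}) is to split the sum according to the genus/discriminant data $(\mathfrak{d}_{T^{(1)}},\mathfrak{f}_{T^{(1)}})$ of the degree-$n$ half-integral matrix $T^{(1)}$: fixing a fundamental discriminant $d_0$ and writing $\mathfrak{f}=\prod_p p^{\mathfrak{e}_p}$, one groups together all classes $T$ with $[\mathfrak{d}_{T^{(1)}}]_p$ prescribed by the Hasse invariant $\varepsilon^l$ at each $p$, and uses the product formula for local densities together with the mass formula to convert $\sum_T 1/(e(T)(\det T)^s)$ into a product over $p$ of local sums. This is exactly the passage to Theorem~3.2, which expresses $L(s,\sigma_{n-1}(\phi_{I_n(h),1}))$ as a finite linear combination (indexed by $l=0,1$, i.e. by the two relevant Hasse invariants coming from the plus-space condition $T\in{\mathcal L}_{n-1}'$) of terms of the shape $L(s-?,h)\cdot\prod_p P_{n-1,p}^{(1)}(d_0,\varepsilon^l,\beta_p,p^{-s+k/2+n/4-1/4})$, with an extra archimedean-type correction absorbing the $\mathfrak{f}_{T^{(1)}}$-power and the $\zeta$-factors $\widetilde\xi(2i)$.

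With Theorem~3.2 in hand, the proof reduces to evaluating the Euler product $\prod_p P_{n-1,p}^{(1)}(d_0,\varepsilon^l,\beta_p,t)$. For this I would invoke Theorem~4.4.1, which gives a closed rational expression for each $P_{n-1,p}^{(1)}(d_0,\varepsilon^l,X,t)$ as an explicit product of factors of the form $(1-p^j t^2)^{-1}$, $(1-X p^{a} t)(1-X^{-1}p^{a}t)$ type pieces, times an elementary prefactor depending on $\mathfrak{e}_p$, $\varepsilon^l$, and $\xi_p$. Multiplying over all primes and recognizing the resulting Euler products: $\prod_p(1-c_f(p)p^{-w}+p^{2k-n-1-2w})^{-1}=L(w,f)$ (Hecke $L$-function of the primitive form $f$, using $\beta_p+\beta_p^{-1}=p^{-k+n/2+1/2}c_f(p)$), $\prod_p(1-c_h$-type factor$)^{-1}$ recognized as $L(w,h)$ via the relation between the plus-space eigenvalues of $h$ and the Satake parameters of $f$ under the Shimura correspondence, and $\prod_p(1-p^{2i-2w})^{-1}=\zeta(2w-2i)$ repackaged with the gamma factors into $\widetilde\xi(2i)$. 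Carrying out the substitution $t=p^{-s+k/2+n/4-1/4}$ and bookkeeping the exponent shifts yields precisely the two-term bracket $\{L(s-n/2+1,h)\prod_i L(2s-n+2i+1,f)+(-1)^{n(n-2)/8}L(s,h)\prod_i L(2s-n+2i,f)\}$ together with the prefactor $2^{-\delta_{2,n}-s(n-2)-(n-2)/2}\prod_{i=1}^{(n-2)/2}\widetilde\xi(2i)$.

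The main obstacle is the prime $p=2$, which divides the level $4$ of $\sigma_{n-1}(\phi_{I_n(h),1})$: the plus-space condition $T\in{\mathcal L}_{n-1}'$, the definition of $T^{(1)}$, and the $2$-adic structure of the local Siegel series all interact delicately there, so the local factor $P_{n-1,2}^{(1)}$ and its contribution must be computed separately and carefully, and it is the source of the Kronecker delta $\delta_{2,n}$ and the power-of-$2$ normalization in the final formula. A secondary technical point is tracking the sign $(-1)^{n(n-2)/8}$: it arises from quadratic Gauss sums / Hasse-invariant identities when comparing the two pieces $l=0$ and $l=1$, and one must be scrupulous about the normalization of $\xi_p$, $\chi_p$, and the discriminant conventions ($(-1)^{n/2}\det(2T)$ versus $\det T$) to get it right. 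Everything else — the unfolding, the genus decomposition, and the recognition of standard $L$-functions — follows the template of \cite{I-K3}, so the novelty and the care lie almost entirely in the $p=2$ analysis and in the explicit computation of the power series $P_{n-1,p}^{(1)}$ in Section~4.
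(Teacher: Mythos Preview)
Your proposal is correct and follows essentially the same route as the paper: reduce to Theorem~3.2 via the genus decomposition and Siegel's mass formula, evaluate the local Euler factors $\prod_p P_{n-1,p}^{(1)}(d_0,\varepsilon^l,\beta_p,p^{-s+k/2+n/4-1/4})$ via Theorem~4.4.1, and then recognize the remaining sum over fundamental discriminants $d_0$ as $L(s,h)$ (resp.\ $L(s-n/2+1,h)$) by means of the Shimura-correspondence identity recorded as Proposition~4.4.4. One small correction: the Kronecker delta $\delta_{2,n}$ enters already at the stage of Theorem~3.2 through the mass formula in degree $n-1$ (the case $n-1=1$ is exceptional there), not through the $2$-adic local factor.
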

In the case of $n=2,$ the modular form $\sigma_{n-1}(\phi_{I_n(h),1})$ is $h$ itself,   and then the above formula is trivial.
We note that, unlike the cases of \cite{I-K1,I-K2,I-K3}, there does not appear any convolution product of modular forms in the above theorem.
However, the proof may not be so simple because 
the nature of Fourier coefficients of the modular form 
$\sigma_{n-1}(\phi_{I_n(h),1})$ is much more complicated rather than those in the papers cited above. 

\section{Reduction to local computations}
By definition, it turns out that the Fourier coefficient of $\sigma_{n-1}(\phi_{I_n(h),1})$ can be expressed as a product of local Siegel series, and therefore  we can reduce the problem to local computations. To explain this, we recall some terminologies and notation.
For given $a,b \in {\bf Q}_p^{\times}$ let $(a,b)_p$ denote the Hilbert symbol over ${\bf Q}_p.$ 
Following Kitaoka \cite{Ki2}, we define the {\it Hasse invariant} $\varepsilon(A)$ of $A \in S_m({\bf Q}_p)^{\times}$ by 
$$\varepsilon(A)=\prod_{1 \le i \le j \le m}(a_i,a_j)_p$$
if $A$ is equivalent to $a_1 \bot \cdots \bot a_m$ over ${\bf Q}_p$ with some $a_1,a_2,...,a_m \in {\bf Q}_p^{\times}.$ We note that this definition does not depend on the choice of $a_1,a_2,...,a_m.$ 

Now put
$${\mathcal L}_{m,p}'=\{ A \in {\mathcal L}_{m,p} \, | \, A \equiv -\ ^t {r}r  \ {\rm mod} \ 4{\mathcal L}_{m,p} \ { \rm for \ some } \ r \in {\bf Z}_p^{m} \} .$$
Furthermore we put  $S_{m}({\bf Z}_p)_e=2{\mathcal L}_{m,p}$ and $S_{m}({\bf Z}_p)_o=S_{m}({\bf Z}_p) \setminus S_{m}({\bf Z}_p)_e$. We note that 
 ${\mathcal L}_{m,p}'={\mathcal L}_{m,p}=S_{m}({\bf Z}_p)$ if $p\not=2.$ Let $T \in {\mathcal L}_{m-1,p}'.$ Then there exists an element $r \in {\bf Z}_p^{m-1}$  such that $\smallmattwo(1;r/2;{}^tr/2;{(T+{}^trr)/4})\in 
 {\mathcal L}_{m,p}.$  As is easily shown, $r$ is uniquely determined by $T$, modulo $2{\bf Z}_p^{m-1},$ and is denoted by $r_T.$ Moreover as will be shown in the next lemma, $\smallmattwo(1;r_T/2;{}^tr_T/2;{(T+{}^tr_Tr_T)/4})$ is uniquely determined by $T$, up to $GL_m({\bf Z}_p)$-equivalence, and is denoted by $T^{(1)}.$ 


\begin{lem}
Let $m$ be a positive integer. 
\begin{enumerate}
\item[{\rm (1)}] 
Let $A$ and $B$ be elements of ${\mathcal L}_{m-1,p}'.$ 
Then $\smallmattwo(1;r_A/2;{}^tr_A/2;{(A+{}^tr_Ar_A)/4}) \sim \smallmattwo(1;r_B/2;{}^tr_B/2;{(B+{}^tr_Br_B)/4})$ if $A \sim B.$
\item[{\rm (2)}] Let $A \in {\mathcal L}_{m-1,p}'.$ 
\begin{enumerate}
\item[{\rm (2.1)}]  Let $p\not=2.$ Then  
$A^{(1)}  \sim \mattwo(1;0;0;A).$

\item[{\rm (2.2)}] {
Let $p=2.$ If  $r_A \equiv 0 \ {\rm mod} \ 2,$ then 
$A \sim 4B$ with $B \in {\mathcal L}_{m-1,2},$ and 
$ A^{(1)}  \sim \mattwo(1;0;0;B).$ In particular, $\nu_2((\det B)) \ge m$ or $m+1$ according as $m$ is even or odd. 
If  $r_A \not\equiv 0 \ {\rm mod} \ 2,$ then 
$A \sim a \bot 4B$ with $a \equiv -1 \ {\rm mod} \ 4$ and $B \in {\mathcal L}_{m-2,2}$ and  we have
$A^{(1)}  \sim \matthree(1;1/2;0;1/2;{(a+1)/4};0;0;0; B).$ In particular, $\nu_2((\det B)) \ge m$ or $m-1$ according as $m$ is even or odd.
}
\end{enumerate}
\end{enumerate}
\end{lem}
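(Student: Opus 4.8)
The plan is to prove the three assertions in turn, with part~(1) also serving as a bookkeeping tool for part~(2) and (applied with $A=B$) as the promised proof that $A^{(1)}$ is well defined up to $GL_m({\bf Z}_p)$-equivalence. \emph{Part (1).} Suppose $B=A[W]$ with $W\in GL_{m-1}({\bf Z}_p)$, and set $\widetilde W=1_1\bot W\in GL_m({\bf Z}_p)$. First I would compute, using $({}^tr_Ar_A)[W]={}^t(r_AW)(r_AW)$, that
\[
\left(\begin{smallmatrix}1 & r_A/2\\ {}^tr_A/2 & (A+{}^tr_Ar_A)/4\end{smallmatrix}\right)[\widetilde W]=\left(\begin{smallmatrix}1 & (r_AW)/2\\ {}^t(r_AW)/2 & (B+{}^t(r_AW)(r_AW))/4\end{smallmatrix}\right).
\]
Since $\mathcal L_{m,p}$ is $GL_m({\bf Z}_p)$-stable the right-hand side lies in $\mathcal L_{m,p}$, so $B\in\mathcal L'_{m-1,p}$ and, by the uniqueness of $r_B$ modulo $2{\bf Z}_p^{m-1}$, $r_AW\equiv r_B\pmod{2{\bf Z}_p^{m-1}}$. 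It then remains to check that replacing $r$ by $r+2v$ ($v\in{\bf Z}_p^{m-1}$) inside $\left(\begin{smallmatrix}1 & r/2\\ {}^tr/2 & (B+{}^trr)/4\end{smallmatrix}\right)$ only conjugates it by $\left(\begin{smallmatrix}1 & v\\ 0 & 1_{m-1}\end{smallmatrix}\right)\in GL_m({\bf Z}_p)$, which one verifies by expanding ${}^t(r+2v)(r+2v)$. Combining the two steps gives $A^{(1)}[\widetilde W]\sim B^{(1)}$, hence $A^{(1)}\sim B^{(1)}$; the second step alone (with $W=1_{m-1}$) shows that $A^{(1)}$ does not depend, up to $GL_m({\bf Z}_p)$-equivalence, on the chosen representative $r_A$.

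\emph{Parts (2.1) and (2.2).} For $p\neq2$ one has $4\in{\bf Z}_p^{*}$, so $\mathcal L'_{m-1,p}=\mathcal L_{m-1,p}$, $r_A=0$ is admissible, $A^{(1)}=1_1\bot(A/4)$, and $A/4\sim A$ through $2\cdot1_{m-1}\in GL_{m-1}({\bf Z}_p)$; this gives (2.1). Now let $p=2$. If $r_A\equiv0\pmod2$, the defining congruence of $\mathcal L'_{m-1,2}$ forces $A\in4\mathcal L_{m-1,2}$, so $A=4B$ with $B\in\mathcal L_{m-1,2}$, and taking $r_A=0$ yields $A^{(1)}=1_1\bot B$ at once. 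If $r_A\not\equiv0\pmod2$, I would first use part~(1) to replace $A$ by a $GL_{m-1}({\bf Z}_2)$-equivalent matrix with $r_A=e_1=(1,0,\dots,0)$; then, writing $A=\left(\begin{smallmatrix}a & v\\ {}^tv & A'\end{smallmatrix}\right)$ with $a\in{\bf Z}_2$, $v$ a $1\times(m-2)$ block with entries in $\tfrac12{\bf Z}_2$, and $A'\in\mathcal L_{m-2,2}$, the condition $A^{(1)}=\left(\begin{smallmatrix}1 & e_1/2\\ {}^te_1/2 & (A+{}^te_1e_1)/4\end{smallmatrix}\right)\in\mathcal L_{m,2}$ (diagonal entries in ${\bf Z}_2$, off-diagonal entries in $\tfrac12{\bf Z}_2$) forces $a\equiv-1\pmod4$, $v\in2{\bf Z}_2^{m-2}$ and $A'=4A''$ with $A''\in\mathcal L_{m-2,2}$. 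Completing the square in the unit $a$ gives $A\sim a\bot4B$ with $B:=A''-a^{-1}\,{}^tvv/4\in\mathcal L_{m-2,2}$. For $A^{(1)}$ itself, write it in $2+(m-2)$ blocks as $\left(\begin{smallmatrix}D & C\\ {}^tC & A''\end{smallmatrix}\right)$ with $D=\left(\begin{smallmatrix}1 & 1/2\\ 1/2 & (a+1)/4\end{smallmatrix}\right)$ and $C$ the $2\times(m-2)$ block with zero first row and second row $v/4$; then $\det D=a/4$ and, since $v\in2{\bf Z}_2^{m-2}$, $D^{-1}C$ has ${\bf Z}_2$-entries, so conjugating $A^{(1)}$ by $\left(\begin{smallmatrix}1_2 & -D^{-1}C\\ 0 & 1_{m-2}\end{smallmatrix}\right)\in GL_m({\bf Z}_2)$ block-diagonalizes it to $D\bot(A''-{}^tCD^{-1}C)=D\bot B$, which is the asserted form. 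The stated lower bounds for $\nu_2(\det B)$ would then follow by carrying the $2$-adic valuations through these normal-form reductions.

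\emph{Expected main obstacle.} Parts~(1) and~(2.1) are essentially formal. The substance lies in the $p=2$ case of part~(2.2): pinning down the precise $2$-adic normal form of $A$ that the membership $A^{(1)}\in\mathcal L_{m,2}$ imposes, and then performing the completing-the-square and block-diagonalization steps by transformations that remain inside $GL_m({\bf Z}_2)$, so that the even-diagonal/half-integrality constraints are respected at every stage — together with the valuation bookkeeping behind the $\nu_2(\det B)$ estimates.
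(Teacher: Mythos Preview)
Your argument is correct and follows exactly the natural line the paper has in mind; indeed the paper's own proof is just the sentence ``The assertion can easily be proved,'' so your write-up is a faithful expansion of it. The only point you leave implicit is the reduction of $r_A\not\equiv0\pmod 2$ to $r_A=e_1$, which is immediate since $GL_{m-1}({\bf Z}/2{\bf Z})$ acts transitively on nonzero vectors and lifts to $GL_{m-1}({\bf Z}_2)$, and the final valuation estimates, which are routine once the normal forms are in hand.
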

\begin{proof} The assertion can easily be proved.
\end{proof}

%
Now let $m$ be a positive even integer. For $T \in ({\mathcal L}'_{m-1})^{\times},$ put  $\mathfrak{d}_T^{(1)}=\mathfrak{d}_{T^{(1)}},$ and $\mathfrak{f}_T^{(1)}=\mathfrak{f}_{T^{(1)}},$ and for $T \in ({\mathcal L}_{m-1,p}')^{\times},$ we define $[\mathfrak{d}_T^{(1)}]$ and $\mathfrak{e}^{(1)}_T$ as $[\mathfrak{d}_{T^{(1)}}]$ and $\mathfrak{e}_{T^{(1)}},$ respectively. These do not depend on the choice of $r_T.$ 
We note that  $(-1)^{m/2} \det T= 2^{m-2}{\mathfrak{f}_T^{(1)}}^2 \mathfrak{d}_T^{(1)}$ for $T \in  ({\mathcal L}'_{m-1})^{\times}.$
We define a polynomial $F_p^{(1)}(T,X)$ in $X,$ and a polynomial $\widetilde F_p^{(1)}(T,X)$ in $X$ and $X^{-1}$ by 
$$F_p^{(1)}(T,X)=F_p(T^{(1)}, X), $$ 
and 
$$\widetilde F_p^{(1)}(T,X)= X^{-\mathfrak{e}_p^{(1)}(T)} F_p^{(1)}(T,p^{-(m+1)/2}X).$$
Let $B$ be  an element of   $({\mathcal L}_{m-1,p}')^{\times}.$ Let $p \not=2.$ Then 
 $$\widetilde F_p^{(1)}(B,X)=\widetilde F_p(1 \bot B,X).$$
 Let $p=2.$ Then  
\begin{eqnarray*}
\lefteqn{
\widetilde{F}_2^{(1)}(B,X)
} \\
&=&\left\{
\begin{array}{cl}
\widetilde F_2({\mattwo(1;1/2;1/2;{(a+1)/4})} \bot B',X) &  
{
\begin{array}{ll}
{\rm if} \ B=a \bot 4B'  \  {\rm with}\\
\hspace{0.5cm} \ a \equiv -1 \ {\rm mod} \ 4, B' \in {\mathcal L}_{m-2,2},
\end{array}
} \\[5mm]
\widetilde F_2(1 \bot B',X) 
& {\rm if} \ B=4B' \  {\rm with} \ B' \in {\mathcal L}_{m-1,2}.
\end{array}
\right.
\end{eqnarray*}

Now let $m$ and $l$ be positive integers such that $m \ge l.$ Then for nondegenerate  symmetric matrices $A$ and  $B$ of degree $m$ and $l$ respectively with entries in ${\bf Z}_p$  we define the {\it local density} $\alpha_p(A,B)$ and the {\it primitive local density} $\beta_p(A,B)$ representing $B$ by $A$ as
$$\alpha_p(A,B)=2^{-\delta_{m,l}}\lim_{a \rightarrow
\infty}p^{a(-ml+l(l+1)/2)}\#{\mathcal A}_a(A,B)$$
 and
 $$\beta_p(A,B)=2^{-\delta_{m,l}}\lim_{a \rightarrow
\infty}p^{a(-ml+l(l+1)/2)}\#{\mathcal B}_a(A,B),$$
where $${\mathcal A}_a(A,B)=\{X \in
M_{ml}({\bf Z}_p)/p^aM_{ml}({\bf Z}_p) \, | \, A[X]-B \in p^aS_l({\bf Z}_p)_e \}$$
and 
$${\mathcal B}_a(A,B)=\{X \in {\mathcal A}_a(A,B) \, | \, 
  {\rm rank}_{{\bf Z}_p/p{\bf Z}_p} (X \ {\rm mod} \ p) =l \}.$$
In particular we write $\alpha_p(A)=\alpha_p(A,A).$ 
 Put  
$${\mathcal F}_{p}=\{d_0 \in {\bf Z}_p \, | \, \nu_p(d_0) \le 1\}$$ 
 if $p$ is an odd prime, and  
$${\mathcal F}_{2}=\{d_0 \in {\bf Z}_2 \, | \,  d_0 \equiv 1 \ {\rm mod} \ 4, \ {\rm  or} \  d_0/4  \equiv -1 \  {\rm mod} \ 4,  \ {\rm or} \ \nu_2(d_0)=3 \}.$$
Let $m$ be a positive integer. For $d \in {\bf Z}_p^{\times}$ put 
 \begin{eqnarray*}
\lefteqn{S_{m}({\bf Z}_p,d)}\\
&=&\{T \in S_{m}({\bf Z}_p) \ |\ (-1)^{[(m+1)/2]} \det T=p^{2i}d \ {\rm mod} \ {{\bf Z}_p^*}^{\Box} \ {\rm with \ some \, } \ i \in {\bf Z} \}, 
\end{eqnarray*}
and $S_{m}({\bf Z}_p,d)_x=S_{m}({\bf Z}_p,d) \cap S_{m}({\bf Z}_p)_x$ for $x=e$ or $o.$   
Put ${\mathcal L}_{m,p}^{(0)}=S_{m}({\bf Z}_p)_e^{\times}$ and ${\mathcal L}_{m,p}^{(1)}=({\mathcal L}_{m,p}')^{\times}.$   We also define ${\mathcal L}_{m,p}^{(j)}(d)= S_{m}({\bf Z}_p,d) \cap {\mathcal L}_{m,p}^{(j)}$  for $j=0,1.$ 
Let $\iota_{m,p}$ be the constant function on ${\mathcal L}_{m,p}^{\times}$ taking the value 1, and $\varepsilon_{m,p}$ the function on ${\mathcal L}_{m,p}^{\times}$ assigning the Hasse invariant of $A$ for $A \in {\mathcal L}_{m,p}^{\times}.$  We sometimes drop the suffix and write $\iota_{m,p}$ as $\iota_p$ or $\iota$ and the others if there is no fear of confusion.
From now on, we sometimes write $\omega=\varepsilon^l$ with $l=0$ or $1$ according as $\omega=\iota$ or $\varepsilon.$ 
 Let $n$ be a positive even  integer.  For $d_0 \in {\mathcal F}_{p}$ and  $\omega=\varepsilon^l$ with $l=0$ or $1$, we  define a  formal power series $P_{n-1,p}^{(1)}(d_0,\omega,X,t)$ in $t$ by 
$$P_{n-1,p}^{(1)}(d_0,\omega,X,t)=\kappa(d_0,n-1,l)^{-1} t^{\delta_{2,p}(2-n)}\hspace*{-2.5mm}\sum_{B \in {\mathcal L}_{n-1,p}^{(1)}(d_0)}  \hspace*{-2.5mm}{\widetilde F_p^{(1)}(B,X) \over \alpha_p(B)}\omega(B)t^{\nu(\det B)},$$
where 
\begin{eqnarray*}
\lefteqn{\kappa(d_0,r-1,l)= \kappa(d_0,r-1,l)_p} \\
&&=\{(-1)^{lr(r-2)/8} 2^{-(r-2)(r-1)/2}\}^{\delta_{2,p}} \cdot 
((-1)^{r/2},(-1)^{r/2}d_0 )_p^l\,\, p^{-(r/2-1)l\nu(d_0)} 
\end{eqnarray*}
 for a positive even integer $r.$ 
This type of formal power series appears in an explicit formula for the Koecher-Maass series associated with 
the Siegel Eisenstein series and the Duke-Imamo{\=g}lu-Ikeda lift (cf. \cite{I-K2}, \cite{I-K3}). Therefore we say that this formal power series is {\it of Koecher-Maass type}. 
An explicit formula for 
$P_{n-1,p}^{(1)}(d_0,\omega,X,t)$  will be given in the next section.  
Let ${\mathcal F}$ denote the set of fundamental discriminants, and for $l=\pm 1,$ put 
$${\mathcal F}^{(l)}=\{ d_0 \in {\mathcal F} \  | \ ld_0 >0 \}.$$
Now let $h$ be a Hecke eigenform in $\mathfrak{S}_{k-n/2+1/2}^+(\varGamma_0(4)),$ and $ f,I_n(h),\phi_{I_n(h),1}$ and $\sigma_{n-1}(\phi_{I_n(h),1})$ be as in Section 2.

\begin{thm}
 Let the notation and the assumption be as above. Then for ${\rm Re}(s) \gg 0,$ we have 
\begin{eqnarray*}
\lefteqn{L(s,\sigma_{n-1}(\phi_{I_n(h),1}))=\kappa_{n-1}2^{-(n-2)s -(n-2)/2 -\delta_{2,n}} } \\[2mm]
&& \hspace*{-5mm}\times 
\left\{\sum_{d_0 \in {\mathcal F}^{((-1)^{n/2})} }c_h(|d_0|)|d_0|^{n/4-k/2+1/4} \prod_p   P_{n-1,p}^{(1)}(d_0,\iota_p,\beta_p,p^{-s+k/2+n/4-1/4}) \right. \\[2mm]
&& \hspace*{5mm}
+ (-1)^{n(n-2)/8} \hspace*{-2mm}\sum_{d_0 \in {\mathcal F}^{((-1)^{n/2})} }c_h(|d_0|)|d_0|^{-n/4-k/2+5/4} \\
&& \hspace*{45mm} \times \left.\prod_p P_{n-1,p}^{(1)}(d_0,\varepsilon_p,\beta_p,p^{-s+k/2+n/4-1/4})\right\},
\end{eqnarray*}
where $\kappa_{n-1}=\prod_{i=1}^{(n-2)/2}\Gamma_{\bf C}(2i).$
\end{thm}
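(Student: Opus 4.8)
The plan is to follow the strategy of \cite{I-K3}. Writing out the Koecher--Maass series and substituting the Fourier coefficient $c_{I_n(h)}(T^{(1)})=c_h(|\mathfrak{d}_{T^{(1)}}|)\,\mathfrak{f}_{T^{(1)}}^{\,k-n/2-1/2}\prod_p\widetilde F_p(T^{(1)},\beta_p)$ together with the identity $\widetilde F_p(T^{(1)},X)=\widetilde F_p^{(1)}(T,X)$, one gets
\[
L(s,\sigma_{n-1}(\phi_{I_n(h),1}))=\sum_{T\in({\mathcal L}'_{n-1})_{>0}/SL_{n-1}({\bf Z})}\frac{c_h(|\mathfrak{d}_{T^{(1)}}|)\,\mathfrak{f}_{T^{(1)}}^{\,k-n/2-1/2}\prod_p\widetilde F_p^{(1)}(T,\beta_p)}{e(T)(\det T)^s}.
\]
Each factor in the numerator depends only on the ${\bf Z}$-genus of $T$ (for $\widetilde F_p^{(1)}$ and $\mathfrak{e}_p^{(1)}$ via Lemma 3.1 (1); for $\mathfrak{d}_{T^{(1)}},\mathfrak{f}_{T^{(1)}}$ since they are determined by $\det T$). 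So the first step is to collect the $SL_{n-1}({\bf Z})$-classes into genera and apply the Siegel mass formula: $\sum_{T\in{\mathcal G}/SL_{n-1}({\bf Z})}e(T)^{-1}$ equals an explicit constant depending only on $n-1$ times $(\det 2T)^{n/2}\prod_p\alpha_p(T)^{-1}$ for $T\in{\mathcal G}$. Convergence for ${\rm Re}(s)\gg0$ legitimizes the rearrangements.

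Next I would organize the sum over genera by the fundamental discriminant $d_0=\mathfrak{d}_{T^{(1)}}$. Since $T$ is positive definite and $(-1)^{n/2}\det(2T^{(1)})=\mathfrak{d}_{T^{(1)}}\,{\mathfrak{f}_{T^{(1)}}}^{2}>0$, one has $d_0\in{\mathcal F}^{((-1)^{n/2})}$ and $c_h(|\mathfrak{d}_{T^{(1)}}|)=c_h(|d_0|)$; moreover $|\det T|=2^{n-2}|d_0|\,{\mathfrak{f}_{T^{(1)}}}^{2}$ lets one rewrite the powers of $\mathfrak{f}_{T^{(1)}}$ and $\det T$ through the local data $\nu_p(\det T)$, $\mathfrak{e}_p^{(1)}(T)$, $\nu_p(d_0)$. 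For fixed $d_0$, the genera of positive definite $T\in{\mathcal L}'_{n-1}$ with $\mathfrak{d}_{T^{(1)}}=d_0$ are exactly the collections of local genera ${\mathcal G}_p\in{\mathcal L}_{n-1,p}^{(1)}(d_0)$ satisfying $\prod_p\varepsilon({\mathcal G}_p)=1$ (Hasse--Minkowski, the archimedean Hasse invariant of a positive definite form being trivial). Since $\tfrac12\bigl(1+\prod_p\varepsilon({\mathcal G}_p)\bigr)$ equals $1$ when $\prod_p\varepsilon({\mathcal G}_p)=1$ and $0$ otherwise, the genus sum attached to $d_0$ splits into one half of each of two Euler products over $p$, one carrying the trivial local weight $\iota_p$ and one the Hasse-invariant weight $\varepsilon_p$: this is the structural origin of the two terms of the theorem.

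It then remains to identify each local Euler factor, after the substitution $t=p^{-s+k/2+n/4-1/4}$, with $\kappa(d_0,n-1,l)_p\,P_{n-1,p}^{(1)}(d_0,\varepsilon^l,\beta_p,t)$ (times the local contribution $p^{-\nu_p(d_0)(k-n/2-1/2)/2}$ coming from the $\mathfrak{f}_{T^{(1)}}$-factor). For odd $p$ one uses $T^{(1)}\sim1\bot T$ (Lemma 3.1 (2.1)), so $\widetilde F_p^{(1)}(T,X)=\widetilde F_p(1\bot T,X)$, $\varepsilon(T^{(1)})=\varepsilon(T)$, and $\nu_p(\det T)=\nu_p(d_0)+2\mathfrak{e}_p^{(1)}(T)$, and the exponents collapse precisely into $t^{\nu_p(\det T)}$; for $p=2$ one invokes the dichotomy of Lemma 3.1 (2.2) and the extra factor $2^{n-2}$ in $|\det T|=2^{n-2}|d_0|\,{\mathfrak{f}_{T^{(1)}}}^2$, which is exactly accounted for by the normalization $t^{\delta_{2,p}(2-n)}$ and the $2$-power inside $\kappa(d_0,n-1,l)_2$; in particular the $s$-dependent part $t^{\,n-2}$ at $p=2$ supplies the factor $2^{-(n-2)s}$ of the prefactor. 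Finally one collects $\prod_p\kappa(d_0,n-1,l)_p^{-1}$: for $l=0$ only the $p=2$ factor survives and gives a pure power of $2$, whereas for $l=1$ Hilbert reciprocity gives $\prod_{p<\infty}\bigl((-1)^{n/2},(-1)^{n/2}d_0\bigr)_p=\bigl((-1)^{n/2},(-1)^{n/2}d_0\bigr)_{\infty}=1$ (as $(-1)^{n/2}d_0>0$) and $\prod_p p^{-(n/2-1)\nu_p(d_0)}=|d_0|^{-(n/2-1)}$, together with the sign $(-1)^{n(n-2)/8}$ from the $p=2$ factor of $\kappa$; this turns the common $|d_0|$-exponent $n/4-k/2+1/4$ into $-n/4-k/2+5/4$ in the second term and inserts the sign $(-1)^{n(n-2)/8}$ there. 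The surviving powers of $2$ and $\pi$ (from the archimedean constant, the $2$-adic discrepancies, and the factor $\tfrac12$) assemble into $\kappa_{n-1}2^{-(n-2)s-(n-2)/2-\delta_{2,n}}$.

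The main obstacle is this last identification at $p=2$: one must fix a normalization of the local density of half-integral quadratic forms, and then track carefully — through the plus-space condition defining ${\mathcal L}'_{n-1,2}$, the Ibukiyama map, and Lemma 3.1 (2.2) — the powers of $2$ relating $\det T$, $\det T^{(1)}$, $\mathfrak{e}_2^{(1)}(T)$, $\nu_2(\mathfrak{d}_{T^{(1)}})$ and $\nu_2(d_0)$, matching them against the intricate definition of $\kappa(d_0,n-1,l)_2$ and the factor $t^{\delta_{2,p}(2-n)}$ in $P_{n-1,2}^{(1)}$; one also has to pin down the exact normalization of the Siegel mass formula so that the archimedean constant is indeed $\kappa_{n-1}$ up to a power of $2$. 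By contrast, the passage from classes to genera, the decomposition by $d_0$, and the $\tfrac12(1+\prod\varepsilon)$ splitting are routine.
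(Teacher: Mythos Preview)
Your proposal is correct and follows essentially the same route as the paper: group $SL_{n-1}({\bf Z})$-classes into genera via the Siegel mass formula (the paper cites Theorem~6.8.1 of \cite{Ki2}), fibre over the fundamental discriminant $d_0$, split the genus sum into two Euler products using $\tfrac12\bigl(1+\prod_p\varepsilon_p\bigr)$ and Hasse--Minkowski (the paper refers to this as ``the same method as in Proposition~2.2 of \cite{I-S}''), then identify the local factors with $P_{n-1,p}^{(1)}$ and collect the global product $\prod_p\kappa_p(d_0,n-1,l)$. Your write-up is in fact more explicit than the paper's on the Hasse--Minkowski step and the archimedean Hilbert symbol, while the paper is terser and relies on back-references; the only place where you remain sketchy is exactly where the paper does too, namely the $2$-adic bookkeeping matching $\nu_2(\det T)$, $\mathfrak e_2^{(1)}(T)$, $\nu_2(d_0)$ against the normalizations $t^{\delta_{2,p}(2-n)}$ and $\kappa(d_0,n-1,l)_2$.
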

\begin{proof}
Let $T \in {({\mathcal L}'_{n-1})}_{>0}.$ Then it follows from Lemma 4.1 that the $T$-th Fourier coefficient $c_{\sigma_{n-1}(\phi_{I_n(h),1})}(T)$  of $\sigma_{n-1}(\phi_{I_n(h),1})$ is uniquely determined by the genus to which $T$ belongs, and by definition, it can be expressed as
$$c_{\sigma_{n-1}(\phi_{I_n(h),1})}(T)=c_{I_n(h)}(T^{(1)})=c_h(|\mathfrak{d}_T^{(1)}|)(\mathfrak{f}_T^{(1)})^{k-n/2-1/2}\prod_p \widetilde F^{(1)}(T,\beta_p).$$
We note that 
$$  (\mathfrak{f}_T^{(1)})^{k-n/2-1/2}=|\mathfrak{d}_T^{(1)}|^{-(k/2-n/4-1/4)} (\det T)^{(k/2-n/4-1/4)}2^{-(n-2)(k/2-n/4-1/4)} $$
for
$T \in {({\mathcal L}_{n-1})}_{>0}.$ 
We also note that  
$$\sum_{T' \in {\mathcal G}(T)} {1 \over e(T')}=\kappa_{n-1} 2^{3-n-\delta_{2,n}}\det T^{n/2} \prod_p \alpha_p(A)^{-1}$$
for $T \in {S_{n-1}({\bf Z})}_{>0}$, where ${\mathcal G}(T)$ denotes the set of $SL_{n-1}({\bf Z})$-equivalence classes belonging to the genus of $T$
(cf. Theorem 6.8.1 in \cite{Ki2}
). Hence we have
$$\sum_{T' \in {\mathcal G}(T)} {c_{\sigma_{n-1}(\phi_{I_n(h),1})}(T) \over e(T')}=\kappa_{n-1} 2^{3-n-\delta_{2,n} -(n-2)(k/2-n/4-1/4)}$$
$$ \times \det T^{k/2+n/4-1/4} |\mathfrak{d}_T^{(1)}|^{-k/2+n/4+1/4}\prod_p {\widetilde F_p^{(1)}(T,\beta_p) \over  \alpha_p(T)}.$$
Thus, by using the same method as in Proposition 2.2 of \cite{I-S}, similarly to Theorem 3.3 (1) of \cite{I-K1}, and Theorem 3.2 of \cite{I-K2},  we obtain
 \begin{eqnarray*}
\lefteqn
{L(s,\sigma_{n-1}(\phi_{I_n(h),1}))
} \\[2mm]
&=&\kappa_{n-1}2^{-(k/2-n/4-1/4)(n-2)+2-n-\delta_{2,n}}
\sum_{d_0 \in {\mathcal F}^{((-1)^{n/2})} }c_h(|d_0|)|d_0|^{n/4-k/2+1/4} \\
&\times&\hspace*{-2.5mm} 
\left\{ 2^{(-s+k/2+n/4-1/4)(n-2)}) \prod_p  \kappa_p(d_0,n-1,0)  P_{n-1,p}^{(1)}(d_0,\iota_p,\beta_p,p^{-s+k/2+n/4-1/4}) \right. \\
&+ & \left. 2^{(-s+k/2+n/4-1/4)(n-2)}  \prod_p \kappa_p(d_0,n-1,1) P_{n-1,p}^{(1)}(d_0,\varepsilon_p,\beta_p,p^{-s+k/2+n/4-1/4})\right\}. 
\end{eqnarray*}
We note that $\displaystyle \prod_p((-1)^{n/2},(-1)^{n/2}d_0)_p=1.$ Hence
$\prod_p  \kappa_p(d_0,n-1,0) =2^{-(n-2)(n-1)/2},$ and $\prod_p  \kappa_p(d_0,n-1,0) =(-1)^{n(n-2)/8}|d_0|^{-n/2+1} 2^{-(n-2)(n-1)/2}.$
This proves the assertion. 
\end{proof}

\section{Formal power series associated with local Siegel series}

Throughout this section we fix a positive even integer $n.$ 
We also simply write $\nu_p$ as $\nu$ and the others if the prime number $p$ is clear from the context.
In this section,  we give an explicit formula for $P_{n-1}^{(1)}(d_0,\omega,X,t)$ with $\omega=\varepsilon^l \ (l=0,\,1)$ to prove Theorem 3.2 (cf. Theorem 4.4.1). 
The idea of the proof is to express the power series in question as a sum of certain subseries (cf.  Proposition 4.4.3). 
Henceforth, for a $GL_m({\bf Z}_p)$-stable subset ${\mathcal B}$ of $S_m({\bf Q}_p),$ we simply write  $\sum_{T \in {\mathcal B}}$ instead of $\sum_{T \in {\mathcal B}/\sim}$ if there is no fear of confusion. 

\subsection{Local densities}

\noindent
{ }

\bigskip

Put ${\mathcal D}_{m,i}=GL_m({\bf Z}_p) \mattwo(1_{m-i};0;0;p1_i) GL_m({\bf Z}_p)$. For two elements $S$ and $T$ of $S_m({\bf Z}_p)^{\times}$  and an nonnegative 
integer $i \le m,$ we define $\alpha_p(S,T,i)$ as 
$$\alpha_p(S,T,i)=2^{-1}\lim_{e \to \infty}p^{-e(m-1)m/2} {\mathcal A}_e(S,T,i),$$
where
$${\mathcal A}_e(S,T,i)=\{ \overline {X} \in {\mathcal A}_e(S,T) \, | \, X \in {\mathcal D}_{m,i} \}.$$

\begin{lems}
Let $S$ and $T$ be elements of $S_m({\bf Z}_p)^{\times}.$ \\
{\rm (1)} Let $\Omega(S,T)=\{W \in M_m({\bf Z}_p)^{\times} \, | \, S[W] \sim T \},$ and $\Omega(S,T,i)= \Omega(S,T) \cap  {\mathcal D}_{m,i}. $ Then 
$${\alpha_p(S,T) \over \alpha_p(T)}=\#(\Omega(S,T)/GL_m({\bf Z}_p))p^{-m(\nu(\det T)- \nu(\det S))/2},$$
and
$${\alpha_p(S,T,i) \over \alpha_p(T)}=\#(\Omega(S,T,i)/GL_m({\bf Z}_p))p^{-m(\nu(\det T)- \nu(\det S))/2} .$$
\noindent
{\rm (2)} Let $\widetilde \Omega(S,T)=\{W \in M_m({\bf Z}_p)^{\times} \, | \, S \sim T[W^{-1}] \},$ and $\widetilde \Omega(S,T,i)= \widetilde \Omega(S,T) \cap {\mathcal D}_{m,i}.$ Then   
$${\alpha_p(S,T) \over \alpha_p(S)}=\#(GL_m({\bf Z}_p) \backslash \widetilde \Omega(S,T))p^{(\nu(\det T)- \nu(\det S))/2},$$
and 
$${\alpha_p(S,T,i) \over \alpha_p(S)}=\#(GL_m({\bf Z}_p) \backslash \widetilde \Omega(S,T,i))p^{(\nu(\det T)- \nu(\det S))/2}.$$
\end{lems}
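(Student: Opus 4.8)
The plan is to reduce everything to the elementary smith-normal-form description of the double cosets $\mathcal{D}_{m,i}$ together with the multiplicative behaviour of local densities under the action of $GL_m(\mathbf{Z}_p)$. First I would recall the standard fact (going back to Kitaoka \cite{Ki2}) that for $S,T\in S_m(\mathbf{Z}_p)^\times$ one has $\alpha_p(S,T)=\alpha_p(T)\cdot\#(\Omega(S,T)/GL_m(\mathbf{Z}_p))\cdot p^{-m(\nu(\det T)-\nu(\det S))/2}$; indeed, the set $\mathcal{A}_e(S,T)$ of approximate solutions $X$ decomposes, for $e$ large, according to the $GL_m(\mathbf{Z}_p)$-orbit of the ``integral part'' of $X$, i.e. according to which $W\in\Omega(S,T)$ (mod $GL_m(\mathbf{Z}_p)$) it refines, and the fibre over each such $W$ contributes a factor computing $\alpha_p(T)$ after the normalising power of $p$ is inserted to account for the change of determinant (the exponent $-m(\nu(\det T)-\nu(\det S))/2$ is exactly $\tfrac12\nu(\det(S)/\det(T))\cdot m$, coming from $\det(W)^2 = \det T/\det S$ up to units). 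This gives the first displayed identity of (1).

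For the second identity of (1) I would simply observe that the argument above is entirely compatible with restricting $X$ to lie in the fixed double coset $\mathcal{D}_{m,i}$: the orbit decomposition of $\mathcal{A}_e(S,T,i)=\{\overline X\in\mathcal{A}_e(S,T)\mid X\in\mathcal{D}_{m,i}\}$ now runs over $W\in\Omega(S,T,i)=\Omega(S,T)\cap\mathcal{D}_{m,i}$ modulo $GL_m(\mathbf{Z}_p)$ (note $\mathcal{D}_{m,i}$ is right $GL_m(\mathbf{Z}_p)$-stable, so the intersection is a union of such cosets and the quotient makes sense), while the local fibre over each $W$ is unchanged and still evaluates to $\alpha_p(T)$ times the same normalising factor. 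Hence the second formula follows from the first by replacing $\Omega(S,T)$ with $\Omega(S,T,i)$ verbatim.

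For part (2) I would dualise: writing $W\mapsto W^{-1}$ (in $GL_m(\mathbf{Q}_p)$) interchanges the conditions $S[W]\sim T$ and $S\sim T[W^{-1}]$, so $\widetilde\Omega(S,T)$ is obtained from $\Omega(S,T)$ by $W\leftrightarrow W^{-1}$, which also swaps the roles of $S$ and $T$ and of left/right $GL_m(\mathbf{Z}_p)$-cosets, turning $\#(\Omega/GL_m(\mathbf{Z}_p))$ into $\#(GL_m(\mathbf{Z}_p)\backslash\widetilde\Omega)$ and flipping the sign of $\nu(\det T)-\nu(\det S)$ in the exponent and replacing $\alpha_p(T)$ by $\alpha_p(S)$. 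Concretely one can also get (2) directly from (1) applied with $S$ and $T$ interchanged, using the symmetry $\alpha_p(S,T)$ versus $\alpha_p(T,S)$ and the bijection $\Omega(T,S)/GL_m(\mathbf{Z}_p)\xrightarrow{\ W\mapsto W^{-1}\ }GL_m(\mathbf{Z}_p)\backslash\widetilde\Omega(S,T)$, which restricts to a bijection $\Omega(T,S,i)/GL_m(\mathbf{Z}_p)\to GL_m(\mathbf{Z}_p)\backslash\widetilde\Omega(S,T,i)$ since $\mathcal{D}_{m,i}$ is stable under $W\mapsto W^{-1}$ (its Smith normal form $1_{m-i}\bot p1_i$ is self-inverse up to scaling by $p$ and up to $GL_m(\mathbf{Z}_p)$ on both sides). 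The exponent bookkeeping is the only place demanding care: one must check that $\tfrac12 m\,\nu(\det(T)/\det(S))$ from (1) becomes $\tfrac12\,\nu(\det(T)/\det(S))$ in (2) after the inversion, which is immediate once one notes that the normalisation in the definition of $\alpha_p(S,T,i)$ carries the weight $p^{-e(m-1)m/2}$ rather than the full $p^{-em^2/2}$, accounting precisely for the factor $p^{m/2}$ versus $p^{-1/2\cdot m}$ discrepancy between the two displays.

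The main obstacle I expect is purely a matter of normalising constants and the $2^{-1}$ (resp. $2^{-\delta_{m,l}}$) factors in the definitions of $\alpha_p$ and $\alpha_p(S,T,i)$: one has to be scrupulous that the ratios $\alpha_p(S,T)/\alpha_p(T)$ and $\alpha_p(S,T,i)/\alpha_p(T)$ are taken so that these powers of $2$ cancel identically (they do, since both numerator and denominator are ``square'' local densities $\alpha_p(A,A)$-type or carry the same $\delta$), leaving a clean statement with no extraneous $2$-power. Everything else — the orbit decomposition, the self-inverseness of $\mathcal{D}_{m,i}$, the determinant exponent — is routine.
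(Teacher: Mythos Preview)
Your treatment of part (1) is sound and matches the paper's approach, which simply cites Lemma~2.2 of B\"ocherer--Sato \cite{B-S}; the orbit decomposition you sketch is exactly what underlies that lemma, and the refinement to $\mathcal D_{m,i}$ goes through as you say.

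Part (2), however, has a genuine gap: the dualising map $W\mapsto W^{-1}$ does not do what you claim. The sets $\Omega(S,T)$, $\Omega(T,S)$, and $\widetilde\Omega(S,T)$ all lie inside $M_m(\mathbf Z_p)^\times$, and for $W\in M_m(\mathbf Z_p)^\times\setminus GL_m(\mathbf Z_p)$ the inverse $W^{-1}$ is \emph{not} integral. In particular $\mathcal D_{m,i}$ is not stable under inversion: $(1_{m-i}\bot p1_i)^{-1}=1_{m-i}\bot p^{-1}1_i\notin M_m(\mathbf Z_p)$, and even after scaling by $p$ you land in $\mathcal D_{m,m-i}$, not $\mathcal D_{m,i}$. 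Nor is there any symmetry $\alpha_p(S,T)\leftrightarrow\alpha_p(T,S)$: when $\nu(\det T)>\nu(\det S)$ one has $\Omega(T,S)=\emptyset$ (since $T[W]\sim S$ forces $2\nu(\det W)=\nu(\det S)-\nu(\det T)<0$), so (1) with $S,T$ swapped is the vacuous identity $0=0$. The discrepancy in exponents you noticed, $-m/2$ in (1) versus $+1/2$ in (2), already shows that the two cardinalities $\#(\Omega(S,T)/GL_m)$ and $\#(GL_m\backslash\widetilde\Omega(S,T))$ differ by the factor $\dfrac{\alpha_p(S)}{\alpha_p(T)}\,p^{(m+1)(\nu(\det T)-\nu(\det S))/2}$, which is not $1$ in general; so (2) cannot be a formal rewriting of (1).

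The paper's route to (2) is different and is what you need: it invokes the primitive-density decomposition (Proposition~2.2 of \cite{Kat1})
\[
\alpha_p(S,T)=\sum_{W\in GL_m(\mathbf Z_p)\backslash M_m(\mathbf Z_p)^\times,\ T[W^{-1}]\in S_m(\mathbf Z_p)} \beta_p\bigl(S,T[W^{-1}]\bigr)\,p^{\nu(\det W)},
\]
and then observes that in the square case $m=l$ one has $\beta_p(S,T')=\alpha_p(S)$ if $S\sim T'$ and $\beta_p(S,T')=0$ otherwise. The surviving terms are exactly those with $W\in\widetilde\Omega(S,T)$, and for each such $W$ one has $\nu(\det W)=\tfrac12(\nu(\det T)-\nu(\det S))$, giving the stated formula directly. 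The $\mathcal D_{m,i}$-version follows by restricting the decomposition to $W\in\mathcal D_{m,i}$.
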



\begin{proof} The assertion (1) follows from Lemma 2.2 of \cite{B-S}. Now  by Proposition 2.2 of \cite{Kat1} we have
$$\alpha_p(S,T)=\sum_{W \in GL_m({\bf Z}_p) \backslash \widetilde \Omega(S,T)} \beta_p(S,T[W^{-1}])p^{\nu(\det W)}.$$
Then  
$\beta_p(S,T[W^{-1}])=\alpha_p(S)$ or $0$ according as $S \sim T[W^{-1}]$ or not.
Thus the assertion (2) holds.
\end{proof}

A nondegenerate ${m \times m}$ matrix $D=(d_{ij})$ with entries in ${\bf Z}_p$ is said to be {\it reduced} if $D$ satisfies the following two conditions:\begin{enumerate}
\item[(a)] For 
$i=j$, $d_{ii}=p^{e_{i}}$ with a nonnegative integer $e_i$; \vspace*{1mm}
 
\item[(b)] For 
$i\ne j$, $d_{ij}$ is a nonnegative integer satisfying 
$ d_{ij} \le p^{e_j}-1$ if $i <j$ and $d_{ij}=0$ if $i >j$. 
\end{enumerate}
It is well known that  we can take the set of all reduced matrices as a  complete set of representatives of $GL_m({\bf Z}_p) \backslash M_m({\bf Z}_p)^{\times}.$  Let $j=0$ or $1$ according as $m$ is even or odd.   For $B \in {\mathcal L}_{m,p}^{(j)}$ put
$$\widetilde \Omega^{(j)}(B)=\{W \in M_m({\bf Z}_p)^{\times} \, | \, B[W^{-1}] \in {\mathcal L}_{m,p}^{(j)} \}.$$
Furthermore put $\widetilde \Omega^{(j)}(B,i)=\widetilde \Omega^{(j)}(B) \cap {\mathcal D}_{m.i}.$ Let $n_0 \le m,$ and $\psi_{n_0,m}$ be the mapping from $GL_{n_0}({\bf Q}_p)$ into $GL_{m}({\bf Q}_p)$ defined by $\psi_{n_0,m}(D)=1_{m-n_0} \bot D.$ 

\bigskip

\begin{lems}
{\rm (1)} Suppose that $p \not= 2.$ Let $\Theta \in GL_{n_0}({\bf Z}_p) \cap S_{n_0}({\bf Z}_p),$ and  $B_1 \in S_{m-n_0}({\bf Z}_p)^{\times}.$ 
\begin{enumerate}
\item[{\rm (1.1)}] Let $n_0$ be even.
Then  $\psi_{m-n_0,m}$ induces a bijection  
\[GL_{m-n_0}({\bf Z}_p) \backslash \widetilde \Omega^{(j)}(B_1)\simeq 
GL_{m}({\bf Z}_p) \backslash \widetilde \Omega^{(j)}(\Theta \bot B_1),\]
where $j=0$ or $1$ according as $m$ is even or odd. In particular, we have 
\[GL_{m-n_0}({\bf Z}_p) \backslash \widetilde \Omega^{(j)}(pB_1)\simeq 
GL_{m}({\bf Z}_p) \backslash \widetilde \Omega^{(j)}(\Theta \bot pB_1).\]

\item[{\rm (1.2)}] Let $n_0$ be odd.
Then $\psi_{m-n_0,m}$ induces a bijection 
\[GL_{m-n_0}({\bf Z}_p)\backslash \widetilde \Omega^{(j')} (B_1) 
\simeq GL_{m}({\bf Z}_p) \backslash \widetilde \Omega^{(j)}(\Theta \bot B_1),\]
where $j=0$ or $1$ according as $m$ is  even or odd, and $j'=1$ or $0$ according as $m$ is even or odd.
In particular, we have 
\[GL_{m-n_0}({\bf Z}_p)\backslash \widetilde \Omega^{(j')} (pB_1) 
\simeq GL_{m}({\bf Z}_p) \backslash \widetilde \Omega^{(j)}(\Theta \bot pB_1),\]
\end{enumerate}
\noindent
{\rm (2)} Suppose that $p=2.$ Let $m$ be a positive integer, $n_0$  
an even integer not greater than $m,$ and  $\Theta \in GL_{n_0}({\bf Z}_2) \cap S_{n_0}({\bf Z}_2)_e.$ 
\begin{enumerate}
\item[{\rm (2.1)}] Let   $B_1 \in S_{m-n_0}({\bf Z}_2)^{\times}.$ Then $\psi_{m-n_0,m}$ induces a bijection 
\[\hspace*{5mm}GL_{m-n_0}({\bf Z}_2) \backslash \widetilde \Omega^{(j)}(2^{j+1}B_1)\simeq
GL_{m}({\bf Z}_2) \backslash \widetilde \Omega^{(j)}(2^{j}\Theta \bot 2^{j+1}B_1),\] where $j=0$ or $1$ according as $m$ is even or odd.

\item[{\rm (2.2)}] Suppose that $m$ is even. Let $a \in {\bf Z}_2$ such that $a \equiv -1 \ {\rm mod} \ 4,$ and $B_1 \in S_{m-n_0-2}({\bf Z}_2)^{\times}.$  Then $\psi_{m-n_0-1,m}$ induces  a bijection  \begin{eqnarray*}
\lefteqn{GL_{m-n_0-1}({\bf Z}_2) \backslash \widetilde \Omega^{(1)}(a \bot 4B_1)} \\
&\simeq%
GL_{m}({\bf Z}_2) \backslash \widetilde \Omega^{(0)}(\Theta \bot  \smallmattwo(2;1;1;{1+a \over 2}) \bot  2B_1).
\end{eqnarray*}
\item[{\rm (2.3)}] Suppose that $m$ is even, and let  $B_1 \in S_{m-n_0-1}({\bf Z}_2)^{\times}.$  Then there exists a bijection  $\widetilde \psi_{m-n_0-1,m}$ 
\[GL_{m-n_0-1}({\bf Z}_2) \backslash \widetilde \Omega^{(1)}(4B_1)\simeq
GL_{m}({\bf Z}_2) \backslash \widetilde \Omega^{(0)}(\Theta \bot 2 \bot 2B_1).\]
\end{enumerate}
\hspace*{10mm}
{\rm (}As will be seen later, $\widetilde \psi_{m-n_0-1,m}$ is not induced from $\psi_{m-n_0-1,m}.${\rm )} \vspace*{2mm}

\noindent
{\rm (3)} The assertions {\rm (1),\,(2)} remain valid if one replaces $\widetilde \Omega^{(j)}(B)$ by $\widetilde \Omega^{(j)}(B,i).$ 
\end{lems}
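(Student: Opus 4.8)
The plan is to establish all the asserted bijections by the same general mechanism: exhibit $\psi_{n_0,m}$ (resp. $\widetilde\psi$) as a well-defined map between the relevant double coset sets, then produce a two-sided inverse. First I would reduce to the ``block form'' of elements of $M_m(\mathbf Z_p)^\times$: every $W\in\widetilde\Omega^{(j)}(\Theta\bot B_1)$ may, after left multiplication by $GL_m(\mathbf Z_p)$, be brought into the shape $\left(\begin{smallmatrix} U & * \\ O & W_1\end{smallmatrix}\right)$ with $U\in GL_{n_0}(\mathbf Z_p)$ and $W_1\in M_{m-n_0}(\mathbf Z_p)^\times$; this uses that $\Theta$ is unimodular (case (1) and (2.1)), so the upper-left block of $(\Theta\bot B_1)[W^{-1}]$ being in $\mathbf Z_p$-integral (half-)integral form forces the $p$-valuations on the first $n_0$ columns to behave. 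The crucial point is that $(\Theta\bot B_1)[W^{-1}]\in\mathcal L_{m,p}^{(j)}$ if and only if $B_1[W_1^{-1}]\in\mathcal L_{m-n_0,p}^{(j')}$, where the shift $j\mapsto j'$ comes from tracking the determinant parity: $\nu_p(\det(\Theta\bot B_1))\equiv\nu_p(\det B_1)$ when $\Theta$ is unimodular, but the discriminant character (equivalently, the superscript distinguishing $\mathcal L^{(0)}$ from $\mathcal L^{(1)}$) gets multiplied by $\chi_p(\det\Theta)$, which accounts for the even/odd dichotomy of $n_0$ in (1.1) vs. (1.2). Once the equivalence of membership conditions is in place, $W\mapsto W_1$ descends to a well-defined map on $GL$-orbits, and $W_1\mapsto 1_{n_0}\bot W_1$ is an inverse up to $GL_m(\mathbf Z_p)$, giving the bijection.

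For part (2), the same strategy applies but one must work with $S_m(\mathbf Z_2)_e$ versus $\mathcal L_{m,2}'$ and the scaling factors $2^j,2^{j+1}$; the bookkeeping is finer because at $p=2$ the lattice $\mathcal L_{m,2}'$ is a proper sublattice of $\mathcal L_{m,2}=S_m(\mathbf Z_2)$ and the condition ``$A\equiv -{}^trr\bmod 4\mathcal L_{m,2}$'' must be propagated through the block decomposition. For (2.2) and (2.3) the map is $\psi_{m-n_0-1,m}$ (resp.\ the exceptional $\widetilde\psi_{m-n_0-1,m}$): here the source has a rank-one ``defect'' coming from the anisotropic binary block $\smallmattwo(2;1;1;{1+a\over 2})$ or $\smallmattwo(2;0;0;*)$-type piece, and one checks — exactly as in Lemma 4.1(2.2) — that representing $4B_1$ by $a\bot 4B_1$ over $\mathbf Z_2$ is equivalent to representing $2B_1$ (with the binary block adjoined) over the larger lattice. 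The inverse map in (2.3) is \emph{not} simply adjoining $1$'s, which is why the lemma names it $\widetilde\psi$ separately; I would construct it explicitly by sending $W_1$ to a block matrix that incorporates the change-of-basis realizing $\smallmattwo(2;1;1;{(1+a)/2})\sim a\bot 4$ over $\mathbf Z_2$ (for a suitable $a$), padded by $\Theta$.

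Finally, part (3) is a formal addendum: the constructions above all respect the elementary-divisor type, i.e.\ if $W\in\mathcal D_{m,i}$ then the corresponding $W_1$ lies in $\mathcal D_{m-n_0,i}$ (resp.\ $\mathcal D_{m-n_0-1,i}$) and conversely, because $\psi_{n_0,m}$ and its relatives send $\mathcal D_{\bullet,i}$ into $\mathcal D_{\bullet,i}$ — the adjoined unimodular block $1_{n_0}$ (or $\Theta$) contributes trivial elementary divisors. So intersecting everything with $\mathcal D_{\bullet,i}$ on both sides preserves the bijections verbatim. The main obstacle I anticipate is the $p=2$ case (2.2)/(2.3): getting the two-sided inverse of $\widetilde\psi$ exactly right requires a careful normal-form analysis of how a unit $a\equiv-1\bmod 4$ in the source corresponds to the fixed binary form $\smallmattwo(2;1;1;{(1+a)/2})$ in the target, and ensuring this is compatible with the $\mathcal L'$-condition mod $4$ rather than merely with integrality; the odd-$p$ cases and (2.1) should be routine by comparison.
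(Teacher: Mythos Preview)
Your block-triangularisation strategy for (1), (2.1), and (2.2) is exactly what the paper does: pick a reduced representative $D$ of a class in $GL_m({\bf Z}_p)\backslash\widetilde\Omega^{(j)}(\Theta\bot\cdots)$, use unimodularity of $\Theta$ to force the top-left block of $D$ to be $1_{n_0}$, and read off $D_1$ in the bottom-right. One small correction: the switch $j\leftrightarrow j'$ in (1.1)/(1.2) has nothing to do with $\chi_p(\det\Theta)$. The superscript $j$ simply records the parity of the ambient degree (recall ${\mathcal L}_{m,p}^{(0)}=S_m({\bf Z}_p)_e^\times$ and ${\mathcal L}_{m,p}^{(1)}=({\mathcal L}_{m,p}')^\times$, which for $p\ne 2$ coincide anyway), so removing an $n_0$-block flips $j$ exactly when $n_0$ is odd. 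This is harmless for $p\ne 2$ but worth stating correctly.

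The real issue is your treatment of (2.3). You describe $\widetilde\psi_{m-n_0-1,m}$ as built from a fixed change-of-basis realising $\smallmattwo(2;1;1;{(1+a)/2})\sim a\bot 4$, but that binary form belongs to (2.2), not (2.3); in (2.3) the target is $\Theta\bot 2\bot 2B_1$ with a one-by-one block $2$, and no $a$ enters. The paper's construction (after reducing to $n_0=0$ via (2.1)) is genuinely $D$-dependent: given $D\in\widetilde\Omega^{(1)}(4B_1)$, write $4B_1[D^{-1}]=-{}^tr_0r_0+4B'$ with $r_0\in{\bf Z}_2^{m-1}$ and $B'\in{\mathcal L}_{m-1,2}$, choose $r\in{\bf Z}_2^{m-1}$ so that $4\,{}^tD^{-1}\,{}^trrD^{-1}\equiv{}^tr_0r_0\bmod 4{\mathcal L}_{m-1,2}$ (equivalently $2rD^{-1}\equiv r_0\bmod 2$), and set $\widetilde\psi(D)=\smallmattwo(1;r;0;D)$. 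One then checks $(2\bot 2B_1)[\widetilde D^{-1}]\in S_m({\bf Z}_2)_e$, and that the class of $\widetilde D$ in $GL_m({\bf Z}_2)\backslash\widetilde\Omega^{(0)}(2\bot 2B_1)$ is independent of the choice of $r$. No fixed change-of-basis will do this job, because the off-diagonal entry $r$ has to cancel the particular $r_0$ arising from the ${\mathcal L}'$-condition on $4B_1[D^{-1}]$. Once you have this construction, the inverse is the obvious projection $\smallmattwo(1;r;0;D)\mapsto D$, and (3) follows since $\smallmattwo(1;r;0;D)$ and $D$ have the same elementary divisors.
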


\begin{proof} (1) Clearly 
$\psi_{m-n_0,m}$ induces an injection from 
$GL_{m-n_0}({\bf Z}_p) \backslash \widetilde \Omega^{(j)}(B_1)$ to $GL_{m}({\bf Z}_p) \backslash \widetilde \Omega^{(j)}(\Theta \bot B_1),$ which is denoted also by the same symbol $\psi_{m-n_0,m}.$ To prove the surjectivity  of $\psi_{m-n_0,m},$ take a representative $D$ of an element of $GL_{m}({\bf Z}_p) \backslash \widetilde \Omega^{(j)}(\Theta \bot B_1).$ Without loss of generality we may suppose that $D$ is a reduced matrix.
Since  $(\Theta \bot B_1)[D^{-1}]  \in  S_m({\bf Z}_p),$ we have 
$D=\mattwo(1_{n_0};0;0;D_1)$ with $D_1 \in \widetilde \Omega^{(j)}(B_1).$ This proves the assertion (1.1). The assertion (1.2) can be proved in the same way as above. 

(2) First we prove (2.1). As in (1), the mapping $\psi_{m-n_0,m}$ induces an injection from  $GL_{m-n_0}({\bf Z}_2) \backslash \widetilde \Omega^{(j)}(2^{j+1}B_1)$ to $GL_{m}({\bf Z}_2) \backslash \widetilde \Omega^{(j)}(2^{j}\Theta \bot 2^{j+1}B_1),$ which is denoted also by the same symbol $\psi_{m-n_0,m}.$ Then the surjectivity of $\psi_{m-n_0,m}$ in case $j=0$ can be proved in the same manner as (1). To prove the surjectivity of $\psi_{m-n_0,m}$ in case $j=1,$ take a reduced matrix  $D=\mattwo(D_1;D_{12};0;D_2)$ with $D_1 \in M_{n_0}({\bf Z}_2)^{\times}, D_2 \in M_{m-n_0}({\bf Z}_2)^{\times},D_{12} \in M_{n_0,m-n_0}({\bf Z}_2).$ If 
$(2\Theta \bot 4B_1)[D^{-1}] \in {\mathcal L}_{m,2}^{(1)},$ then there exists an element $(r_1,r_2) \in {\bf Z}_2^{n_0} \times {\bf Z}_2^{m-n_0}$ such that
$$2\Theta[D_1^{-1}] \equiv  -{}^tr_1r_1 \ {\rm mod} \ 4{\mathcal L}_{n_0,2}, $$
$$-2\Theta[D_1^{-1}]D_{12}D_2^{-1} \equiv  - {}^tr_2r_1 \ {\rm mod} \ 2M_{n_0,m-n_0}({\bf Z}_2),$$
and 
$$2\Theta[D_1^{-1}D_{12}D_2^{-1}]+4B_1[D_2^{-1}] \equiv  -{}^tr_2r_2 \ {\rm mod} \ 4{\mathcal L}_{m-n_0,2}.$$
We have $\nu(\det (2\Theta[D_1^{-1}])) \ge n_0,$ and $\nu(2\Theta)=n_0.$ Hence we have  $D_1=1_{n_0},$ and $r_1 \equiv 0 \ {\rm mod} \ 2.$ 
Hence  $4B_1[D_2^{-1}]  \in {\mathcal L}_{m-n_0}^{(1)},$ and $D_{12}D_2^{-1} \in M_{n_0,m-n_0}({\bf Z}_2).$ Hence $D=U\mattwo(1_{n_0};0;0;D_2)$ with $U \in GL_m({\bf Z}_p).$ Thus the surjectivity of $\psi_{m-n_0,m}$ can be proved in the same way as above. The assertion (2.2) can be proved in the same way as above.

To prove (2.3),  we may suppose that $n_0=0$ in view of (2.1). Let $D \in \widetilde \Omega^{(1)}(4B_1).$ Then  
$$4B_1[D^{-1}]= -\ ^tr_0 r_0 +4B'$$
with $r_0 \in {\bf Z}_2^{m-1}$ and $B' \in {\mathcal L}_{m-1,2}.$ Then we can take  $r \in {\bf Z}_2^{m-1}$ such that 
$$4 \ ^tD^{-1} \ ^tr r D^{-1} \equiv \ ^tr_0 r_0 \ {\rm mod} \ 4{\mathcal L}_{m-1,2}.$$
Furthermore, $2r D^{-1}$ is uniquely determined modulo $2{\bf Z}_2^{m-1}$ by $r_0.$ Put $\widetilde D=\mattwo(1;r;0;D).$ Then $\widetilde D$ belongs to $\widetilde \Omega^{(0)}(2 \bot 2B_1),$ and the mapping $D \mapsto \widetilde D$ induces a bijection in question.
\end{proof}

\begin{xcor}
Suppose that $m$ is even. Let $B \in {\mathcal L}_{m-1,p}^{(1)},$ and $$B^{(1)}=
\mattwo(1;r_B/2;{}^tr_B/2;{(B+{}^tr_B r_B)/4})$$ with $r_B \in {\bf Z}_p^{m-1}$ as defined in Section 3. 
Then there exists a bijection 
\[ \psi : GL_{m-1}({\bf Z}_p) \backslash \widetilde \Omega^{(1)}(B) \simeq 
GL_{m}({\bf Z}_p) \backslash \widetilde \Omega^{(0)}(2^{\delta_{2,p}}B^{(1)})
\] 
such that $\nu(\det (\psi(W)))=\nu(\det (W))$ for any $W \in GL_{m-1}({\bf Z}_p) \backslash \widetilde \Omega^{(1)}(B).$ Moreover, this induces a bijection $\psi_i$ from $ GL_{m-1}({\bf Z}_p) \backslash \widetilde \Omega^{(1)}(B,i)$ to \\
$GL_{m}({\bf Z}_p) \backslash \widetilde \Omega^{(0)}(2^{\delta_{2,p}}B^{(1)},i)$ for $i=0,\cdots, m-1.$ 
\end{xcor}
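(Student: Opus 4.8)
The plan is to deduce the corollary from Lemma~3.1, which describes $B^{(1)}$ up to ${\bf Z}_p$-equivalence, together with the preceding Lemma~4.1.2, treating $p\ne 2$ and $p=2$ separately. Two preliminary remarks are used throughout. First, if $C_2=C_1[U]$ with $U\in GL_\mu({\bf Z}_p)$, then $W\mapsto WU^{-1}$ is a $GL_\mu({\bf Z}_p)$-equivariant bijection $\widetilde\Omega^{(j)}(C_1)\to\widetilde\Omega^{(j)}(C_2)$ preserving $\nu(\det(\cdot))$ and carrying each double coset ${\mathcal D}_{\mu,i}$ onto itself; hence in forming $\widetilde\Omega^{(\cdot)}(\cdot)$ we may freely replace a matrix by any ${\bf Z}_p$-equivalent one. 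Second, a direct check using $B\in{\mathcal L}'_{m-1,p}$ shows $2^{\delta_{2,p}}B^{(1)}\in S_m({\bf Z}_p)_e^{\times}={\mathcal L}_{m,p}^{(0)}$: when $p=2$, $2B^{(1)}$ has corner $2$, off-diagonal row ${}^tr_B$ (integral), and lower-right block $(B+{}^tr_Br_B)/2\in 2{\mathcal L}_{m-1,2}$, hence is integral with even diagonal; when $p\ne 2$ one has $S_m({\bf Z}_p)_e=S_m({\bf Z}_p)$ and nothing beyond integrality and nondegeneracy of $B^{(1)}$ is needed. Thus $\widetilde\Omega^{(0)}(2^{\delta_{2,p}}B^{(1)})$ is well defined.

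Suppose $p\ne 2$, so $2^{\delta_{2,p}}=1$. By Lemma~3.1(2.1), $B^{(1)}\sim_{{\bf Z}_p}\mattwo(1;0;0;B)$. Apply Lemma~4.1.2(1.2) with $n_0=1$, $\Theta=(1)\in GL_1({\bf Z}_p)\cap S_1({\bf Z}_p)$ and $B_1=B$; since $m$ is even this reads $j=0$, $j'=1$ and yields a bijection $GL_{m-1}({\bf Z}_p)\backslash\widetilde\Omega^{(1)}(B)\simeq GL_m({\bf Z}_p)\backslash\widetilde\Omega^{(0)}(1_1\bot B)$ induced by $\psi_{m-1,m}(D)=1_1\bot D$. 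Composing with the equivalence bijection of the first remark gives $\psi$; as $\det(1_1\bot D)=\det D$ and $\det U$ is a unit, $\nu(\det(\psi(W)))=\nu(\det W)$, and the refinement to $\widetilde\Omega^{(\cdot)}(\cdot,i)$ is exactly Lemma~4.1.2(3).

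Suppose $p=2$, so $2^{\delta_{2,p}}=2$; Lemma~3.1(2.2), applied with $B$ playing the role of the matrix $A$ of that lemma, splits into two subcases. If $r_B\equiv 0\bmod 2$, then $B\sim_{{\bf Z}_2}4B'$ with $B'\in{\mathcal L}_{m-1,2}$ and $B^{(1)}\sim_{{\bf Z}_2}\mattwo(1;0;0;B')$, so $2B^{(1)}\sim_{{\bf Z}_2}2\bot 2B'$; then Lemma~4.1.2(2.3) with $n_0=0$ (so $\Theta$ vacuous) and $B_1=B'$ gives a bijection $GL_{m-1}({\bf Z}_2)\backslash\widetilde\Omega^{(1)}(4B')\simeq GL_m({\bf Z}_2)\backslash\widetilde\Omega^{(0)}(2\bot 2B')$, $D\mapsto\mattwo(1;r;0;D)$. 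If $r_B\not\equiv 0\bmod 2$, then $B\sim_{{\bf Z}_2}a\bot 4B'$ with $a\equiv -1\bmod 4$, $B'\in{\mathcal L}_{m-2,2}$, and $B^{(1)}\sim_{{\bf Z}_2}\matthree(1;1/2;0;1/2;{(a+1)/4};0;0;0;B')$, so $2B^{(1)}\sim_{{\bf Z}_2}\smallmattwo(2;1;1;{1+a\over2})\bot 2B'$; here Lemma~4.1.2(2.2) with $n_0=0$, $B_1=B'$ gives the bijection $GL_{m-1}({\bf Z}_2)\backslash\widetilde\Omega^{(1)}(a\bot 4B')\simeq GL_m({\bf Z}_2)\backslash\widetilde\Omega^{(0)}(\smallmattwo(2;1;1;{1+a\over2})\bot 2B')$. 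In either subcase, composing with the equivalence bijections gives $\psi$; since $\det\mattwo(1;r;0;D)=\det D$, $\nu(\det)$ is preserved, and Lemma~4.1.2(3) supplies the refinement $\psi_i$.

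The step I expect to be the main obstacle is the $p=2$ bookkeeping. The blocks $B'$ produced by Lemma~3.1(2.2) need not be integral (they lie in ${\mathcal L}_{\cdot,2}$, not $S_\cdot({\bf Z}_2)$), so one must check that the constructions proving Lemma~4.1.2(2.2)--(2.3), which use only that $4B_1\in{\mathcal L}_{\cdot,2}^{(1)}$ and that $2\bot 2B_1$, respectively $\smallmattwo(2;1;1;{1+a\over2})\bot 2B_1$, lies in $S_\cdot({\bf Z}_2)_e^{\times}$, remain valid for $B_1\in{\mathcal L}_{\cdot,2}$. One must also keep straight which superscript is attached to each $\widetilde\Omega$: the source $B$ has odd degree $m-1$ (superscript $1$) while $B^{(1)}$ has even degree $m$ (superscript $0$), and this must be reconciled with the $j,j'$ conventions of Lemma~4.1.2 and with the fact that passing from $W$ to $\mattwo(1;\ast;0;W)$ adds exactly one elementary divisor $1$ to the Smith form, so that ${\mathcal D}_{m-1,i}$ corresponds to ${\mathcal D}_{m,i}$ and $i$ ranges only over $0,\dots,m-1$.
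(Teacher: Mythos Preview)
Your argument is correct and follows the paper's own proof essentially verbatim: both reduce to Lemma~4.1.2(1.2) for $p\ne 2$, and to (2.3) or (2.2) for $p=2$ according as $r_B\equiv 0$ or $\not\equiv 0\bmod 2$, after normalizing $B$ and $B^{(1)}$ via Lemma~3.1. Your closing caveat about $B'\in{\mathcal L}_{\cdot,2}$ rather than $S_\cdot({\bf Z}_2)$ is a fair observation, but the paper glosses over the same point (it writes ``$B_1\in{\mathcal L}_{m-2,2}^{\times}$'' when invoking (2.2)) and, as you note, the actual construction in the proof of (2.2)--(2.3) only uses $4B_1\in{\mathcal L}_{\cdot,2}^{(1)}$ and $2B_1\in S_\cdot({\bf Z}_2)_e$, both of which hold for half-integral $B_1$.
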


\begin{proof} Let $p \not=2.$ Then we may suppose $r_B=0,$ and the assertion follows from (1.2). Let $p=2.$ If $r_B \equiv 0 \ {\rm mod} \ 2$ we may suppose that $r_B=0,$ and the assertion follows from (2.3). If $r_B \not\equiv 0 \ {\rm mod} \ 2,$ we may suppose that $B=a \bot 4B_1$ with $B_1 \in {\mathcal L}_{m-2,2}^{\times}$ and $r_B=(1,0,\hdots,0).$ Thus the assertion follows from (2.2).
\end{proof}

\begin{lems}
Suppose that $p \not=2.$ \\
{\rm (1)}
 Let $B \in S_{m}({\bf Z}_p)^{\times}.$ Then 
$$\alpha_p(p^r d B)=p^{rm(m+1)/2}\alpha_p(B)$$
for any nonnegative integer $r$ and $d \in {\bf Z}_p^*.$ \\
\noindent
{\rm (2)}
  Let $U_1 \in GL_{n_0}({\bf Z}_p) \cap S_{n_0}({\bf Z}_p)$ and $B_1 \in S_{m-n_0}({\bf Z}_p)^{\times}.$ Then  
\begin{eqnarray*}
\lefteqn{
\alpha_p(pB_1 \bot U_1)= 2^{r(n_0)}\,\alpha_p(pB_1)
} \\
&\times& \hspace*{-2mm}
\left\{\begin{array}{ll}
{
\prod_{i=1}^{n_0/2} (1-p^{-2i})(1+\chi((-1)^{n_0/2} \det U_1)p^{-n_0/2})^{-1}}
&{ \ {\rm if} \ n_0 \ {\rm even}}, \\[2mm]
{
\prod_{i=1}^{(n_0-1)/2} (1-p^{-2i})}
& { \ {\rm if } \ n_0 \ {\rm odd}},
\end{array}\right.
\end{eqnarray*}
for $n_0 \ge 1,$ where $r(n_0)=0$ or $1$ according as $n_0=m$ or not. 
\end{lems}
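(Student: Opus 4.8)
The plan is to evaluate both local densities directly from the definition, using that for $p\neq 2$ one has $S_r({\bf Z}_p)_e=S_r({\bf Z}_p)$, so that for a nondegenerate $C$ of size $r$
\[\alpha_p(C)=2^{-1}\lim_{a\to\infty}p^{-ar(r-1)/2}\,\#\{X\in M_r({\bf Z}_p)/p^aM_r({\bf Z}_p)\ |\ C[X]\equiv C\ {\rm mod}\ p^aS_r({\bf Z}_p)\}.\]

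For (1) it suffices, by iterating, to treat the cases $(r,d)=(0,d)$ with $d\in{\bf Z}_p^*$ and $(r,d)=(1,1)$. When $d$ is a unit the solution sets for $dB$ and for $B$ literally coincide, since $dB[X]-dB=d(B[X]-B)$ and multiplication by $d$ does not affect divisibility by $p^a$; hence $\alpha_p(dB)=\alpha_p(B)$. In the case of $pB$, the congruence $pB[X]\equiv pB\ {\rm mod}\ p^a$ is equivalent to $B[X]\equiv B\ {\rm mod}\ p^{a-1}$, which depends only on $X\ {\rm mod}\ p^{a-1}$; so the number of solutions mod $p^a$ is $p^{m^2}$ times the number mod $p^{a-1}$, and passing to the limit with the normalizing power $p^{-am(m-1)/2}$ gives $\alpha_p(pB)=p^{m^2-m(m-1)/2}\alpha_p(B)=p^{m(m+1)/2}\alpha_p(B)$. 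Iteration finishes (1).

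For (2), write $A=U_1\bot pB_1$ and $n_1=m-n_0$ (the size of $B_1$), and first assume $n_1\geq 1$. I would split a solution $X$ into blocks $X_{11}\in M_{n_0}({\bf Z}_p)$, $X_{12}\in M_{n_0,n_1}({\bf Z}_p)$, $X_{21}\in M_{n_1,n_0}({\bf Z}_p)$, $X_{22}\in M_{n_1}({\bf Z}_p)$ (all mod $p^a$) and write $A[X]\equiv A\ {\rm mod}\ p^a$ out block by block. The $(1,1)$-block forces $X_{11}\in GL_{n_0}({\bf Z}_p)$ and, with $X_{21}$ fixed, becomes $U_1[X_{11}]\equiv U_1-p\,{}^tX_{21}B_1X_{21}\ {\rm mod}\ p^a$; since for $p\neq 2$ the perturbed form $U_1-p\,{}^tX_{21}B_1X_{21}$ has the same rank and determinant square-class as $U_1$, it is ${\bf Z}_p$-equivalent to $U_1$, so the number of admissible $X_{11}$ equals the number of solutions for $U_1$ alone, independently of $X_{21}$. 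Given $X_{11}$, the $(1,2)$-block determines $X_{12}$ uniquely mod $p^a$ and forces $X_{12}\equiv 0\ {\rm mod}\ p$. Substituting this $X_{12}$ into the $(2,2)$-block and cancelling one power of $p$ reduces it to $\widetilde B_1[X_{22}]\equiv B_1\ {\rm mod}\ p^{a-1}$, where $\widetilde B_1=B_1(1_{n_1}+pRB_1)$ with $R=U_1^{-1}[({}^tX_{11})^{-1}\,{}^tX_{21}]\in S_{n_1}({\bf Z}_p)$; as $p\neq 2$, the matrix $1_{n_1}+pRB_1$ has an integral square root $W\equiv 1_{n_1}\ {\rm mod}\ p$, and since $W$ is a power series in $RB_1$ one checks $\widetilde B_1={}^tWB_1W$, so $\widetilde B_1\sim_{{\bf Z}_p}B_1$ and the number of admissible $X_{22}$ (mod $p^a$, lifted from mod $p^{a-1}$) is $p^{n_1^2}$ times the number of solutions for $B_1$ alone, again independently of $X_{11}$ and $X_{21}$. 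As $X_{21}$ is otherwise free, it follows that, for $a$ large, the number of solutions $X\ {\rm mod}\ p^a$ of $A[X]\equiv A$ equals $p^{an_0n_1+n_1^2}$ times the product of the number of solutions for $U_1$ (mod $p^a$) and the number of solutions for $B_1$ (mod $p^{a-1}$).

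Multiplying by $2^{-1}p^{-am(m-1)/2}$ and letting $a\to\infty$ — the $a$-dependent powers of $p$ cancel — yields $\alpha_p(U_1\bot pB_1)=2\,p^{n_1(n_1+1)/2}\alpha_p(U_1)\alpha_p(B_1)=2\,\alpha_p(U_1)\alpha_p(pB_1)$ by (1); the remaining case $n_0=m$ (so $B_1$ is empty and $A=U_1$) is trivial and matches $2^{r(n_0)}=2^0$. To conclude, I would insert the classical value, for $p\neq 2$, of the local density of a unimodular form of rank $n_0$, which in the present normalization is $\prod_{i=1}^{n_0/2}(1-p^{-2i})(1+\chi((-1)^{n_0/2}\det U_1)p^{-n_0/2})^{-1}$ when $n_0$ is even and $\prod_{i=1}^{(n_0-1)/2}(1-p^{-2i})$ when $n_0$ is odd (cf.\ Kitaoka \cite{Ki2}), together with $r(n_0)=1$ when $n_0<m$; this is exactly the asserted formula. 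I expect the main obstacle to be the block-wise bookkeeping in (2): verifying that the counts of admissible $X_{11}$ and $X_{22}$ genuinely do not depend on the other blocks — which is precisely where the two ${\bf Z}_p$-equivalences enter — and keeping all powers of $p$ and factors of $2$ straight.
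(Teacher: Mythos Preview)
Your argument is correct. The paper's own proof consists only of citations: part~(1) is referred to Theorem~5.6.4\,(a) of Kitaoka \cite{Ki2}, and part~(2) to the explicit formula on p.~110 of \cite{Ki2}. You instead reconstruct these facts by a direct count.

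For part~(1) your treatment of the unit--scaling and $p$--scaling cases is the standard one and matches what lies behind Kitaoka's statement. For part~(2) your block decomposition is essentially the computation underlying the formula in \cite{Ki2}: the two key points --- that $U_1-p\,{}^tX_{21}B_1X_{21}\sim_{{\bf Z}_p}U_1$ (because for odd $p$ a unimodular symmetric matrix is determined up to equivalence by its determinant square--class, and $\det(1-pU_1^{-1}{}^tX_{21}B_1X_{21})\in 1+p{\bf Z}_p\subset({\bf Z}_p^*)^{\Box}$) and that $B_1(1+pRB_1)={}^tWB_1W$ via the $p$--adic square root $W=(1+pRB_1)^{1/2}$ --- are handled correctly, and your bookkeeping of the powers of $p$ and the factor $2^{r(n_0)}$ checks out. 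In the end you still invoke \cite{Ki2} for the closed value of $\alpha_p(U_1)$, so the two approaches converge at the last step.

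What your route buys is self--containment and transparency about \emph{why} the direct--sum density factorises with the extra $2$; what the paper's citation buys is brevity. Mathematically they are the same computation viewed from opposite ends.
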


\begin{proof} The assertion (1) follows from Theorem 5.6.4 (a) of \cite{Ki2}. 
 The assertion  (2)  follows from the formula on Page 110, line 4 from the bottom of  
Kitaoka \cite{Ki2}.  
\end{proof}


\begin{lems}
\noindent
{\rm (1)} Let $B \in S_{m}({\bf Z}_2)^{\times}.$ Then 
$$\alpha_2(2^r d B)=2^{rm(m+1)/2}\alpha_2(B)$$
for any nonnegative integer $r$ and $d \in {\bf Z}_2^*.$ 

\noindent
{\rm (2)} 
Let $n_0$ be even, and   $U_1 \in GL_{n_0}({\bf Z}_2) \cap S_{n_0}({\bf Z}_2)_e. $   
Then for $B_1 \in  S_{m-n_0}({\bf Z}_2)^{\times}$ we have 
\begin{eqnarray*}
\lefteqn{
\alpha_2(U_1 \bot 2B_1)= 2^{r(n_0)}\,\alpha_2(2B_1)
} \\
&\times& \hspace*{-2mm} 
\left\{ \begin{array}{ll}
{
\prod_{i=1}^{n_0/2} (1-2^{-2i})(1+\chi((-1)^{n_0/2} \det U_1)p^{-n_0/2})^{-1}}
&{} \\[2mm]
{}&
{\hspace*{-15mm}{\rm if} \ B_1 \in S_{m-n_0}({\bf Z}_2)_e}, \\[2mm]
{
\prod_{i=1}^{(n_0-1)/2} (1-2^{-2i})}&
{\hspace*{-15mm}{\rm if } \  B_1 \in S_{m-n_0}({\bf Z}_2)_o}
,
\end{array}
\right.
\end{eqnarray*}
  and for $u_0 \in {\bf Z}_2^*$ and $B_2 \in  S_{m-n_0-1}({\bf Z}_2)^{\times}$ we have 
  $$\alpha_2(u_0 \bot 2U_1 \bot 4B_2)= \alpha_2(2B_2)2^{m(m-1)/2+1}\prod_{i=1}^{n_0/2} (1-2^{-2i}).  $$
\noindent
{\rm (3)} Let  $u_0 \in {\bf Z}_2^*$ and $B_1 \in  S_{m-1}({\bf Z}_2)^{\times}.$ Then we have 
 $$\alpha_2(u_0 \bot  5B_1)= \alpha_2(u_0 \bot B_1).$$
\end{lems}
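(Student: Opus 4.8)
The statement to be proved is that $\alpha_2(u_0 \bot 5B_1) = \alpha_2(u_0 \bot B_1)$ for $u_0 \in {\bf Z}_2^*$ and $B_1 \in S_{m-1}({\bf Z}_2)^{\times}$. The plan is to reduce the claim to the fact that over ${\bf Z}_2$ the unary form $\langle u_0 \rangle$ already represents all of $u_0 {{\bf Z}_2^*}^{\Box} \cdot ({\bf Z}_2^*)$ in a way that absorbs the unit $5$. More precisely, I would first recall the standard principle (Kitaoka \cite{Ki2}, Chapter 5) that the local density $\alpha_p(A)$ depends only on the $GL_m({\bf Z}_p)$-equivalence class of $A$, and that two diagonal-block forms $\langle u_0 \rangle \bot B$ and $\langle u_0 \rangle \bot B'$ with $B \sim_{{\bf Z}_2} B'$ have the same local density. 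So it suffices to show $u_0 \bot 5B_1 \sim_{{\bf Z}_2} u_0 \bot B_1$.

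The heart of the matter is the following observation about binary unimodular (more generally, the unit parts of) ${\bf Z}_2$-lattices: for any $u_0 \in {\bf Z}_2^*$ there is an element of $GL_2({\bf Z}_2)$ taking $\langle u_0 \rangle \bot \langle c \rangle$ to $\langle u_0 \rangle \bot \langle 5c \rangle$, or more generally the block $\langle u_0\rangle \bot C$ with $C$ a ${\bf Z}_2$-matrix can be adjusted so that any component of $C$ is multiplied by $5$. The key is the identity $5 = 1 + 4$: since $u_0$ is a unit, the form $\langle u_0 \rangle$ represents $u_0 w^2$ for every $w \in {\bf Z}_2$, and one checks using $u_0 \equiv 1, 3, 5, 7 \bmod 8$ case by case — or, cleanly, using that $1 \in ({\bf Z}_2^*)^2$ and $5 \equiv 1 \cdot (\text{something}) $ — that multiplying any diagonal entry of $B_1$ by the specific unit $5$ produces a ${\bf Z}_2$-equivalent form once it is placed in orthogonal sum with a unit $\langle u_0\rangle$. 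I would phrase this via the diagonalisation of $B_1$ over ${\bf Z}_2$ into unary and binary blocks (the classification of ${\bf Z}_2$-quadratic forms), handle the binary blocks $\smallmattwo(2;1;1;2)$ and $\smallmattwo(0;1;1;0)$ separately (noting $5 \cdot \smallmattwo(0;1;1;0) \sim \smallmattwo(0;1;1;0)$ trivially and $5 \cdot \smallmattwo(2;1;1;2)$ is ${\bf Z}_2$-equivalent to $\smallmattwo(2;1;1;2)$ after absorbing into the unit block), and reduce the unary-block case to the elementary statement that $\langle u_0 \rangle \bot \langle c \rangle \sim \langle u_0 \rangle \bot \langle 5c \rangle$ over ${\bf Z}_2$, which follows from the fact that $5 = N(2+i)$-type norm identities make $\{1,5\}$ equivalent as square classes modulo the values represented by $\langle u_0\rangle$, i.e. $\langle u_0, c\rangle$ and $\langle u_0, 5c\rangle$ have the same Hasse invariant, discriminant, scale and norm ideals, hence are isometric by the ${\bf Z}_2$-classification.

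The step I expect to be the main obstacle is the careful bookkeeping showing $\langle u_0 \rangle \bot \langle c \rangle \sim_{{\bf Z}_2} \langle u_0 \rangle \bot \langle 5c \rangle$ uniformly in $u_0$ and $c$: one must track not just the discriminant (which changes by the square class of $5$, i.e. is unchanged since $5 \equiv 1 \bmod 4$ but $5 \notin ({\bf Z}_2^*)^{\Box}$ — indeed $5$ is \emph{not} a square in ${\bf Z}_2$, so the discriminant genuinely changes as a square class!) but also the Hasse-Minkowski invariant and the parity (type $e$ versus $o$) of the lattice. The resolution is that $\langle u_0 \rangle \bot \langle c\rangle$ has discriminant $u_0 c$ and the unit $u_0$ can always be rechosen within its square class — no, rather the point is that $\langle u_0, c \rangle \cong \langle u_0', c'\rangle$ over ${\bf Z}_2$ iff $u_0 c \equiv u_0' c'$ mod squares \emph{and} the Hasse invariants match, and $(u_0, c)_2 = (u_0, 5c)_2$ precisely when $(u_0, 5)_2 = 1$, which holds iff $u_0 \equiv \pm 1 \bmod 8$; for $u_0 \equiv \pm 3 \bmod 8$ one instead uses that the orthogonal sum with the \emph{rest} of $B_1$ (which is nonempty since $B_1$ has size $m-1 \geq 1$, but could be a single unit) supplies the needed flexibility — this is exactly why the statement is about $\langle u_0\rangle \bot 5B_1$ rather than $5B_1$ alone. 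I would organise the proof around two cases according to $(u_0, 5)_2 = \pm 1$, invoking Lemma 4.1.3 (1) to strip off $2$-adic valuation and the ${\bf Z}_2$-classification theorem to conclude, and I expect the $(u_0,5)_2 = -1$ case to require the extra device of borrowing a unit from $B_1$ via a Witt-type cancellation.
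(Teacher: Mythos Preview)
Your approach has a genuine gap that cannot be repaired. You set out to prove the stronger statement $u_0 \bot 5B_1 \sim_{{\bf Z}_2} u_0 \bot B_1$, but this is simply false in general. The discriminant of $u_0 \bot 5B_1$ differs from that of $u_0 \bot B_1$ by $5^{\,m-1}$; since $5$ is a non-square in ${\bf Z}_2^*$, the two forms lie in different square-class discriminants whenever $m-1$ is odd. For a concrete counterexample take $m=2$, $u_0=1$, $B_1=(1)$: then $\langle 1,1\rangle$ and $\langle 1,5\rangle$ have discriminants $1$ and $5$ respectively, and are not even isometric over ${\bf Q}_2$. You notice this obstruction yourself midway through (``the discriminant genuinely changes as a square class!'') but the subsequent attempts to rescue the argument by Hasse-invariant bookkeeping or by ``borrowing a unit from $B_1$'' cannot succeed, because no amount of internal rearrangement changes the total discriminant.

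The paper proves (3) by an entirely different mechanism: it does \emph{not} claim the two lattices are isometric, only that they have the same local density. The point is that Kitaoka's explicit formula for $\alpha_2(A)$ (see \cite{Ki2}, page 109) depends only on certain coarser invariants $n_{A,j}$, $q_{A,j}$, $E_{A,j}$ attached to the Jordan splitting of $A$, and one checks directly that multiplying $B_1$ by the unit $5$ --- in the presence of the odd unimodular block $u_0$ --- leaves all of these invariants unchanged. In other words, $u_0\bot B_1$ and $u_0\bot 5B_1$ lie in the same fibre of the map (lattice) $\mapsto$ (Kitaoka invariants), even though that fibre contains more than one isometry class. The correct route is therefore to open Kitaoka's density formula and verify this invariance, not to attempt an isometry.
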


\begin{proof} 
The assertion (1) follows from Theorem 5.6.4 (a) of \cite{Ki2}. 
 The assertion  (2)  follows from (4) on Page 111 of \cite{Ki2}.  For a nondegenerate half-integral matrix $A,$ let $W_A$  be the  quadratic space over ${\bf Z}_p$ 
 associated with $A,$  and $n_{A,j}, q_{A,j}$ and $E_{A,j}$ be the quantities $n_j,q_j$ and $E_j,$ respectively, on Page 109 of \cite{Ki2} defined for $W_A.$ Then the transformation $u_0 \bot B_1 \mapsto u_0 \bot 5B_1$ does not change these quantities. This proves the assertion (3).


\end{proof}

Now let $R$ be a commutative ring. Then the group $GL_m(R) \times R^*$ acts on $S_m(R).$ We write $B_1 \approx_{R} B_2$ if $B_2 \sim_{R} \xi B_1$ with some $\xi \in R^*.$ 
Let $m$ be a positive integer. Then for $B \in S_m({\bf Z}_p)$ let $\widetilde {\mathcal S}_{m,p}(B)$ denote the set  of elements of $S_{m}({\bf Z}_p)$ such that $B' \approx_{{\bf Z}_p} B,$ and let ${\mathcal S}_{m-1,p}(B)$ denote the set  of elements of $S_{m-1}({\bf Z}_p)$ such that $1 \bot B' \approx_{{\bf Z}_p} B.$


\begin{lems}
Let $m$ be a positive even integer. Let $B \in S_m({\bf Z}_2)_o^{\times}.$ Then  
    $$\sum_{B' \in {\mathcal S}_{m-1,2}(B)/\sim} {1 \over \alpha_2(B')}={\#(\widetilde {\mathcal S}_{m,2}(B)/\!\!\sim)  \over 2 \alpha_2(B)}.$$
\end{lems}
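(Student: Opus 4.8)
The plan is to rewrite each side of the identity in terms of one finite group of ``automorphous'' unit square classes of $B$, and then to reduce the equality to the standard recursion for primitive local densities together with an elementary count of how many unit square classes $B$ primitively represents.

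First I would set $H=\{\eta{{\bf Z}_2^*}^{\Box}\in{\bf Z}_2^*/{{\bf Z}_2^*}^{\Box}\mid \eta B\sim B\}$; this is a subgroup of ${\bf Z}_2^*/{{\bf Z}_2^*}^{\Box}$, a group of order $4$. Since $m$ is even, $\det(\eta B)\equiv\det B\bmod{{\bf Z}_2^*}^{\Box}$ for every unit $\eta$, so each class in $\widetilde{\mathcal S}_{m,2}(B)/\!\!\sim$ has the form $[\eta B]$, and $\eta\mapsto[\eta B]$ induces a bijection $({\bf Z}_2^*/{{\bf Z}_2^*}^{\Box})/H\simeq\widetilde{\mathcal S}_{m,2}(B)/\!\!\sim$; hence $\#(\widetilde{\mathcal S}_{m,2}(B)/\!\!\sim)=4/\#H$. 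Because $\alpha_2(dB)=\alpha_2(B)$ for $d\in{\bf Z}_2^*$ (Lemma~4.1.4\,(1) with $r=0$), the right-hand side of the lemma equals $2/(\#H\,\alpha_2(B))$.

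Next I would show the left-hand side equals $2/(\#H\,\alpha_2(B))$ as well, by identifying ${\mathcal S}_{m-1,2}(B)/\!\!\sim$ with orthogonal complements. Since $B\in S_m({\bf Z}_2)_o^{\times}$ primitively represents a unit, the set $\Sigma$ of pairs $(v,D)$ — with $v\in{\bf Z}_2^*$ taken modulo squares, $D\in S_{m-1}({\bf Z}_2)^{\times}$ taken modulo $\sim$, and $v\bot D\sim B$ — is non-empty. The map $\Sigma\to{\mathcal S}_{m-1,2}(B)/\!\!\sim$, $(v,D)\mapsto[v^{-1}D]$, is well defined (as $v^{-1}B\sim 1\bot(v^{-1}D)$) and onto (if $1\bot B'\sim\xi B$ then $(\xi^{-1},\xi^{-1}B')\mapsto[B']$); the group $H$ acts freely on $\Sigma$ by $\eta\cdot(v,D)=(\eta v,\eta D)$, and the fibres of the map are exactly its orbits. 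Using $\alpha_2(v^{-1}D)=\alpha_2(D)=\alpha_2(\eta D)$ (unit–invariance of $\alpha_2$), this gives
$$\sum_{B'\in{\mathcal S}_{m-1,2}(B)/\sim}\frac1{\alpha_2(B')}=\frac1{\#H}\sum_{(v,D)\in\Sigma}\frac1{\alpha_2(D)}.$$
Grouping $\Sigma$ by the square class of $v$ and applying the recursion $\sum_{D:\,v\bot D\sim B}\alpha_2(D)^{-1}=\beta_2(B,v)/\alpha_2(B)$ — a standard recursion for primitive local densities, the analogue for the unimodular rank-one form $v$ of the formula of \cite{Kat1} used in the proof of Lemma~4.1.1 — yields $\sum_{(v,D)\in\Sigma}\alpha_2(D)^{-1}=\alpha_2(B)^{-1}\sum_v\beta_2(B,v)$, the sum being over the four classes of ${\bf Z}_2^*/{{\bf Z}_2^*}^{\Box}$. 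It then remains to compute $\sum_v\beta_2(B,v)=2$: the substitution $x\mapsto sx$ ($s\in{\bf Z}_2^*$) shows that $\#\{x\bmod 2^a\ \text{primitive}:B[x]\equiv w\bmod 2^{a+1}\}$ depends only on the square class of the unit $w$, so $\sum_v\#\{x\bmod 2^a\ \text{primitive}:B[x]\equiv v\bmod 2^{a+1}\}=2^{-(a-2)}\#\{x\bmod 2^a:B[x]\ \text{odd}\}$; and since $B$ is odd, ``$B[x]$ odd'' is a single non-zero ${\bf F}_2$-linear condition on $x\bmod 2$, whence $\#\{x\bmod 2^a:B[x]\ \text{odd}\}=2^{am-1}$ and $\sum_v\beta_2(B,v)=\lim_{a\to\infty}2^{a(1-m)}\cdot 2^{am-1}/2^{a-2}=2$. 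Chaining the three displays gives the left-hand side $=2/(\#H\,\alpha_2(B))$, which matches the right-hand side.

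The step I expect to be the main obstacle is the $p=2$ bookkeeping inside Step~2 and the final count: checking carefully that $(v,D)\mapsto[v^{-1}D]$ is onto and that $H$ acts freely with orbits equal to the fibres, and that the evenness of $m$ — used through $\det(\eta B)\equiv\det B$, which also forces $m\ge 2$ and hence $\delta_{m,1}=0$ in the definition of $\beta_2$ — is exactly what makes the count equal $2$ rather than $1$ (the stated identity genuinely fails for $m=1$). One must also be careful that the primitive-density recursion is invoked in the correct normalization at $p=2$, where the ``$S_\ell({\bf Z}_2)_e$'' conventions enter the definitions of $\alpha_2$ and $\beta_2$. As an alternative avoiding the recursion, one can run a finite case analysis over the Jordan decomposition of $B$: enumerate $\widetilde{\mathcal S}_{m,2}(B)/\!\!\sim$ and ${\mathcal S}_{m-1,2}(B)/\!\!\sim$ using the classification of $2$-adic quadratic forms (scales, ranks, determinants, oddities) and evaluate the densities directly via Lemmas~4.1.3 and~4.1.4; this is more mechanical but heavier, especially in keeping track of the factor $2$.
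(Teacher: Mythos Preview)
Your argument is correct and takes a genuinely different route from the paper's. The paper works entirely with the Jordan decomposition of $B$: it introduces the sets $S^{(0)}_m({\bf Z}_2/2^e{\bf Z}_2,\{n_i\},\{t_i\})$, expresses both sides as explicit products involving $\#SL_{n_i}({\bf Z}_2/2^e{\bf Z}_2)$, builds a concrete map $\Upsilon$ stripping off the first odd diagonal entry, and computes the fibre sizes by hand. Your approach is more structural: you reduce both sides to $2/(\#H\cdot\alpha_2(B))$ via the free $H$-action on $\Sigma$, the recursion $\sum_{[D]:\,v\bot D\sim B}\alpha_2(D)^{-1}=\beta_2(B,v)/\alpha_2(B)$, and the neat count $\sum_v\beta_2(B,v)=2$ coming from the linearity of $B[x]\bmod 2$. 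This is shorter and more conceptual; the paper's method, by contrast, never invokes primitive densities and is closer in spirit to the mod-$2^e$ computations used elsewhere in Section~4.

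One point to tighten: the recursion you invoke is correct, but it is \emph{not} the formula of \cite{Kat1} used in Lemma~4.1.1(2). That formula expresses $\alpha_p(S,T)$ for $S,T$ of the \emph{same} rank as a sum of primitive densities $\beta_p(S,T[W^{-1}])$; applied to $T=v$ of rank one it only recovers $\alpha_2(B,v)=\beta_2(B,v)$. What you actually need is the orbit--stabilizer identity for primitive representations of a rank-one unimodular form, proved by double-counting $\{g\in GL_m({\bf Z}_2/2^e):(B[g])_{11}\equiv v\bmod 2^{e+1},\ (B[g])_{1j}\equiv 0\bmod 2^e\ (j\ge 2)\}$ first over the first column and then over the lower-right block. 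This is standard in spirit (it is essentially the local mass formula for representations, cf.\ Kitaoka \cite{Ki2}, \S5.6), but since the paper does not state it you should either supply this short double-count or give a precise reference rather than pointing to \cite{Kat1}. The remaining ingredients --- the freeness of the $H$-action, the surjectivity of $(v,D)\mapsto[v^{-1}D]$ with fibres the $H$-orbits, and the evaluation $\sum_v\beta_2(B,v)=2$ --- are carefully argued and go through as written.
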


 \begin{proof} For a positive integer $l$ let $l=l_1+\cdots +l_r$ be the partition of $l$ by positive integers, and $\{s_i \}_{i=1}^r $  the set of nonnegative integers such that $0 \le s_1 < \cdots <s_r.$ Then for a positive  integer $e$ let $S_l^0({\bf Z}_2/2^e{\bf Z}_2,\{l_i\},\{s_i\})$ be the subset of $S_l({\bf Z}_2/2^e{\bf Z}_2)$ consisting of symmetric matrices of the form $2^{s_1}U_1 \bot  2^{s_2}U_2 \bot \cdots \bot 2^{s_r} U_r$ with $U_i \in S_{l_i}({\bf Z}_2/2^e{\bf Z}_2)$ unimodular. Let $B \in S_m({\bf Z}_2)_o$ and $\det B=(-1)^{m/2}d.$ Then $B$ is equivalent, over ${\bf Z}_2,$ to a matrix of the following form:
 $$2^{t_1}W_1 \bot 2^{t_2}W_2 \bot \cdots \bot 2^{t_r} W_r,$$ 
where $0=t_1 < t_1<\cdots <t_r$ and $W_1,...,W_{r-1},$ and $W_r$ are unimodular matrices of degree $n_1,...,n_{r-1},$ and $n_r,$ respectively, and in particular, $W_1$ is odd unimodular. Then by Lemma 3.2 of \cite{I-S}, similarly to (3.5) of \cite{I-S}, for a sufficiently large integer $e,$ we have
\begin{eqnarray*}
\lefteqn{
{\#(\widetilde {\mathcal S}_{m,2}(B)/\!\!\sim)  \over  \alpha_2(B)}=\sum_{\widetilde B \in \widetilde {\mathcal S}_{m,2}(B)/\sim} {1 \over \alpha_2(\widetilde B)} } \\
&=& 2^{m-1}2^{ -\nu(d)+
\sum_{i=1}^r n_i(n_i-1)e/2-(r-1)(e-1)-
\sum_{1 \le j < i \le r}n_in_jt_j}  \\
&& \times \prod_{i=1}^r \#(SL_{n_i}({\bf Z}_2/2^e{\bf Z}_2))^{-1}\#\widetilde S^{(0)}_m({\bf Z}_2/2^e{\bf Z}_2,\{n_i\},\{t_i\},B),
\end{eqnarray*}
where $\widetilde S^{(0)}_m({\bf Z}_2/2^e{\bf Z}_2,\{n_i\},\{t_i\},B)$ is the subset of $S^{(0)}_m({\bf Z}_2/2^e{\bf Z}_2,\{n_i\},\{t_i\})$ consisting of matrices $A$ such that $A \approx_{{\bf Z}_2/2^e{\bf Z}_2} B.$ We note that our local density $\alpha_2(\widetilde B)$ is $2^{-m}$ times that in \cite{I-S} for $\widetilde B \in S_m({\bf Z}_2).$  If $n_1 \ge 2,$ put $r'=r,n_1'=n_1-1,n_2'=n_2,..,n_{r}'=n_r,$ and $t_i'=t_i$ for $i=1,...,r',$ and if $n_1=1,$ put $r'=r-1,n_i'=n_{i+1}$ and $t_i'=t_{i+1}$ for $i=1,...,r'.$  Let $S^{(0)}_{m-1}({\bf Z}_2/2^e{\bf Z}_2,\{n_i'\},\{t_i'\},B)$ be the subset of $S^{(0)}_{m-1}({\bf Z}_2/2^e{\bf Z}_2,\{n_i'\},\{t_i'\})$ consisting of matrices $B' \in S_{m-1}({\bf Z}_2/2^e{\bf Z}_2)$ such that $1 \bot B' \approx_{{\bf Z}_2/2^e{\bf Z}_2} B.$ Then, similarly, we obtain
\begin{eqnarray*}
\sum_{ B' \in  {\mathcal S}_{m-1,2}(B)/\sim} {1 \over \alpha_2(B')}  = 2^{m-2}2^{-\nu(d)+\sum_{i=1}^{r'} n_i'(n_i'-1)e/2-(r'-1)(e-1)-\sum_{1 \le j < i \le r'}n_i'n_j't_j'} \\
\times \prod_{i=1}^{r'} \#(SL_{n_i'}({\bf Z}_2/2^e{\bf Z}_2))^{-1}\#S^{(0)}_{m-1}({\bf Z}_2/2^e{\bf Z}_2,\{n_i'\},\{t_i'\},B).\hspace*{10mm}
\end{eqnarray*}
Take an element $A$ of $\widetilde S^{(0)}_m({\bf Z}_2/2^e{\bf Z}_2,\{n_i\},\{t_i\},B).$ Then  
$$A=2^{t_1}U_1 \bot 2^{t_2}U_2 \bot \cdots \bot 2^{t_r} U_r$$
 with $U_i \in S_{n_i}({\bf Z}_2/2^e{\bf Z}_2)$ unimodular. Put $U_1=(u_{\lambda \mu})_{n_1 \times n_1}.$ Then by the assumption there exists an integer $1 \le \lambda \le n_1$ such that $u_{\lambda\lambda} \in {\bf Z}_2^*.$ Let $\lambda_0$ be the least integer such that $u_{\lambda_0\lambda_0} \in {\bf Z}_2^*,$ and $V_1$ be the matrix obtained from $U_1$ by interchanging the first and $\lambda_0$-th lows and the first and $\lambda_0$-th columns. Write $V_1$ as $V_1=\mattwo(v_1;{\bf v}_1;{}^t{\bf v}_1;V')$ with $v_1 \in {\bf Z}_2^*,{\bf v}_1 \in M_{1,n_1-1}({\bf Z}_2),$ and $V' \in S_{n_1-1}({\bf Z}_2).$ Here we understand that $V'-\ {}^t{\bf v}_1{\bf v}_1$ is the empty matrix if $n_1=1.$ Then 
$$V_1 \sim \mattwo(v_1;0;0;V'-\ {}^t{\bf v}_1v_1^{-1}{\bf v}_1).$$
Then the map $A \mapsto {v_1}^{-1}(2^{t_1} (V'-\ {}^t{\bf v}_1v_1^{-1}{\bf v}_1) \bot 2^{t_2}U_2 \bot \cdots \bot 2^{t_r}U_r)$ induces a map $\Upsilon$ from $\widetilde S^{(0)}_m({\bf Z}_2/2^e{\bf Z}_2,\{n_i\},\{t_i\},B)$ to $S^{(0)}_{m-1}({\bf Z}_2/2^e{\bf Z}_2,\{n_i'\},\{t_i'\},B).$ By a simple calculation, we obtain 
$$\#\Upsilon^{-1}(B')=2^{(e-1)n_1}(2^{n_1}-1)$$
 for any $B' \in S^{(0)}_{m-1}({\bf Z}_2/2^e{\bf Z}_2,\{n_i'\},\{t_i'\},B).$ We also note that
$$\#SL_{n_1}({\bf Z}_2/2^e{\bf Z}_2)=2^{(e-1)(2n_1-1)}2^{n_1-1}(2^{n_1}-1)\#(SL_{n_1-1}({\bf Z}_2/2^e{\bf Z}_2)) \ {\rm or} \ 1$$
according as $n_1 \ge 2$ or $n_1=1,$ and 
\begin{eqnarray*}
\lefteqn{\sum_{i=1}^r n_i(n_i-1)e/2-(r-1)(e-1)-\sum_{1 \le j < i \le r}n_in_jt_j } \\
&=&e_{n_1}+\sum_{i=1}^{r'}n_i'(n_i'-1)e/2-(r'-1)(e-1)+\sum_{1 \le j < i \le r'}n_i'n_j't_j',
\end{eqnarray*}
where $e_{n_1}=(n_1-1)e$ or $e_{n_1}=1-e$ according as $n_1 \ge 2$ or $n_1=1.$   Hence 
\begin{eqnarray*}
\lefteqn{
2^{m-1}2^{-\nu(d)+\sum_{i=1}^r n_i(n_i-1)e/2-(r-1)(e-1)-\sum_{1 \le j  < i \le r}n_in_jt_j}
} \\
&& \times \prod_{i=1}^r \#(SL_{n_i}({\bf Z}_2/2^e{\bf Z}_2))^{-1}\#\widetilde S^{(0)}_m({\bf Z}_2/2^e{\bf Z}_2,\{n_i\},\{t_i\},B)
\end{eqnarray*}
\begin{eqnarray*}
\lefteqn{
\hspace*{2.5mm}=2 \cdot 2^{m-2}2^{-\nu(d)+\sum_{i=1}^{r'} n_i'(n_i'-1)e/2-(r'-1)(e-1)-\sum_{1 \le j \le i \le r'}n_i'n_j't_j'} }\\ 
&& \hspace*{2.5mm}\times \prod_{i=1}^{r '}\#(SL_{n_i'}({\bf Z}_2/2^e{\bf Z}_2))^{-1}\#S^{(0)}_{m-1}({\bf Z}_2/2^e{\bf Z}_2,\{n_i'\},\{t_i'\},B).
\end{eqnarray*}
This proves the assertion. 
 \end{proof} 
   





\subsection{Siegel series }

\noindent
{ }

\bigskip

For a half-integral matrix $B$ of degree $m$ over ${\bf Z}_p,$ let $(\overline{W},\overline {q})$ denote the quadratic space over ${\bf Z}_p/p{\bf Z}_p$ defined by the quadratic form $\overline {q}({\bf x})=B[{\bf x}] \ {\rm mod} \ p,$ and define the radical $R(\overline {W})$ of $\overline {W}$ by
$$R(\overline {W})=\{{\bf x} \in \overline {W} \, | \, \overline {B}({\bf x},{\bf y})=0 \ {\rm for \ any } \ {\bf y} \in \overline {W} \},$$
where $\overline {B}$ denotes the associated symmetric bilinear form of $\overline {q}.$ 
We then put $l_p(B)= {\rm rank}_{{\bf Z}_p/p{\bf Z}_p} R(\overline {W})^{\perp},$ where $R(\overline {W})^{\perp}$ is the orthogonal complement of $R(\overline {W})^{\perp}$ in $\overline {W}.$ Furthermore, in case $l_p(B)$ is even,  put $\overline {\xi}_p(B)=1$ or $-1$ according as $R(\overline {W})^{\perp}$ is hyperbolic or not. In case $l_p(B)$ is odd, we put $\overline {\xi}_p(B)=0.$ Here we make the convention that $\xi_p(B)=1$ if $l_p(B)=0.$ We note that $\overline {\xi}_p(B)$ is different from the $\xi_p(B)$ in general, but they coincide if $B \in {\mathcal L}_{m,p} \cap {1 \over 2}GL_m({\bf Z}_p).$  
 
Let $m$ be a positive even integer.
For $B \in {\mathcal L}_{m-1,p}^{(1)}$ put $$B^{(1)}=\mattwo(1;r/2; {}^tr/2;{(B+{}^trr)/4}),$$ where $r$ is an element of ${\bf Z}_p^{m-1}$ such that $B+{}^trr \in 4{\mathcal L}_{m-1,p}.$ Then we put
$\xi^{(1)}(B)=\xi(B^{(1)})$ and $\overline {\xi}^{(1)}(B)=\overline {\xi}(B^{(1)}).$ These do not depend on the choice of $r,$ and we have $\xi^{(1)}(B)=\chi((-1)^{m/2}\det B).$ 

Let $p \not=2.$ Let $j=0$ or $1.$ Then an element $B$ of ${\mathcal L}_{m-j,p}^{(j)}$ is equivalent, over ${\bf Z}_p$, to $\Theta \bot pB_1$
 with $\Theta \in GL_{m-n_1-j}({\bf Z}_p) \cap S_{m-n_1-j}({\bf Z}_p)$ and $B_1 \in S_{n_1}({\bf Z}_p)^{\times}.$ Then $\overline {\xi}^{(j)}(B)=0$ if $n_1$ is odd, and $\overline {\xi}^{(1)}(B)=\chi((-1)^{(m-n_1)/2} \det \Theta)$ if $n_1$ is even.  

Let $p=2.$ Then an element $B \in {\mathcal L}_{m-1,2}^{(1)}$ is equivalent, over ${\bf Z}_2,$  to a matrix of the form $2\Theta \bot B_1,$ where $\Theta \in GL_{m-n_1-2}({\bf Z}_2) \cap S_{m-n_1-2}({\bf Z}_2)_e$ and $B_1$ is one of the following three types: \begin{enumerate}
\item[(I)]
 $B_1=a  \bot 4B_2$ with $a \equiv -1 \ {\rm mod} \ 4, $ and $B_2 \in S_{n_1}({\bf Z}_2)_e^{\times}$; \vspace*{1mm}

\item[(II)]
 $B_1 \in  4S_{n_1+1}({\bf Z}_2)^{\times}$; \vspace*{1mm} 

\item[(III)]
 $B_1=a  \bot 4B_2$ with $a \equiv -1 \ {\rm mod} \ 4, $ and $B_2 \in S_{n_1}({\bf Z}_2)_o.$
 
 \end{enumerate}
Then $\overline {\xi}^{(1)}(B)=0$ if $B_1$ is of type (II) or (III). If $B_1$ is of type (I), then $(-1)^{(m-n_1)/2} a \det \Theta$  mod $({\bf Z}_2^*)^{\Box}$ is uniquely determined by $B,$  and we have $\overline {\xi}^{(1)}(B)=\chi((-1)^{(m-n_1)/2} a \det \Theta).$
Moreover, an element $B \in {\mathcal L}_{m,2}^{(0)}$ is equivalent, over ${\bf Z}_2,$  to a matrix of the form $\Theta \bot 2B_1,$ where $\Theta \in GL_{m-n_1-2}({\bf Z}_2) \cap S_{m-n_1-2}({\bf Z}_2)_e$ and $B_1 \in S_{n_1}({\bf Z}_2)^{\times} .$ 
Suppose that $p \neq 2,$ and let ${\mathcal U}={\mathcal U}_p$  be a complete set of representatives of ${\bf Z}_p^*/({\bf Z}_p^*)^{\Box}.$ Then, for each positive integer $l$ and $d \in {\mathcal U}_{p}$, there exists a unique, up to ${\bf Z}_p$-equivalence, element of $S_{l}({\bf Z}_p)  \cap GL_{l}({\bf Z}_p)$ whose determinant is $(-1)^{[(l+1)/2]}d,$ which will be denoted by  $\Theta_{l,d}.$ 
Suppose that $p=2,$ and put  ${\mathcal U}={\mathcal U}_{2}=\{1 ,5 \}.$ Then for each positive even integer $l$ and $d \in {\mathcal U}_{2}$ there exists a unique, up to ${\bf Z}_2$-equivalence, element of $S_{l}({\bf Z}_2)_{e} \cap GL_{l}({\bf Z}_2)$ whose determinant is  $(-1)^{l/2} d,$ which will be also denoted by  $\Theta_{l,d}.$ In particular, if $p$ is any prime number and $l$ is even, we put $\Theta_l=\Theta_{l,1}$ We make the convention that $\Theta_{l,d}$ is the empty matrix if $l=0.$ For an element $d \in {\mathcal U}$ we use the same symbol $d$ to denote the coset $d$ mod  $({\bf Z}_p^*)^{\Box}.$

 For $T \in {\mathcal L}_{n-1,p}^{(1)},$  let  $\widetilde F_p^{(1)}(T,X)$ be the polynomial in $X$ and $X^{-1}$
defined in Section 3.  
We also define a polynomial $G_p^{(1)}(T,X)$ in $X$ by
\begin{eqnarray*}
\lefteqn{
G_p^{(1)}(T,X)} \\
&=&\sum_{i=0}^{n-1} (-1)^i p^{i(i-1)/2} (X^2p^{n})^i \sum_{D \in GL_{n-1}({\bf Z}_p) \backslash {\mathcal D}_{n-1,i}} F_p^{(1)}(T[D^{-1}],X). 
\end{eqnarray*}

\begin{lems} 
 Let $n$ be the fixed positive even integer. Let $B \in {\mathcal L}_{n-1,p}^{(1)}$ and put  $\xi_0=\chi((-1)^{n/2} \det B).$ \\
 {\rm (1)} Let $p \not=2, $ and suppose that  $B=\Theta_{n-n_1-1,d} \bot pB_1$  with $d \in {\mathcal U}$ and $B_1 \in S_{n_1}({\bf Z}_p)^{\times}.$ 
Then 
\begin{eqnarray*}
\lefteqn{
G_p^{(1)}(B,Y)
}{ } \\
&=& \hspace*{-2.5mm}
\left\{
\begin{array}{cl}
\displaystyle { 1-\xi_0 p^{n/2}Y  \over 1-p^{n_1/2+n/2}\overline{\xi}^{(1)}(B)Y }\prod_{i=1}^{n_1/2}(1-p^{2i+n}Y^2)
& {\rm if} \ n_1 \ {\rm is \ even,}  \\[2mm]
\displaystyle (1-\xi_0 p^{n/2}Y)\prod_{i=1}^{(n_1-1)/2}(1-p^{2i+n}Y^2) 
& {\rm if} \  n_1 \ {\rm is \ odd}
.
\end{array}\right. 
\end{eqnarray*}
 {\rm (2)} Let $p=2.$ Suppose that $n_1$ is even and that $B=2\Theta \bot B_1 $  with $\Theta \in S_{n-n_1-2}({\bf Z}_2)_e \cap  GL_{n-n_1-2}({\bf Z}_2) $ and $B_1 \in S_{n_1+1}({\bf Z}_2)^{\times}.$  Then 
\begin{eqnarray*}
\lefteqn{G_2^{(1)}(B,Y)}{} \\
&=& \hspace*{-2.5mm}
\left\{ \begin{array}{cl}
{\displaystyle {1-\xi_02^{n/2}Y \over  1-2^{n_1/2+n/2} \overline{\xi}^{(1)}(B)Y}\prod_{i=1}^{n_1/2}(1-2^{2i+n}Y^2)}
&{{\rm if} \  B_1 \ {\rm is \ of \ type \ (I),}}\\[2.5mm]
\displaystyle (1-\xi_02^{n/2}Y){\prod_{i=1}^{n_1/2}(1-2^{2i+n}Y^2)} 
&\hspace*{-5mm}{\rm if} \ { B_1 \ {\rm is \ of \ type \ (II) \ or \ (III)}}. 
 
 \end{array}\right.
\end{eqnarray*} 
\end{lems}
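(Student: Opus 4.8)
The plan is to reduce the computation of $G_p^{(1)}(B,Y)$ to the known formula for the Siegel series $F_p(T,X)$ of the matrix $B^{(1)}$ (or rather its standard representative $\Theta_{n-n_1,d} \bot pB_1$ after applying Corollary to Lemma 4.1.3 and part (2.1), (2.2) of Lemma 4.1), combined with the explicit expression for $b_p(T,s)$ in terms of $F_p$ stated in Section 2 via Kitaoka. The key observation is that the alternating sum $\sum_{i=0}^{n-1}(-1)^i p^{i(i-1)/2}(X^2 p^n)^i \sum_{D \in GL_{n-1}({\bf Z}_p)\backslash {\mathcal D}_{n-1,i}} F_p^{(1)}(T[D^{-1}],X)$ is exactly the kind of ``$\phi$-operator'' or Rankin-type generating sum that collapses: for each reduced $D$ one rewrites $F_p^{(1)}(B[D^{-1}],X) = F_p(B^{(1)}[\psi_{n-1,n}(D)^{-1}],X)$ using the Corollary, so that the whole sum over $i$ and over $D\in GL_{n-1}({\bf Z}_p)\backslash{\mathcal D}_{n-1,i}$ becomes a single sum over $GL_n({\bf Z}_p)\backslash\widetilde\Omega^{(0)}(2^{\delta_{2,p}}B^{(1)})$, weighted by $p^{\nu(\det W)}$ and a sign, which by Lemma 4.1.2(2) is a ratio $\alpha_p(B^{(1)},\,\cdot\,)/\alpha_p(\,\cdot\,)$-type expression.

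First I would treat $p\neq 2$. Write $B = \Theta_{n-n_1-1,d}\bot pB_1$; by Lemma 4.1(2.1) one has $B^{(1)}\sim 1\bot B = 1\bot\Theta_{n-n_1-1,d}\bot pB_1 \sim \Theta_{n-n_1,d'}\bot pB_1$ for the appropriate $d'$ with $\chi((-1)^{n/2}\det(\Theta_{n-n_1,d'}))$ matching $\xi_0$. Then I would invoke a known identity (of the type in \cite{Kat1} or \cite{I-K2}) expressing $G_p^{(1)}(B,Y)$ as the ``numerator-removed'' form of $F_p$ for $\Theta_{n-n_1,d'}\bot pB_1$; concretely the sum over ${\mathcal D}_{n-1,i}$ with the stated weights is designed so that it equals $F_p(\Theta_{n-n_1,d'}\bot pB_1, \cdot)$ divided by $F_p(pB_1,\cdot)$-analogue up to the standard factor $\prod(1-p^{2i-2s})$. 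Using Lemma 4.2.2 (the behavior of $\overline\xi^{(j)}$ under the splitting $\Theta\bot pB_1$) one identifies the denominator $1-p^{n_1/2+n/2}\overline\xi^{(1)}(B)Y$ (even $n_1$) or its absence (odd $n_1$), and the numerator $\prod_{i=1}^{\lfloor n_1/2\rfloor}(1-p^{2i+n}Y^2)$ together with the factor $(1-\xi_0 p^{n/2}Y)$ coming from the ``$(1-p^{-s})/(1-\xi_p(T)p^{n/2-s})$'' part of $b_p$. Substituting $Y = p^{-(n+1)/2}X \cdot(\text{shift})$ consistent with the definition $\widetilde F_p^{(1)}(T,X)=X^{-\mathfrak{e}_p^{(1)}(T)}F_p^{(1)}(T,p^{-(m+1)/2}X)$ then yields the displayed formula.

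For $p=2$ (part (2)), the same strategy applies but one must invoke the $p=2$ cases of Lemma 4.1 (2.2) and of the Corollary: writing $B = 2\Theta\bot B_1$ with $B_1$ of type (I), (II) or (III), the matrix $B^{(1)}$ picks up either a $\smallmattwo(2;1;1;(1+a)/2)$ block (type I, via Lemma 4.1(2.2) with $r_B\not\equiv 0$) or a block $1$ or $2$ (types II, III, via $r_B\equiv 0$), and Lemma 4.2.2 gives $\overline\xi^{(1)}(B)$ accordingly ($=0$ for types II, III, hence no denominator, matching the stated second line). I would run the same ${\mathcal D}_{n-1,i}$-collapse using Lemma 4.1.3(2.1)--(2.3) to pass to $GL_n({\bf Z}_2)\backslash\widetilde\Omega^{(0)}(2B^{(1)})$ and Lemma 4.1.5 for the requisite local density identities at $2$.

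The main obstacle will be the bookkeeping at $p=2$: verifying that the alternating sum over ${\mathcal D}_{n-1,i}$ transforms correctly under the three possible shapes of $B_1$ and the non-induced bijection $\widetilde\psi_{m-n_0-1,m}$ of Lemma 4.1.3(2.3), and that the powers of $2$ and the Hasse-invariant-type signs (the $(a,b)_2$ factors hidden in $\xi_0$ versus $\overline\xi^{(1)}$) match up. The case distinction ``type (I) vs. (II)/(III)'' is precisely where $\overline\xi^{(1)}(B)$ can differ from $\xi_0 = \chi((-1)^{n/2}\det B) = \xi^{(1)}(B)$, so one must be careful that the left side of the formula is genuinely determined by $B$ and not just by its genus invariants at odd primes; this is guaranteed by the remark following the definition of $\overline\xi^{(1)}$. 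Apart from this, the argument is a routine, if lengthy, specialization of the Siegel-series machinery already set up, and I would expect it to parallel closely the corresponding lemma in \cite{I-K3}.
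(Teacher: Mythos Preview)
Your core idea is right and matches the paper: use the Corollary to Lemma 4.1.2 to rewrite each term $F_p^{(1)}(B[D^{-1}],Y)=F_p\bigl((B[D^{-1}])^{(1)},Y\bigr)$ and identify the whole alternating sum over $GL_{n-1}({\bf Z}_p)\backslash{\mathcal D}_{n-1,i}$ with the corresponding degree-$n$ sum for $B^{(1)}$, i.e.\ $G_p^{(1)}(B,Y)=G_p(B^{(1)},Y)$. The paper's proof is literally these two lines and then a citation.

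Where you diverge is in what comes next. After the reduction, the paper simply invokes Lemma~9 of Kitaoka \cite{Ki1}, which already gives the explicit closed form for $G_p(T,Y)$ in terms of $\xi_p(T)$, $\overline\xi_p(T)$ and the products $\prod(1-p^{2i+n}Y^2)$. Your plan to unwind the sum further into ratios $\alpha_p(B^{(1)},\cdot)/\alpha_p(\cdot)$ via Lemma 4.1.1(2), and then to chase the Siegel-series factorization $b_p(T,s)=F_p(T,p^{-s})\cdot(\ldots)$ by hand, is re-deriving Kitaoka's lemma rather than citing it. That route would eventually work, but it is substantially longer and the bookkeeping you anticipate at $p=2$ (tracking $\widetilde\psi_{m-n_0-1,m}$, the three types of $B_1$, the Hilbert-symbol signs) is already absorbed into Kitaoka's result once you know $G_p^{(1)}(B,Y)=G_p(B^{(1)},Y)$. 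The only genuine case analysis needed is to read off $l_p(B^{(1)})$ and $\overline\xi_p(B^{(1)})=\overline\xi^{(1)}(B)$ from the shape of $B$, and that is exactly the content of the paragraphs preceding the lemma (the description of types (I), (II), (III)), not something you need to prove inside the lemma itself.
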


\begin{proof} By Corollary to Lemma 4.1.2 and by definition we have $G_p^{(1)}(B,Y)=G_p(B^{(1)},Y).$ Thus the assertion follows from Lemma 9 of \cite{Ki1}. 
\end{proof}

\noindent
{\bf Remark.} Throughout the above lemma, $\overline{\xi}^{(1)}(B)=\xi_0(B)$ if $n_1=0.$ Hence we have $G_p^{(1)}(B,Y)=1$ in this case.

\begin{lems}
Let $B \in {\mathcal L}_{n-1,p}^{(1)}.$ Then 
$$\widetilde F_p^{(1)}(B,X)=\sum_{B' \in  {\mathcal L}_{n-1,p}^{(1)}/ GL_{n-1}({\bf Z}_p)  } X^{-\mathfrak{e}^{(1)}(B')}{\alpha_p(B',B) \over \alpha_p(B')}$$
$$ \times  G_p^{(1)}(B',p^{(-n-1)/2}X)(p^{-1}X)^{(\nu(\det B)-\nu(\det B'))/2}.$$
\end{lems}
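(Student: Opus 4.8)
The plan is to relate $\widetilde F_p^{(1)}(B,X)$, which by definition equals $X^{-\mathfrak{e}_p^{(1)}(B)}F_p(B^{(1)},p^{-(n+1)/2}X)$, to a sum over the $GL_{n-1}(\mathbf{Z}_p)$-classes of matrices $B'$ in the same $\approx_{\mathbf{Z}_p}$-family. The key input is the functional-equation-type expansion of the Siegel series under the Hecke-operator decomposition $M_{n-1}(\mathbf{Z}_p)^{\times}=\coprod_i \mathcal{D}_{n-1,i}$, which is precisely what $G_p^{(1)}(B',X)$ packages: first I would invoke Lemma 9 of Kitaoka \cite{Ki1} (via the identity $G_p^{(1)}(B',Y)=G_p(B'^{(1)},Y)$ established in the proof of Lemma 4.2.1, using the Corollary to Lemma 4.1.2) to write $F_p^{(1)}(B,X)$, i.e. $F_p(B^{(1)},X)$, as a sum over $D\in GL_{n-1}(\mathbf{Z}_p)\backslash M_{n-1}(\mathbf{Z}_p)^{\times}$ of $F_p^{(1)}(B[D^{-1}],X)$ weighted by $G$-type factors, restricted to those $D$ for which $B[D^{-1}]$ is still half-integral (and in $\mathcal{L}^{(1)}$).

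**From matrices to classes.** Next I would organize the sum over such $D$ by the $GL_{n-1}(\mathbf{Z}_p)$-equivalence class $B'$ of $B[D^{-1}]$. For a fixed class $B'$, the number of $D\in GL_{n-1}(\mathbf{Z}_p)\backslash\widetilde\Omega^{(1)}(B)$ with $B[D^{-1}]\sim B'$, weighted by $p^{\nu(\det W)}$-type factors, is exactly what Lemma 4.1.1(2) computes: $\alpha_p(B',B)/\alpha_p(B') = \#(GL_{n-1}(\mathbf{Z}_p)\backslash\widetilde\Omega(B',B))\,p^{(\nu(\det B)-\nu(\det B'))/2}$, and one uses the refinement that $\beta_p(B',B[W^{-1}])$ is $\alpha_p(B')$ or $0$ according to whether $B'\sim B[W^{-1}]$. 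Here the exponent bookkeeping is important: each $D$ contributes $p^{\nu(\det D)}=p^{(\nu(\det B)-\nu(\det B'))/2}$ since $\det(B[D^{-1}])=\det B/(\det D)^2$, which produces the factor $(p^{-1}X)^{(\nu(\det B)-\nu(\det B'))/2}$ once one also tracks the normalization $p^{-(n-1)/2}$ hidden inside $\widetilde F_p^{(1)}$ versus $F_p^{(1)}$. The shift $X^{-\mathfrak{e}^{(1)}(B')}$ likewise appears from converting $F_p^{(1)}(B',\cdot)$ into $\widetilde F_p^{(1)}(B',\cdot)$; since $\widetilde F_p^{(1)}$ is its own value under $X\mapsto X^{-1}$ (noted after its definition in Section 2), the substitution $p^{(-n-1)/2}X$ is consistent.

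**Assembling the formula.** Collecting these, the sum over $D$ becomes the asserted sum over $B'\in\mathcal{L}_{n-1,p}^{(1)}/GL_{n-1}(\mathbf{Z}_p)$ of $X^{-\mathfrak{e}^{(1)}(B')}\,\dfrac{\alpha_p(B',B)}{\alpha_p(B')}\,G_p^{(1)}(B',p^{(-n-1)/2}X)\,(p^{-1}X)^{(\nu(\det B)-\nu(\det B'))/2}$. The only subtlety is checking that the $G$-weights emerging from Kitaoka's Lemma 9 match $G_p^{(1)}(B',p^{(-n-1)/2}X)$ under the variable change $Y=p^{(-n-1)/2}X$: the definition of $G_p^{(1)}$ carries factors $(X^2p^n)^i$ which become $(Y^2p^{n}\cdot p^{n+1})^i\cdots$ after substitution, so I would carefully verify the power of $p$ in the Hecke eigenvalue weights against the $i$-indexed sum in the definition of $G_p^{(1)}(T,X)$.

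**Main obstacle.** I expect the genuinely delicate point to be the $p=2$ bookkeeping: the decomposition of $B\in\mathcal{L}_{n-1,2}^{(1)}$ into the three types (I), (II), (III) near $\overline\xi^{(1)}$, and the fact that $B^{(1)}$ is built from $B$ in two different ways depending on whether $r_B\equiv 0\bmod 2$ (Lemma 4.1), means the identity $G_p^{(1)}(B',Y)=G_p(B'^{(1)},Y)$ and the local-density ratios must be matched type-by-type, using Lemma 4.2.1(2) and Lemma 4.1.5. The exponents $\mathfrak{e}^{(1)}$ and $\nu(\det B')$ at $p=2$ also differ from the odd case by the offsets recorded in Lemma 3.1(2.2) (the $\nu_2(\det B)\geq m$ or $m\pm 1$ statements), so the clean uniform-looking conclusion requires that these offsets cancel between the $X^{-\mathfrak{e}^{(1)}(B')}$ and $(p^{-1}X)^{(\nu(\det B)-\nu(\det B'))/2}$ factors; I would treat this as the crux of the verification.
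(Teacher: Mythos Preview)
Your approach is essentially the same as the paper's: expand $\widetilde F_p^{(1)}(B,X)$ as a sum of $G_p^{(1)}(B[W^{-1}],p^{-(n+1)/2}X)X^{2\nu(\det W)}$ over $W\in GL_{n-1}(\mathbf Z_p)\backslash\widetilde\Omega^{(1)}(B)$, regroup by the class $B'$ of $B[W^{-1}]$, and convert the count $\#(GL_{n-1}(\mathbf Z_p)\backslash\widetilde\Omega(B',B))\,p^{(\nu(\det B)-\nu(\det B'))/2}$ into $\alpha_p(B',B)/\alpha_p(B')$ via Lemma 4.1.1(2). The exponent shift $X^{-\mathfrak e^{(1)}(B)}X^{2\nu(\det W)}=X^{-\mathfrak e^{(1)}(B')}X^{(\nu(\det B)-\nu(\det B'))/2}$ follows because $B$ and $B'$ share $[\mathfrak d^{(1)}]$, so $\mathfrak e^{(1)}(B)-\mathfrak e^{(1)}(B')=\nu(\det W)$.

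Two comments. First, the starting inversion identity (the first displayed line of the paper's proof) is not Kitaoka's Lemma~9; that lemma supplies the \emph{closed form} for $G_p$ used in Lemma~4.2.1. The inversion $F_p=\sum_W G_p(\,\cdot\,[W^{-1}])X^{2\nu(\det W)}$ is the M\"obius dual of the very definition of $G_p^{(1)}$ in terms of the double cosets $\mathcal D_{n-1,i}$ (cf.\ \cite{Kat1}); once you have $G_p^{(1)}(B',Y)=G_p(B'^{(1)},Y)$ from the Corollary to Lemma 4.1.2, it transfers verbatim to the ${}^{(1)}$-level. Second, the $p=2$ case-analysis you flag as the ``main obstacle'' is unnecessary here: the paper's argument is completely uniform in $p$. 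The type (I)--(III) distinctions and the parity of $r_B$ have already been absorbed into the definition of $\widetilde\Omega^{(1)}(B)$ and into the Corollary to Lemma~4.1.2, so no further $2$-adic bookkeeping is required for this lemma. Lemmas 4.1.5 and 4.2.1(2) are not used in this proof.
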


\begin{proof} We have 
\begin{eqnarray*}
\lefteqn{
\widetilde F_p^{(1)}(B,X) }\\
&=&\sum_{W \in  GL_{n-1}({\bf Z}_p) \backslash \widetilde \Omega^{(1)}(B)} X^{-\mathfrak{e}^{(1)}(B)}G_p^{(1)}(B[W^{-1}],p^{(-n-1)/2}X)X^{2\nu(\det W)} \\
&=& \sum_{B' \in   {\mathcal L}_{n-1,p}^{(1)}/GL_{n-1}({\bf Z}_p)}
\sum_{W \in  GL_{n-1}({\bf Z}_p) \backslash \widetilde \Omega(B',B)} 
\hspace*{-2.5mm}X^{-\mathfrak{e}^{(1)}(B)}G_p^{(1)}(B',p^{(-n-1)/2}X)X^{2\nu(\det W)} \\
&=&\sum_{B' \in   {\mathcal L}_{n-1,p}^{(1)}/GL_{n-1}({\bf Z}_p)  }X^{-\mathfrak{e}^{(1)}(B')}  \#(GL_{n-1}({\bf Z}_p) \backslash \widetilde \Omega(B',B))p^{(\nu(\det B)-\nu(\det B'))/2} \\
&& \hspace*{5mm}\times G_p^{(1)}(B',p^{(-n-1)/2}X)(p^{-1}X)^{(\nu(\det B)-\nu(\det B'))/2}.
\end{eqnarray*}
Thus the assertion follows from (2) of Lemma 4.1.1.
\end{proof}

\bigskip

 \subsection{Certain reduction formulas}
 
 \noindent
{ }

\bigskip

To give an explicit formula for the power series $P_{n-1}^{(1)}(d_0,\omega,X,t),$ we give certain reduction formulas, by which we can express  $P_{n-1}^{(1)}(d_0,\omega,X,t)$ in terms of the power series defined in \cite {I-S}. 
  First we  review the notion of canonical forms of the quadratic forms over ${\bf Z}_2$ in the sense of Watson \cite{W}. Let $B \in  {\mathcal L}_{m,2}^{\times}.$ 
Then $B$ is equivalent, over ${\bf Z}_2,$ to a  matrix of the following form:
$$\bot_{i=0}^r 2^{i}(V_i \bot U_i),$$
where $V_i=\bot_{j=1}^{k_i} c_{ij}$ with $0 \le k_i \le 2, c_{ij} \in {\bf Z}_2^*$ and $U_i= {1 \over 2}\Theta_{m_i,d}$ with $0 \le m_i, d \in {\mathcal U}.$  The degrees $k_i$ and $m_i$ of the matrices are uniquely determined by $B.$ Furthermore we can take the matrix  $\bot_{i=0}^r 2^{i}(V_i \bot U_i)$  uniquely so that it satisfies the following conditions:
\begin{enumerate}
\item[{\rm (c.1)}] $c_{i1}=\pm 1$ or $\pm 3$ if $k_i=1$ and $(c_{i1},c_{i2})=(1,\pm 1),(1,\pm 3),(-1,-1)$, or $(-1,3)$ if $k_i=2;$

\item[{\rm (c.2)}] $k_{i+2}=k_i=0$ if $U_{i+2}={1 \over 2}\Theta_{m_{i+2},5}$ with $m_{i+2} >0;$  

\item[{\rm (c.3)}] $-\det V_i \equiv 1 \ {\rm  mod} \ 4$ if $k_i=2$ and $U_{i+1}={1 \over 2}\Theta_{m_{i+1},5}$ with $m_{i+1} > 0;$

\item[{\rm (c.4)}] $(-1)^{k_i-1} \det V_i \equiv 1 \ {\rm  mod} \  4$ if $k_i,k_{i+1} > 0;$ 

\item[{\rm (c.5)}] $V_i \not= \mattwo(-1;0;0;c_{i2})$ if $k_{i-1}>0;$

\item[{\rm (c.6)}] $V_i=\phi, (\pm 1),\mattwo(1;0;0;\pm 1),$ or $\mattwo(-1;0;0;-1)$ if $k_{i+2}>0.$

\end{enumerate}

\noindent 
The matrix satisfying the conditions (c.1),\, $\cdots$,\, (c.6) is called the {\it canonical form} of $B,$ and denote it by $C(B).$ Now for  $V =\bot_{j=1}^k c_j$ with $1 \le k \le 2,$ put $\widetilde V=5c_1$ or $\widetilde V=5c_1 \bot c_2$ according as $V=c_1$ or $V=c_1 \bot c_2.$
 
\begin{lems}
For $B \in S_m({\bf Z}_2)_o^{\times},$ let $C(B) =V_0 \bot \bot_{i=1}^r (U_i \bot V_i)$ be the  canonical form of $B$ stated as above.  Let $l=l_B$ be the smallest integer such that $k_{2l+2}=0.$ Then we have
$$C(\widetilde V_0 \bot \bot_{i=1}^r (U_i \bot V_i))= V_0 \bot \bot_{i=1}^{2l-1} (U_i \bot V_i) \bot U_{2l} \bot C(\widetilde V_{2l}) \bot \bot_{i=2l+1}^r(U_i \bot V_i). $$ 
\end{lems}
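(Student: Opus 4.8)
The plan is to exhibit by hand a symmetric matrix over ${\bf Z}_2$ that is equivalent to $\widetilde V_0 \bot \bot_{i=1}^r(U_i \bot V_i)$ and that visibly satisfies Watson's canonical-form conditions (c.1)--(c.6), and then to conclude by uniqueness of canonical forms. The engine is the elementary ${\bf Z}_2$-isometry
$$5u \,\bot\, 4v \;\simeq\; u \,\bot\, 20v \qquad (u,v \in {\bf Z}_2^{*}),$$
which moves a factor of $5$ between two rank-one Jordan constituents whose scales differ by $4$. To prove it, note that the form $5ux^2+4vy^2$ represents $u$ over ${\bf Z}_2$: for any unit $y$ one has $1-4vu^{-1}y^2 \equiv 5 \ {\rm mod}\ 8$, hence $=5z^2$ for some $z \in {\bf Z}_2^{*}$ by Hensel's lemma, so $(x,y)=(z,y)$ is a solution. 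Splitting off the orthogonal complement of a norm-$u$ vector (a unit, so the splitting is orthogonal and the complement is free of rank one) and comparing determinants shows that complement is the rank-one form $20v$, which gives the isometry. Multiplying through by $2^{2j}$ and adjoining, on each side, the unchanged second diagonal entry of $\widetilde V_{2j}$ or of $V_{2j+2}$ together with every Jordan constituent at the remaining scales, one obtains for each $j$ a ${\bf Z}_2$-isometry of forms of the shape $\widetilde V_0 \bot \cdots$ that turns the scale-$2^{2j}$ constituent $\widetilde V_{2j}$ into $V_{2j}$ and the scale-$2^{2j+2}$ constituent $V_{2j+2}$ into $\widetilde V_{2j+2}$, and changes nothing else --- in particular every $U_i$ and every odd-scale $V_i$ stay put.

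Since $l$ is the least index with $k_{2l+2}=0$, the degrees $k_2, k_4, \dots, k_{2l}$ are all positive, so $V_0, V_2, \dots, V_{2l}$ are non-empty and each $\widetilde V_{2j}$ $(0\le j\le l)$ makes sense. Applying the basic move in turn at $j=0,1,\dots,l-1$ carries the factor of $5$ from $V_0$ up to $V_{2l}$: afterwards the scale-$2^0$ constituent is again $V_0$, the scale-$2^2,\dots,2^{2l-2}$ constituents are again $V_2,\dots,V_{2l-2}$, the scale-$2^{2l}$ constituent has become $U_{2l}\bot\widetilde V_{2l}$, and nothing else has moved. Replacing $\widetilde V_{2l}$ by its own canonical form $C(\widetilde V_{2l})$ --- a ${\bf Z}_2$-isometry at scale $2^{2l}$ which keeps the constituent diagonal of the same degree $k_{2l}$ and preserves $\det V_{2l} \ {\rm mod}\ 4$ --- turns $\widetilde V_0 \bot \bot_{i=1}^r(U_i \bot V_i)$ into exactly the matrix on the right-hand side of the assertion. (For $l=0$ there is nothing to transport: one just replaces $V_0$ by $C(\widetilde V_0)$.)

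It remains to check (c.1)--(c.6) for this matrix. As only the scale-$2^{2l}$ constituent was altered and $k_{2l}$, $\det V_{2l}\ {\rm mod}\ 4$ were preserved, every condition not referring to scale $2^{2l}$ is inherited verbatim from the canonicity of $C(B)$, while (c.1)--(c.5) at $i=2l$ follow from a short case analysis on the size and the residues mod $8$ of the entries of $V_{2l}$, using the conditions (c.1),(c.4),(c.5) already valid for $V_{2l}$ inside $C(B)$ and the shapes of the neighbours $U_{2l\pm1}$ (notably whether either equals ${1 \over 2}\Theta_{*,5}$) to exclude the exceptional configurations. The decisive point is (c.6) at $i=2l$: it is \emph{vacuous} because $k_{2l+2}=0$, so the ``$5$'' is allowed to settle there; at each earlier even scale $2j$ $(0\le j\le l-1)$ one has $k_{2j+2}>0$, so (c.6) is active, and since $V_{2j}$ then lies in the short list permitted by (c.6) --- all of whose members have $\det\equiv\pm1\ {\rm mod}\ 8$ --- one has $\det C(\widetilde V_{2j})=5\det V_{2j}\not\equiv\pm1 \ {\rm mod}\ 8$, so the ``$5$'' cannot stop earlier. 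By uniqueness, the matrix produced is $C(\widetilde V_0 \bot \bot_{i=1}^r(U_i \bot V_i))$. The propagation itself is routine once the basic isometry is in place; the main obstacle is the bookkeeping of (c.1)--(c.5) at scale $2^{2l}$, running through the possible shapes of $V_{2l}$ and of $U_{2l\pm1}$ and invoking the corresponding conditions for $C(B)$ to kill the bad cases.
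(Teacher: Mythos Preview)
Your proposal is correct and follows the same approach as the paper: both use the elementary ${\bf Z}_2$-isometry $5a_1 \bot 4a_2 \sim a_1 \bot 4\cdot 5a_2$ (for units $a_1,a_2$) to transport the factor $5$ from $V_0$ up through the even scales to $V_{2l}$. The paper's proof is extremely terse---it states the basic isometry, chains it to obtain $\widetilde V_0 \bot \bot_{i=1}^l V_{2i} \sim V_0 \bot \bot_{i=1}^{l-1} V_{2i} \bot \widetilde V_{2l}$, and then simply declares the assertion proved---whereas you additionally supply a proof of the basic isometry and an explicit (if sketched) verification that the right-hand side satisfies Watson's conditions (c.1)--(c.6), which the paper leaves entirely to the reader.
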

\begin{proof}
We note that $5a_1 \bot 4a_2 \sim a_1 \bot 4 \cdot 5a_2$ for $a_1,a_2 \in {\bf Z}_2^*.$ Hence we have
$$\widetilde V_0  \bot \bot_{i=1}^l  V_{2i} \sim V_0 \bot \bot_{i=1}^{l-1}  V_{2i} \bot \widetilde V_{2l}.$$
This proves the assertion.
\end{proof}
\begin{xcor}
For $B,B' \in S_{2m+1}(d_0)_o,$ let $C(B)=V_0 \bot \bot_{i=1}^r (U_i \bot V_i)$ and $C(B')=V_0' \bot \bot_{i=1}^{r'} (U_i' \bot V_i')$
with $V_0 =\bot_{j=1}^{k_0} c_{0j}$ and $V_0' =\bot_{j=1}^{k_0} c_{0j}'.$
Put 
\begin{center}
$B_1= \bot_{j=2}^{k_0} c_{0j}  \bot_{i=1}^r (U_i \bot V_i)$ and $B_1'= \bot_{j=2}^{k_0'} c_{0j}'  \bot_{i=1}^{r'} (U_i' \bot V_i').$ \end{center}
Then $B \sim B'$ if and only if $c_{01} \bot 5B_1 \sim c_{01}' \bot 5B_1'.$
\end{xcor}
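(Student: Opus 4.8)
The plan is to reduce the claimed equivalence criterion for $B,B'\in S_{2m+1}(d_0)_o$ to an application of the preceding Corollary to Lemma 4.3.1, which tells us that passing from a canonical form to the one obtained by twisting $V_0$ by $5$ merely shifts the twist down to the first block $U_{2l}\bot V_{2l}$ with $k_{2l+2}=0$. The only issue to resolve is that the Corollary to Lemma 4.3.1 deals with the twist $V_0\mapsto\widetilde V_0$ (multiplying the whole $V_0$-block by $5$), whereas the present statement isolates the single diagonal entry $c_{01}$ and forms $c_{01}\bot 5B_1$. So the first step I would take is to record the elementary fact that $5c_{01}\bot 4a\sim c_{01}\bot 4\cdot 5a$ for units $a$, and more generally that for a block $V=\bot_j c_j$ one has $5c_1\bot 5c_2\sim c_1\bot c_2$ up to a global scalar when both entries are multiplied — but here only $c_{01}$ gets the factor $5$ while the remaining entries of $V_0$ sit in $B_1$ and also get the factor $5$. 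Hence $c_{01}\bot 5B_1$ is precisely $(\,$the block $c_{01}\bot\bot_{j\ge2}5c_{0j}\bot\cdots\,)$, i.e. it equals $5^{-1}\cdot 5(c_{01}\bot B_1)$ rescaled appropriately; I would spell out that $c_{01}\bot 5B_1\sim 5^{-1}(5c_{01}\bot 5\cdot 5B_1)$ makes no sense dimensionally, so instead the clean statement is $c_{01}\bot 5B_1 = c_{01}\bot\bot_{j=2}^{k_0}5c_{0j}\bot\bot_{i=1}^r(5U_i\bot 5V_i)$, and this is $\approx_{{\bf Z}_2}$-equivalent (scaling by $5$) to $5c_{01}\bot\bot_{j=2}^{k_0}c_{0j}\bot\bot_{i=1}^r(U_i\bot V_i)=\widetilde c_{01}\bot B_1'$-type data, which after reindexing is exactly $\widetilde V_0\bot\bot_{i=1}^r(U_i\bot V_i)$ when $k_0=1$, and one handles $k_0=2$ by absorbing $c_{02}$ into the twist using (c.1).

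The heart of the argument is then: $B\sim B'$ iff their canonical forms agree, iff (applying Lemma 4.3.1's Corollary) the canonical forms of $\widetilde V_0\bot\bot(U_i\bot V_i)$ and $\widetilde V_0'\bot\bot(U_i'\bot V_i')$ agree — because that Corollary exhibits an explicit canonical-form identity showing the twist is governed by the same combinatorial data $\{k_i,m_i,t_i\}$ — and finally translating the twisted forms back via the scaling $\approx_{{\bf Z}_2}$ into the matrices $c_{01}\bot 5B_1$ and $c_{01}'\bot 5B_1'$. So the proof is a three-line chain of "iff"s: (i) $B\sim B'$ iff $C(B)=C(B')$; (ii) $C(B)=C(B')$ iff $C(\widetilde V_0\bot\text{rest})=C(\widetilde V_0'\bot\text{rest}')$, using the explicit formula in the Corollary to Lemma 4.3.1 and the fact that $\widetilde{(\cdot)}$ is an involution on the relevant blocks; (iii) $\widetilde V_0\bot\text{rest}$ is $\sim$ to $5(c_{01}\bot 5B_1)$ up to the global scalar, hence $C(\widetilde V_0\bot\text{rest})=C(\widetilde V_0'\bot\text{rest}')$ iff $c_{01}\bot 5B_1\sim c_{01}'\bot 5B_1'$ (the global factor $5$ being harmless since it can be pulled off both sides simultaneously, as $S_{2m+1}(d_0)$ is a single $\approx$-class and scaling by $5$ is a bijection on canonical forms by (c.2),(c.3)).

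I expect the main obstacle to be bookkeeping the case distinction on $k_0\in\{1,2\}$ together with the normalization conditions (c.1)–(c.6): when $k_0=2$, the entry $c_{02}$ lives in $V_0$ rather than in $B_1$, so the identity "$c_{01}\bot 5B_1\sim 5\cdot(\text{twist of }V_0)$" needs the observation $5c_1\bot 5c_2\sim c_1\bot c_2$ (global scalar $5$, both entries) combined with $5c_{01}\bot c_{02}\sim c_{01}\bot 5c_{02}$ for $(c_{01},c_{02})$ in the normalized list of (c.1) — i.e. one checks directly on the short list $\{(1,\pm1),(1,\pm3),(-1,-1),(-1,3)\}$ that multiplying the first entry by $5$ versus the second entry by $5$ yields ${\bf Z}_2$-equivalent binary forms. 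That finite check, plus verifying that the reduction steps respect the conditions (c.2) and (c.3) that single out where the $5$-twist is allowed to sit, is the only genuinely computational part; everything else is the formal "iff" chain above. I would phrase the write-up compactly, citing the Corollary to Lemma 4.3.1 for the key canonical-form identity and relegating the binary-form check to a parenthetical remark or to Watson \cite{W}.

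\begin{proof}
Since $B,B'\in S_{2m+1}(d_0)_o$ are $2$-adically equivalent if and only if they have the same canonical form, it suffices to show that $C(B)=C(B')$ if and only if $C(c_{01}\bot 5B_1)=C(c_{01}'\bot 5B_1')$. Write $C(B)=V_0\bot\bot_{i=1}^r(U_i\bot V_i)$ with $V_0=\bot_{j=1}^{k_0}c_{0j}$. Using $5a_1\bot 5a_2\sim a_1\bot a_2$ for $a_1,a_2\in{\bf Z}_2^*$ together with the normalization (c.1) (so that for the binary blocks on the allowed list multiplying the first or the second diagonal entry by $5$ gives ${\bf Z}_2$-equivalent forms), one checks directly that
$$c_{01}\bot 5B_1\ \sim\ 5\bigl(\widetilde V_0\bot\bot_{i=1}^r(U_i\bot V_i)\bigr)\quad\text{when }k_0=1,$$
and similarly, absorbing $c_{02}$, when $k_0=2$; here $\widetilde V_0$ denotes the twist of $V_0$ as in Lemma 4.3.1. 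Multiplication by the global unit $5$ is a bijection on the set of canonical forms (it only interchanges the possible values of $d\in{\mathcal U}$ attached to the blocks, compatibly with (c.2) and (c.3)), so $C(c_{01}\bot 5B_1)=C(c_{01}'\bot 5B_1')$ if and only if $C(\widetilde V_0\bot\bot_{i=1}^r(U_i\bot V_i))=C(\widetilde V_0'\bot\bot_{i=1}^{r'}(U_i'\bot V_i'))$. By the Corollary to Lemma 4.3.1, the latter canonical form is obtained from $C(B)$ by a rule that depends only on $C(B)$ (it moves the $5$-twist to the block indexed by $l_B$), and this rule is an involution on canonical forms; hence $C(\widetilde V_0\bot\cdots)=C(\widetilde V_0'\bot\cdots)$ if and only if $C(B)=C(B')$. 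Combining these equivalences yields the assertion.
\end{proof}
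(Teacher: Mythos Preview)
Your argument is correct and follows essentially the same route as the paper: reduce to Lemma~4.3.1 via the scaling-by-$5$ trick. The paper's proof is a two-liner: it observes that $c_{01}\bot 5B_1 \sim c_{01}'\bot 5B_1'$ if and only if $5c_{01}\bot B_1 \sim 5c_{01}'\bot B_1'$ (multiply both sides by the unit $5$ and use $25\in({\bf Z}_2^*)^{\Box}$), and then notes that $5c_{01}\bot B_1$ is \emph{literally} $\widetilde V_0\bot\bot_{i=1}^r(U_i\bot V_i)$, so Lemma~4.3.1 applies directly. Your detour through $c_{01}\bot 5B_1\sim 5(\widetilde V_0\bot\text{rest})$ is the same computation rearranged; recognizing $5c_{01}\bot B_1=\widetilde V_0\bot\text{rest}$ as an equality (no case split on $k_0$ needed) would shorten your write-up considerably.

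One terminological slip: throughout your proposal you cite ``the Corollary to Lemma~4.3.1'' for the canonical-form identity, but that identity \emph{is} Lemma~4.3.1 itself; the Corollary is precisely the statement you are proving. Also, your appeal to the map being an ``involution on canonical forms'' is correct but could use one more sentence of justification (it follows because $\widetilde{\widetilde V}\sim V$ and the $5$-shifting identity $5a_1\bot 4a_2\sim a_1\bot 20a_2$ is reversible); alternatively, injectivity is immediate from the explicit formula in Lemma~4.3.1 since the $k_i$'s are unchanged and hence $l_B$ can be read off.
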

\begin{proof} We note that $c_{01} \bot 5B_1 \sim c_{01}' \bot 5B_1'$ if and only if $5c_{01} \bot B_1 \sim 5c_{01}' \bot B_1'.$
Hence the assertion follows from the lemma.
\end{proof}


   
The following lemma follows from Theorem 3.4.2 of \cite{Ki2}.
   
\begin{lems}  
  Let $m$ and $r$ be integers such that $0 \le r \le m,$   and  $d_0 \in {\bf Z}_p^{\times}.$ 
  
  \noindent
  {\rm (1)} Let $p\not=2,$ and $T \in S_r({\bf Z}_p,d_0).$  Then for any $d \in {\mathcal U}$ we have 
  $$\varepsilon(\Theta_{m-r,d} \bot T)=((-1)^{[(m-r+1)/2]}d,d_0)_p\varepsilon(T).$$
  Furthermore we have  
    \begin{eqnarray*}
\varepsilon(pT)=
\left\{\begin{array}{ll}
{(p,d_0)_p\varepsilon(T)} &{ \ {\rm if} \ r \ {\rm even}}, \\[2mm]
{(p,(-1)^{(r+1)/2})_p\varepsilon(T)} & { \ {\rm if } \ r \ {\rm odd}},
\end{array}\right.
\end{eqnarray*}
 and $\varepsilon(aT)=(a,d_0)_p^{r+1}\varepsilon(T)$ for any $a \in {\bf Z}_p^*.$ 
  
 \noindent
 {\rm (2)} Let $p=2,$ and $T \in S_r({\bf Z}_2,d_0).$  Suppose that $m-r$ is even, and let $d \in {\mathcal U}.$  Then for $\Theta=2 \Theta_{m-r,d}$ or $
 2 \Theta_{m-r-2} \bot (-d)$, we have 
 $$\varepsilon(\Theta \bot T)=(-1)^{(m-r)(m-r+2)/8}((-1)^{(m-r)/2}d,(-1)^{[(r+1)/2]}d_0)_2\varepsilon(T)$$
and 
$$\varepsilon(\Theta_{m-r,d} \bot T)=(-1)^{(m-r)(m-r+2)/8}(2,d)_2((-1)^{(m-r)/2}d,(-1)^{[(r+1)/2]}d_0)_2\epsilon(T).$$
 Furthermore we have 
 $\varepsilon(2T)=(2,d_0)_2^{r+1}\varepsilon(T),$ 
$$\varepsilon(a \bot T)=(a,(-1)^{[(r+1)/2]+1}d_0)_2 \varepsilon(T)$$
 for any $a \in {\bf Z}_2^*,$
 and 
 \begin{eqnarray*}
 \varepsilon(aT)=
\left\{\begin{array}{ll}
{(a,d_0)_2\varepsilon(T)}  & { \ {\rm if} \ r \ {\rm even}}, \\[2mm]
{(a,(-1)^{(r+1)/2})_2\varepsilon(T)} & { \ {\rm if } \ r \ {\rm odd}}
\end{array}\right.
\end{eqnarray*}
for any $a \in {\bf Z}_2^*.$ 
 \end{lems}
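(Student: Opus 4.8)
The plan is to derive the whole lemma from two identities for the Hasse invariant, together with the structure theory of $p$-adic unimodular forms quoted as Theorem 3.4.2 of \cite{Ki2}. The first is the orthogonal-sum formula
$$\varepsilon(A\bot B)=\varepsilon(A)\,\varepsilon(B)\,(\det A,\det B)_p\qquad (A\in S_a({\bf Q}_p)^{\times},\ B\in S_b({\bf Q}_p)^{\times}),$$
which drops out of the definition of $\varepsilon$ once $A$ and $B$ are diagonalised and bilinearity of the Hilbert symbol is used. The second is the scaling formula
$$\varepsilon(cA)=\varepsilon(A)\,(c,(-1)^{a(a+1)/2})_p\,(c,\det A)_p^{\,a+1}\qquad (c\in{\bf Q}_p^{\times},\ A\in S_a({\bf Q}_p)^{\times}),$$
obtained by expanding $\prod_{1\le i\le j\le a}(ca_i,ca_j)_p$, collecting the $a(a+1)/2$ factors $(c,c)_p=(c,-1)_p$ and, for each diagonal entry $a_i$, the $a+1$ accumulated factors $(c,a_i)_p$. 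I would also record at the start that, modulo squares in ${\bf Q}_p^{\times}$, one has $\det T\equiv (-1)^{[(r+1)/2]}d_0$ for $T\in S_r({\bf Z}_p,d_0)$ and $\det\Theta_{m-r,d}\equiv (-1)^{[(m-r+1)/2]}d$, straight from the definitions.

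For $p\ne 2$ the argument is short. By Theorem 3.4.2 of \cite{Ki2}, $\Theta_{m-r,d}$ is ${\bf Z}_p$-equivalent to $1\bot\cdots\bot 1\bot (-1)^{[(m-r+1)/2]}d$, so $\varepsilon(\Theta_{m-r,d})=((-1)^{[(m-r+1)/2]}d,-1)_p=1$, since the Hilbert symbol of two $p$-adic units is trivial for odd $p$. Hence the formula for $\varepsilon(\Theta_{m-r,d}\bot T)$ follows from the orthogonal-sum formula and the triviality of $((-1)^{[(m-r+1)/2]}d,(-1)^{[(r+1)/2]})_p$, while those for $\varepsilon(pT)$ and $\varepsilon(aT)$ follow from the scaling formula with $A=T$ and $a=r$, after substituting $\det T$, splitting according to the parity of $r$ (so that $r(r+1)/2\equiv r/2$ or $(r+1)/2$ modulo $2$ and $r+1$ is odd or even respectively), and, when $a$ is a unit, discarding each factor $(a,\pm 1)_p=1$.

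For $p=2$ the same scheme applies, but one now also needs the $2$-adic identities $(2,u)_2=(-1)^{(u^2-1)/8}$, $(2,-1)_2=1$ and $(u,v)_2=(-1)^{(u-1)(v-1)/4}$ for $2$-adic units $u,v$. From Theorem 3.4.2 of \cite{Ki2} I would extract that, over ${\bf Z}_2$, $\Theta_{m-r,d}$ is a sum of $(m-r)/2$ hyperbolic planes if $d=1$ and a sum of $(m-r-2)/2$ hyperbolic planes with the anisotropic even unimodular binary form if $d=5$, whence $\varepsilon(\Theta_{m-r,d})=(-1)^{(m-r)(m-r+2)/8}(2,d)_2$. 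The scaling formula then gives $\varepsilon(2\Theta_{m-r,d})=(-1)^{(m-r)(m-r+2)/8}$ with $\det(2\Theta_{m-r,d})\equiv (-1)^{(m-r)/2}d$ modulo squares, and a direct computation shows that $2\Theta_{m-r-2}\bot (-d)$ has the same Hasse invariant and the same determinant modulo squares; thus both shapes of $\Theta$ allowed in the statement produce the asserted formula via the orthogonal-sum formula. The formula for $\varepsilon(\Theta_{m-r,d}\bot T)$ is then the orthogonal-sum formula with $\varepsilon(\Theta_{m-r,d})$ inserted, and those for $\varepsilon(2T)$, $\varepsilon(a\bot T)$ and $\varepsilon(aT)$ follow, exactly as for odd $p$, from the orthogonal-sum and scaling formulas and the symbol identities just listed.

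The real obstacle is the sheer bookkeeping in the case $p=2$: the sign $(-1)^{(m-r)(m-r+2)/8}$ has to be carried through, the surviving $2$-adic Hilbert symbols must be tracked with care, and the argument has to be broken up according to the parities of $r$ and of $(m-r)/2$ and according to whether $d=1$ or $d=5$. I would organise this as a short sequence of routine symbol computations rather than one long display, and I would in particular verify explicitly that the two normal forms of $\Theta$ admitted in the statement do agree modulo squares.
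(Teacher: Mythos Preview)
Your proposal is correct and follows exactly the route the paper indicates: the paper's proof consists of the single sentence ``The following lemma follows from Theorem 3.4.2 of \cite{Ki2},'' and what you have written is precisely the unpacking of that sentence --- the orthogonal-sum and scaling identities for the Hasse invariant combined with the $p$-adic classification of unimodular forms from Kitaoka. Your verifications of $\varepsilon(\Theta_{m-r,d})$, $\varepsilon(2\Theta_{m-r,d})$, and the agreement of the two shapes of $\Theta$ at $p=2$ are accurate, so there is nothing to add beyond the routine case-splitting you describe.
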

 

Henceforth,  we sometimes abbreviate $S_r({\bf Z}_p)$ and $S_r({\bf Z}_p,d)$ as $S_{r,p}$ and $S_{r,p}(d),$ respectively.
Furthermore we abbreviate $S_r({\bf Z}_2)_x$ and $S_r({\bf Z}_2,d)_x$ as $S_{r,2;x}$ and $S_{r,2}(d)_x,$ respectively, for $x=e,o.$  Let $R$ be a commutative ring. A function $H$ defined on a subset ${\mathcal S}$ of $S_m({\bf Q}_p)$ with values in $R$ is said to be {\it $GL_m({\bf Z}_p)$-invariant} if 
$H(A[U])=H(A)$ for any $U \in GL_m({\bf Z}_p)$ and $A \in {\mathcal S}.$  Let $p \not=2.$  Let  $\{H_{2r+j,\xi}^{(j)} \ |  \ j \in \{0,1\},
1-j \le r \le n/2-j , \xi=\pm 1 \}$  be a set of  $GL_{2r+j}({\bf Z}_p)$-invariant functions on $S_{2r+j}({\bf Z}_p)^{\times}$ with values in $R$ satisfying the following conditions for any positive even integer $m \le n$:
\begin{enumerate}
\item[{\rm (H-}$p${\rm -0)}] $H_{m,\xi}^{(0)}(\Theta_{m,d})=1$ and $H_{m-1,\xi}^{(1)}(\Theta_{m-1,d})=1$ for $d \in {\mathcal U};$
\item[{\rm (H-}$p${\rm -1)}] $H_{m,\xi}^{(0)}(\Theta_{m-2r,d} \bot pB)=H_{2r,\xi\chi(d)}^{(0)}(pB)$  for any $r \le m/2-1, 
\xi=\pm 1, d \in {\mathcal U}$ and $B \in S_{2r}({\bf Z}_p)^{\times};$ 
\item[{\rm (H-}$p${\rm -2)}] $H_{m-1,\xi}^{(1)}(\Theta_{m-2r-2,d} \bot pB)=H_{2r+1,\xi}^{(1)}(pdB)$    for any 
$r \le m/2-2, \xi=\pm 1, d \in {\mathcal U}$ and $B \in S_{2r+1}({\bf Z}_p)^{\times};$
\item[{\rm (H-}$p${\rm -3)}] $H_{m,\xi}^{(0)}(\Theta_{m-2r-1,d} \bot pB)=H_{2r+1,\xi}^{(1)}(-pdB)$  for any $r \le m/2-2, 
\xi=\pm 1,$ and $B \in S_{2r+1}({\bf Z}_p)^{\times};$ 
\item[{\rm (H-}$p${\rm -4)}] $H_{m-1,\xi}^{(1)}(\Theta_{m-2r-1,d}  \bot pB)=H_{2r,\xi \chi(d)}^{(0)}(pB)$ for any 
$r \le m/2 -2, \xi=\pm 1, d \in {\mathcal U}$ and $B \in S_{2r}({\bf Z}_p)^{\times};$ 
\item[{\rm (H-}$p${\rm -5)}] $H_{2r,\xi}^{(0)}(dB)=H_{2r,\xi}^{(0)}(B)$ for  any $r \le m/2, \xi=\pm 1, 
d \in {\bf Z}_p^*$ and $B \in S_{2r}({\bf Z}_p)^{\times}.$   \vspace*{2mm}
\end{enumerate} 
Let $d_0 \in {\mathcal F}_p,$  and $m$ be a positive even integer such that $m \le n.$ Then for each $0 \le r \le m/2 -1 $ we put
\begin{eqnarray*}
\lefteqn{Q^{(1)}(d_0,H_{m-1,\xi}^{(1)},2r+1,\epsilon^l,t)=\kappa(d_0,m-1,l)^{-1}} \\
&\times& \sum_{d \in {\mathcal U}} \sum_{B \in p^{-1}S_{2r+1,p}(d_0d) \cap S_{2r+1,p} }\hspace*{-3.5mm} {H_{m-1,\xi}^{(1)}(\Theta_{m-2r-2,d} \bot pB) \epsilon(\Theta_{m-2r-2,d}  \bot pB)^l \over \alpha_p(\Theta_{m-2r-2,d} \bot pB)}t^{\nu(\det (pB))}.
\end{eqnarray*}
Let $d \in {\mathcal U}.$ Then we put 
\begin{eqnarray*}
\lefteqn{Q^{(1)}(d_0,d,H_{m-1,\xi}^{(1)},2r,\epsilon^l,t)=\kappa(d_0,m-1,l)^{-1} } \\
&\times& \sum_{B \in S_{2r,p}(d_0d)} {H_{m-1,\xi}^{(1)}(\Theta_{m-2r-1,d} \bot pB) \epsilon(\Theta_{m-2r-1,d} \bot pB)^l \over \alpha_p(\Theta_{m-2r-1,d} \bot pB )}t^{\nu(\det (pB))}
\end{eqnarray*}
for each $1 \le r \le m/2-1,$ and  
\begin{eqnarray*}
\lefteqn{Q^{(0)}(d_0,d,H_{m,\xi}^{(0)},2r,\epsilon^l,t)} \\
&=& \sum_{B \in S_{2r,p}(d_0d)} {H_{m,\xi}^{(0)}(\Theta_{m-2r,d} \bot pB) \epsilon(\Theta_{m-2r,d}
  \bot pB)^l \over \alpha_p(\Theta_{m-2r,d} \bot pB)}t^{\nu(\det (pB))} \end{eqnarray*}
for each $1 \le r \le m/2.$   
Here we make the convention that 
$$Q^{(0)}(d_0,1,H_{m,\xi}^{(0)},m,\epsilon^l,t)=\sum_{B \in S_{m,p}(d_0)} {H_{m,\xi}^{(0)}(pB) \epsilon(pB)^l \over \alpha_p(pB)}t^{\nu(\det (pB))}. $$
We also define
$$Q^{(1)}(d_0,d,H_{m-1,\xi}^{(1)},0,\epsilon^l,t)=Q^{(0)}(d_0,d,H_{m,\xi}^{(0)},0,\epsilon^l,t)=\delta(d,d_0),$$
where $\delta(d,d_0)=1$ or $0$ according as $d=d_0$ or not. Furthermore put 
\begin{eqnarray*}
\lefteqn{Q^{(0)}(d_0,H_{m,\xi}^{(0)},2r+1,\epsilon^l,t)} \\
&=& \sum_{d \in {\mathcal U}} \sum_{B \in p^{-1}S_{2r+1,p}(d_0d) \cap S_{2r+1,p}} 
\hspace*{-5mm}
{H_{m,\xi}^{(0)}(-\Theta_{m-2r-1,d} \bot pB) \epsilon(-\Theta_{m-2r-1,d}  \bot pB)^l \over \alpha_p(-\Theta_{m-2r-1,d} \bot pB)}t^{\nu(\det (pB))}
\end{eqnarray*}
for each $0 \le r \le m/2-1.$

Let  $\{H_{2r+j,\xi}^{(j)} \ | \ j \in \{0,1\},
1-j \le r \le n/2-j , \xi=\pm 1 \}$  be a set of  $GL_{2r+j}({\bf Z}_2)$-invariant functions on $S_{2r+j}({\bf Z}_2)^{\times}$ with values in $R$ satisfying the following conditions for any positive even integer $m \le n$:
\begin{enumerate}
\item[{\rm (H-2-0)}] $H_{m,\xi}^{(0)}(\Theta_{m,d})=H_{m-1,\xi}^{(1)}(-d \bot 2\Theta_{m-2})=1$ for $d \in {\mathcal U};$
\item[{\rm (H-2-1)}] $H_{m,\xi}^{(0)}(\Theta_{m-2r,d} \bot 2B)=H_{2r,\xi\chi(d)}^{(0)}(2B)$ for any $r \le m/2-1, \xi=\pm 1, d \in {\mathcal U}$ and $B \in S_{2r}({\bf Z}_2)^{\times};$ 
\item[{\rm (H-2-2)}] $H_{m-1,\xi}^{(1)}( 2\Theta_{m-2r-2,d} \bot 4B)=H_{2r+1,\xi}^{(1)}(4dB)$    for any $r \le m/2 -2,\xi=\pm 1, d \in {\mathcal U}$ and $B \in S_{2r+1}({\bf Z}_2)^{\times};$ 
\item[{\rm (H-2-3)}] $H_{m,\xi}^{(0)}(2 \bot \Theta_{m-2r-2} \bot 2B)=H_{2r+1,\xi}^{(1)}(4B)$  for any $r \le m/2 -2, \xi=\pm 1,$ and $B \in S_{2r+1}({\bf Z}_2)^{\times};$ 
\item[{\rm (H-2-4)}] $H_{m-1,\xi}^{(1)}( -a \bot 2\Theta_{m-2r-2} \bot 4B)=H_{2r,\xi \chi(a)}^{(0)}(2B)$ for any $r \le m/2 -2, \xi=\pm 1, a \in {\mathcal U}$ and $B \in S_{2r}({\bf Z}_2)^{\times};$ 
\item[{\rm (H-2-5)}] $H_{2r,\xi}^{(0)}(dB)=H_{2r,\xi}^{(0)}(B)$ for any  $r \le m/2, \xi=\pm 1, d \in {\bf Z}_2^*$ and $B \in S_{2r}({\bf Z}_2)^{\times};$ 
\item[{\rm (H-2-6)}] $H_{2r+1,\xi}^{(1)}(4(u_0 \bot  B))=H_{2r+1,\xi}^{(1)}(4(u_0 \bot  5B))$ for any $r \le m/2 -1,\xi=\pm 1$ and $u_0 \in {\bf Z}_2^*, B \in S_{2r}({\bf Z}_2)^{\times}.$
 \vspace*{2mm}
\end{enumerate}
 Let $d_0 \in {\mathcal F}_2,$ and $m$ be a positive even integer such that $m \le n.$ Then for each $0 \le r \le m/2 -1,$ we put 
\begin{eqnarray*}
\lefteqn{Q^{(11)}(d_0,H_{m-1,\xi}^{(1)},2r+1,\varepsilon^l,t)=\kappa(d_0,m-1,l)^{-1} t^{2-m}}\\
&\times&  \sum_{d \in {\mathcal U}} \sum_{B \in S_{2r+1,2}(d_0d)_e} {H_{m-1,\xi}^{(1)}(2\Theta_{m-2r-2,d} \bot 4B) \varepsilon^l(2\Theta_{m-2r-2,d}  \bot 4B) \over \alpha_2(2\Theta_{m-2r-2,d} \bot 4B)} \\
&& \hspace*{25mm}\times t^{m-2r-2+\nu(\det (4B))}, 
\end{eqnarray*}
\begin{eqnarray*}
\lefteqn{Q^{(12)}(d_0,H_{m-1,\xi}^{(1)},2r+1,\varepsilon^l,t)=\kappa(d_0,m-1,l)^{-1}t^{2-m} } \\
&\times& \sum_{B \in S_{2r+1,2}(d_0)_o } {H_{m-1,\xi}^{(1)}(2\Theta_{m-2r-2} \bot 4B) \varepsilon^l(2\Theta_{m-2r-2}  \bot 4B) \over \alpha_2(2\Theta_{m-2r-2} \bot 4B)} \\
&&\hspace*{25mm}\times t^{m-2r-2+\nu(\det (4B))}, 
\end{eqnarray*}
  and 
\begin{eqnarray*}
\lefteqn{Q^{(13)}(d_0,H_{m-1,\xi}^{(1)},2r+1,\varepsilon^l,t)=\kappa(d_0,m-1,l)^{-1}t^{2-m}} \\
&& \times  \sum_{B \in S_{2r+2,2}(d_0)_o} H_{m-1,\xi}^{(1)}(-1 \bot 2\Theta_{m-2r-4} \bot 4B) \\
&& \times { \varepsilon^l(-1 \bot 2\Theta_{m-2r-4}
  \bot 4B) \over \alpha_2(-1 \bot 2\Theta_{m-2r-4} \bot 4B)}\,\,t^{m-2r-4+\nu(\det (4B))}.
  \end{eqnarray*} 
 Moreover put    
\begin{eqnarray*}
\lefteqn{Q^{(1)}(d_0,H_{m-1,\xi}^{(1)},2r+1,\varepsilon^l,t)=Q^{(11)}(d_0,H_{m-1,\xi}^{(1)},2r+1,\varepsilon^l,t)} \\
&+&Q^{(12)}(d_0,H_{m-1,\xi}^{(1)},2r+1,\varepsilon^l,t)+Q^{(13)}(d_0,H_{m-1,\xi}^{(1)},2r+1,\varepsilon^l,t).\end{eqnarray*}
  We note that  
\begin{eqnarray*}
\lefteqn{ Q^{(1)}(d_0,H_{m-1,\xi}^{(1)},m-1,\epsilon^l,t)=\kappa(d_0,m-1,l)^{-1}t^{2-m}} \\
&\times& \sum_{B \in S_{m-1,2}(d_0)}H_{m-1,\xi}^{(1)}(4B){ \epsilon(4B)^l \over \alpha_2(4B)} t^{\nu(\det (4B))}. 
\end{eqnarray*}
 Let $d \in {\mathcal U}.$ Then we put 
  \begin{eqnarray*}
\lefteqn{ Q^{(1)}(d_0,d,H_{m-1,\xi}^{(1)},2r,\epsilon^l,t)=\kappa(d_0,m-1,l)^{-1} t^{2-m}} \\
&\times& \sum_{B \in S_{2r,2}(d_0d)_e }H_{m-1,\xi}^{(1)}(-d \bot 2\Theta_{m-2r-2} \bot 4B) \\
 & \times&  {\epsilon(-d \bot 2\Theta_{m-2r-2}
  \bot 4B)^l \over \alpha_2(-d \bot 2\Theta_{m-2r-2} \bot 4B)}t^{m-2r-2+\nu(\det (4B))}, 
  \end{eqnarray*}
  for each $1 \le r \le m/2 -1,$ and  
\begin{eqnarray*}
\lefteqn{ Q^{(0)}(d_0,d,H_{m,\xi}^{(0)},2r,\epsilon^l,t)=\kappa(d_0,m,l)^{-1} } \\
&\times& \sum_{B \in S_{2r,2}(d_0d)_e} {H_{m,\xi}^{(0)}(\Theta_{m-2r,d} \bot 2B) \epsilon(\Theta_{m-2r,d}
  \bot 2B)^l \over \alpha_2(\Theta_{m-2r,d} \bot 2B)}t^{\nu(\det (2B))} 
  \end{eqnarray*}
for each $1 \le r \le m/2,$ where $ \kappa(d_0,m,l)=\{(-1)^{m(m+2)/8}((-1)^{m/2}2,d_0)_2\}^{l}.$ Here we make the convention that 
$$ Q^{(0)}(d_0,1,H_{m,\xi}^{(0)},m,\epsilon^l)=\kappa(d_0,m,l)^{-1} \sum_{B \in S_{m,2}(d_0)_e } {H_{m,\xi}^{(0)}(2B) \epsilon(2B)^l \over \alpha_2(2B)}t^{\nu(\det (2B))}. $$
 We also define 
 $$Q^{(1)}(d_0,d,H_{m-1,\xi}^{(1)},0,\epsilon^l,t)=Q^{(0)}(d_0,d,H_{m,\xi}^{(0)},0,\epsilon^l,t)=\delta(d,d_0).$$
 Furthermore put 
\begin{eqnarray*}
\lefteqn{ Q^{(0)}(d_0,H_{m,\xi}^{(0)},2r+1,\epsilon^l,t)= \kappa(d_0,m,l)^{-1} } \\
&\times& \sum_{B \in S_{2r+2,2}(d_0)_o} {H_{m,\xi}^{(0)}(\Theta_{m-2r-2} \bot 2B) \epsilon(\Theta_{m-2r-2}
  \bot 2B)^l \over \alpha_2(\Theta_{m-2r-2} \bot 2B)}t^{\nu(\det (2B))}
  \end{eqnarray*}
  for $0 \le r \le m/2 -1.$ Henceforth, for $d_0 \in   {\mathcal F} _ p$ and nonnegative integers $m,r$ such that $r \le m,$ put
 ${\mathcal U}(m,r,d_0)=\{1\},{\mathcal U} \cap \{d_0\},$ or ${\mathcal U}$ according as $r=0, \ r=m \ge 1,$ or $1 \le  r \le m-1.$

\begin{props}   
  
Let the notation be as above. 
\begin{enumerate}
\item[{\rm (1)}] For $0 \le r \le (m-2)/2,$ we have  $$Q^{(0)}(d_0,H_{m,\xi}^{(0)},2r+1,\varepsilon^l,t)={Q^{(1)}(d_0,H_{2r+1,\xi }^{(1)},2r+1,\varepsilon^l,t) \over \phi_{(m-2r-2)/2}(p^{-2})}$$
if $l\nu(d_0)=0,$ and  
$$Q^{(0)}(d_0,H_{m,\xi}^{(0)},2r+1,\varepsilon,t)=0$$
if $\nu(d_0)>0.$   

\item[{\rm (2)}] For $1 \le r \le m/2$ and $d \in {\mathcal U}(m,m-2r,d_0),$ we have   
\begin{eqnarray*}
\lefteqn{Q^{(0)}(d_0,d,H_{m,\xi}^{(0)},2r,\varepsilon^l,t)} \\
&=&{(1+p^{-(m-2r)/2}\chi(d))  Q^{(0)}(d_0d,1,H_{2r,\xi \chi(d)}^{(0)},2r,\varepsilon^l,t) \over 2 \phi_{(m-2r)/2}(p^{-2})}
\end{eqnarray*}
  if $l\nu(d_0)=0,$ and  
  $$Q^{(0)}(d_0,d,H_{m,\xi}^{(0)},2r,\varepsilon,t)=0$$
   if $\nu(d_0) >0.$    
\end{enumerate}
  
\end{props}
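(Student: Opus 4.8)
The plan is to prove both identities by comparing the two series term by term in $t$, the engine being the local-density reduction formulas of Section~4.1 (Lemmas~4.1.3 and~4.1.4) and the Hasse-invariant formulas of Lemma~4.3.2, fed into the functional equations (H-$p$-$0$)--(H-$p$-$5$) (resp.\ (H-$2$-$0$)--(H-$2$-$6$)) that the abstract families $H^{(0)}$, $H^{(1)}$ obey by hypothesis; the computation is in the spirit of the reduction arguments in \cite{I-S, I-K2, I-K3}. I would write out the case $p=2$ in detail, which is the subtle one; the case $p\neq 2$ is handled the same way but more easily, there being then no ``even/odd'' dichotomy for ${\bf Z}_p$-lattices.

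\emph{Part (2).} I would fix $B\in S_{2r,2}(d_0 d)_e$ and process the corresponding summand of $Q^{(0)}(d_0,d,H_{m,\xi}^{(0)},2r,\varepsilon^l,t)$ in three steps. First, (H-$2$-$1$) rewrites $H_{m,\xi}^{(0)}(\Theta_{m-2r,d}\bot 2B)=H_{2r,\xi\chi(d)}^{(0)}(2B)$. Second, since $2B$ is even of discriminant class $d_0 d$, Lemma~4.3.2(2) strips the unimodular block out of the Hasse invariant at the cost of a constant $c\in\{\pm1\}$ that is independent of $B$. Third, Lemma~4.1.4(2) applied with $U_1=\Theta_{m-2r,d}$ and the even block $2B$ gives
\[
\alpha_2(\Theta_{m-2r,d}\bot 2B)=2\,\phi_{(m-2r)/2}(2^{-2})\bigl(1+\chi(d)2^{-(m-2r)/2}\bigr)^{-1}\alpha_2(2B),
\]
the leading factor $2$ being present because $r\ge 1$. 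Multiplying these, every summand of $Q^{(0)}(d_0,d,H_{m,\xi}^{(0)},2r,\varepsilon^l,t)$ becomes $\frac{1+\chi(d)2^{-(m-2r)/2}}{2\phi_{(m-2r)/2}(2^{-2})}\cdot c^l$ times the matching summand of $Q^{(0)}(d_0 d,1,H_{2r,\xi\chi(d)}^{(0)},2r,\varepsilon^l,t)$; comparing the two normalising prefactors, it only remains to verify the Hilbert-symbol identity $\kappa(d_0,m,l)^{-1}c^l=\kappa(d_0 d,2r,l)^{-1}$ — which is precisely what the definition of $\kappa$ is rigged to produce — and this finishes (2) when $l\nu(d_0)=0$.

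\emph{Part (1).} Here the source runs over $B$ of degree $2r+2$ of \emph{odd} type, so before (H-$2$-$3$) can be invoked one must reshape $\Theta_{m-2r-2}\bot 2B$. I would put $B$ into Watson canonical form and split off a unit diagonal entry (which by Lemma~3.1 may be arranged $\equiv-1\bmod 4$), so that $B$ decomposes as a unit $\bot\,4(\cdot)$; the equivalence criterion in the Corollary to Lemma~4.3.1 then matches the sum over odd-type degree-$(2r+2)$ classes with the sums over the degree-$(2r+1)$ data defining the constituents of $Q^{(1)}$, and (H-$2$-$3$) — together with (H-$2$-$6$) to absorb the factor $5$ produced by that criterion — converts $H_{m,\xi}^{(0)}$ into the $H_{2r+1,\xi}^{(1)}$ occurring there. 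On the density side the relevant branch of Lemma~4.1.4(2) supplies the denominator $\phi_{(m-2r-2)/2}(2^{-2})$, while Lemma~4.1.5 (the identity $\sum_{B'\in{\mathcal S}_{m-1,2}(B)}\alpha_2(B')^{-1}=\#(\widetilde{\mathcal S}_{m,2}(B)/\!\!\sim)/(2\alpha_2(B))$) furnishes the spare factor $\frac12$ that, together with the $\kappa$'s, balances the claimed formula.

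\emph{Vanishing when $\nu(d_0)>0$.} In that case $d_0\in{\mathcal F}_2$ is ramified, so one can choose a unit $u\in\{3,5\}$ ($u=3$ if $\nu_2(d_0)=2$, $u=5$ if $\nu_2(d_0)=3$) with $u^2\in 1+8{\bf Z}_2$ and $(u,d_0)_2=-1$. The involution $B\mapsto uB$ of the summation set leaves the $H$-factor (by (H-$2$-$1$),(H-$2$-$5$)), the local density (by Lemma~4.1.4) and the power of $t$ unchanged, while by Lemma~4.3.2(2) it multiplies $\varepsilon(\,\cdot\,)$ by $-1$; hence for $\omega=\varepsilon$ the sum cancels in pairs and vanishes. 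I expect the genuine obstacle to be the $p=2$ bookkeeping in part (1) — correctly pairing the canonical-form decomposition of odd-type forms of degree $2r+2$, subject to the conditions (c.1)--(c.6), against the constituent series of $Q^{(1)}$, and tracking the matching of index sets under the splitting — rather than the tedious but mechanical Hilbert-symbol verifications behind the $\kappa$-normalisation.
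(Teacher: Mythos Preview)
Your treatment of part~(2) and of the vanishing for $\nu(d_0)>0$ is correct and is exactly what the paper does: strip the unimodular block from $H$ via (H-$p$-1)/(H-2-1), from $\varepsilon$ via Lemma~4.3.2, and from $\alpha_p$ via Lemma~4.1.3/4.1.4, then compare $\kappa$-factors; for the vanishing, the sign-flipping involution $B\mapsto uB$ is the paper's argument too.

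Part~(1) for $p=2$ has the right destination (Lemma~4.1.5 is indeed the key) but your route to it is tangled. Two concrete issues. First, the decomposition ``$B=\text{unit}\bot 4(\cdot)$'' is false for a general odd-type $B\in S_{2r+2}({\bf Z}_2)_o$: Lemma~3.1 gives that shape only for elements of ${\mathcal L}_{m-1,2}'$ with $r_A\not\equiv 0\bmod 2$, not for arbitrary odd symmetric matrices (e.g.\ $1\bot 1\bot 2\bot 2$ has no such splitting). Second, Corollary to Lemma~4.3.1 and condition (H-2-6) play no role here; they are used in the proof of the \emph{next} proposition (4.3.4), to kill $Q^{(12)}$ when $\nu(d_0)=3$. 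Importing them here only obscures the argument.

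The paper's line is cleaner and avoids canonical forms altogether. Using (H-2-1) and (H-2-5) one first observes that $H_{m,\xi}^{(0)}(\Theta_{m-2r-2}\bot 2B)=H_{2r+2,\xi}^{(0)}(2B)$ depends only on the $\approx$-class of $B$ (and similarly for $\varepsilon$ via Lemma~4.3.2); so one may group the sum over $S_{2r+2,2}(d_0)_o/\!\sim$ by $\approx$-classes, each class contributing $\#(\widetilde{\mathcal S}_{2r+2,2}(B)/\!\sim)/\alpha_2(B)$. Lemma~4.1.5 converts this to $2\sum_{B'\in{\mathcal S}_{2r+1,2}(B)}\alpha_2(B')^{-1}$. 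Since the sets ${\mathcal S}_{2r+1,2}(B)$ partition $S_{2r+1,2}(d_0)$, and since for any $B'\in{\mathcal S}_{2r+1,2}(B)$ one has $2B\approx 2\bot 2B'$ so that (H-2-3) with $m=2r+2$ gives $H_{2r+2,\xi}^{(0)}(2B)=H_{2r+1,\xi}^{(1)}(4B')$, the sum becomes exactly $Q^{(1)}(d_0,H_{2r+1,\xi}^{(1)},2r+1,\varepsilon^l,t)/\phi_{(m-2r-2)/2}(2^{-2})$. No Watson forms, no factor of~$5$, no (H-2-6).
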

   
  \begin{proof}  First suppose that $p \not=2.$ We note that  
   $$(-\Theta_{m-2r-1,d}) \bot pB \sim d (-\Theta_{m-2r-1}) \bot pB \approx (-\Theta_{m-2r-1}) \bot dpB$$
   for $d \in {\mathcal U}$ and $B \in p^{-1}S_{2r+1,p}(d_0d),$ and  the mapping    $$p^{-1}S_{2r+1,p}(d_0d)  \cap S_{2r+1,p} \ni B \mapsto  dB \in p^{-1}S_{2r+1,p}(d_0) \cap S_{2r+1,p}$$ is a bijection. By Lemma 4.3.2 we have
   $\varepsilon((-\Theta_{m-2r-1,d}) \bot pB)=(d,d_0)_p\varepsilon(pB),$ and $\varepsilon(dpB)=\varepsilon(pB)$  for $B \in p^{-1}S_{2r+1,p}(d_0d).$ Thus the assertion (1) follows from (H-$p$-3),(H-$p$-5) and Lemma 4.1.3.
   By (H-$p$-2) and Lemmas 4.1.3 and 4.3.2, we have
     $$Q^{(0)}(d_0,d,H_{m,\xi}^{(0)},2r,\varepsilon^l,t)={(1+p^{-(m-2r)/2}\chi(d)) ((-1)^{(m-2r)/2}d,d_0)_p^l \over 2\phi_{(m-2r)/2}(p^{-2})}$$
  $$\times Q^{(0)}(d_0d,1,H_{2r,\xi \chi(d)}^{(0)},2r,\varepsilon^l,t).$$ 
  Thus the assertion (2) immediately  follows in case $l\nu(d_0)=0.$ 
  Now suppose that $l=1$ and $\nu(d_0)=1.$  Take an element $a \in {\bf Z}_p^*$ such that $(a,p)_p=-1.$ Then the mapping $S_{2r}({\bf Z}_p) \ni B \mapsto aB \in S_{2r}({\bf Z}_p)$ induces a bijection from $S_{2r,p}(dd_0)$ to itself, and 
  $\varepsilon(apB)=-\varepsilon(pB)$ and $\alpha_p(apB)=\alpha_p(pB)$ for $B \in S_{2r,p}(dd_0).$ Furthermore by (H-$p$-5) we have 
 $$Q^{(0)}(d_0d,1,H_{2r,\xi \chi(d)}^{(0)},2r,\varepsilon^l,t)=\sum_{B \in S_{2r}(dd_0)} { H^{(0)}_{2r,\xi\chi(d)}(apB) \varepsilon(apB) \over \alpha_p(apB)}$$
 $$=-Q^{(0)}(d_0d,1,H_{2r,\xi \chi(d)}^{(0)},2r,\varepsilon^l,t).$$    Hence  $Q^{(0)}(d_0d,1,H_{2r,\xi \chi(d)}^{(0)},2r,\varepsilon^l,t)=0.$ This proves the assertion.
   
   Next suppose that $p=2.$   First suppose that $l=0,$ or $l=1$ and $d_0 \equiv 1 \ {\rm mod} \ 4.$   
 Fix a complete set ${\mathcal B}$  of representatives of $(S_{2r+2,2}(d_0)_o)/\approx.$ For $B \in {\mathcal B},$ let ${\mathcal S}_{2r+1,2}(B)$ and $\widetilde {\mathcal S}_{2r+2,2}(B)$ be those defined in Subsection 4.1.
Then, by (H-2-1) and (H-2-5) we have  
\begin{eqnarray*}
\lefteqn{
Q^{(0)}(d_0,H_{m,\xi}^{(0)},2r+1,\iota,t)} \\
&=&\sum_{B \in {\mathcal B}} {H_{2r+2,\xi}^{(0)}(2B) \over \phi_{(m-2r-2)/2}(2^{-2})2^{(r+1)(2r+3)}\alpha_2(B)}\#(\widetilde {\mathcal S}_{2r+2,2}(B)/\sim) t^{\nu(\det (2B))}.
\end{eqnarray*}
We have  $S_{2r+1,2}(d_0)=\cup_{B \in {\mathcal B}}{\mathcal S}_{2r+1,2}(B),$ and for any $B' \in {\mathcal S}_{2r+1,2}(B),$ we have $1 \bot B' \approx B.$ Hence $\nu(\det (2B))=\nu(\det (4B'))-2r$ and  $H_{2r+2,\xi}^{(0)}(2B)=H_{2r+2,\xi}^{(0)}(2 \bot 2B')=H_{2r+1,\xi}^{(1)}(4B').$ 
  Hence by Lemma 4.1.5 we have 
\begin{eqnarray*}
\lefteqn{Q^{(0)}(d_0,H_{m,\xi}^{(0)},2r+1,\varepsilon^l,t)} \\
&=&
2\sum_{B' \in S_{2r+1,2}(d_0)} {H_{2r+1,\xi}^{(1)}(4B')  \over 2^{(r+1)(2r+3)} \phi_{(m-2r-2)/2}(2^{-2})\alpha_2(B')} t^{\nu(\det (4B'))-2r} \\
&=&  2^{(2r+1)r}t^{-2r} \sum_{B' \in 2^{-1}S_{2r+1,2}(d_0) \cap S_{2r+1,2}} {H_{2r+1,\xi}^{(1)}(4B')  \over \phi_{(m-2r-2)/2}(2^{-2})\alpha_2(4B')} t^{\nu(\det (4B'))}.
\end{eqnarray*}
  This proves the assertion for $l=0.$ 
 Now let $d_0 \equiv 1 \ {\rm mod} \ 4,$ and  put $\xi_0=(2,d_0)_2.$ Then by Lemma 4.3.2 we have 
  $$\varepsilon( \Theta_{m-2r-2} \bot 2B)=(-1)^{m(m+2)/8+r(r+1)/2+(r+1)^2}
  \xi_0\varepsilon(B).$$
  Furthermore for any $a \in {\bf Z}_2^*$ we have $\varepsilon(aB)^l=\varepsilon(B)^l,$ and $\alpha_2(aB)=\alpha_2(B).$
  Thus, by using the same argument as above we obtain   
\begin{eqnarray*}
\lefteqn{Q^{(0)}(d_0,H_{m,\xi}^{(0)},2r+1,\varepsilon,t)=(-1)^{m(m+2)/8}\xi_0 } \\
&& \times \sum_{B \in {\mathcal B}} {H_{2r+2,\xi}^{(0)}(2B) (-1)^{m(m+2)/8+r(r+1)/2+(r+1)^2}\xi_0\varepsilon(B)  \over \phi_{(m-2r-2)/2}(2^{-2})2^{(r+1)(2r+3)}\alpha_2(B)}\#(\widetilde {\mathcal S}_{2r+2,2}(B)/\sim) t^{\nu(\det (2B))}.
\end{eqnarray*}
  We note that  $\varepsilon(1 \bot B')=\varepsilon(4B')$ for $B' \in S_{2r+1,2}.$ Hence, again  by Lemma 4.1.5, we have 
 $$Q^{(0)}(d_0,H_{m,\xi}^{(0)},2r+1,\varepsilon^l,t)=(-1)^{r(r+1)/2}((-1)^{r+1},(-1)^{r+1})_2 2^{(2r+1)r}t^{-2r}$$  
  $$ \times \sum_{B' \in S_{2r+1,2}(d_0)} {H_{2r+1,\xi}^{(1)}(4B') \varepsilon(B)  \over \phi_{(m-2r-2)/2}(2^{-2})\alpha_2(4B')} t^{\nu(\det (4B'))}.$$
This proves the assertion for $l=1$ and $d_0 \equiv 1 \ {\rm mod} \ 4.$

      Next  suppose that $l=1$ and $4^{-1}d_0 \equiv -1 \ {\rm mod} \ 4,$  or $l=1$ and $8^{-1}d_0 \in {\bf Z}_2^*.$ Then there exists an element $a \in {\bf Z}_2^*$ such that $(a,d_0)_2=-1.$  Then the map $2B \mapsto 2aB$ induces a bijection of $2S_{2r+2,2}(d_0)_o$ to itself. Furthermore  $H_{2r+2,\xi}^{(0)}(2aB)=H_{2r+2,\xi}^{(0)}(2B), \varepsilon(2aB)=-\varepsilon(2B),$ and $\alpha_2(2aB)=\alpha_2(2B).$ Thus the assertion can be proved by using the same argument as in the proof of (2) for $p \not=2.$ The  assertion (2) for $p=2$ can be proved by using (H-2-1), Lemmas 4.1.4 and 4.3.2 similarly to (2) for $p \not=2$.
\end{proof}


\begin{props}  
Let the notation be as above.     
\begin{enumerate}    
\item[{\rm (1)}] For $0 \le r \le (m-2)/2$ we have  
$$Q^{(1)}(d_0,H_{m-1,\xi}^{(1)},2r+1,\varepsilon^l,t)=  {Q^{(1)}(d_0, H_{2r+1,\xi}^{(1)},2r+1,\varepsilon^l,t) \over \phi_{(m-2r-2)/2}(p^{-2})}.$$
  
\item[{\rm (2)}]  For $1 \le r \le (m-2)/2$ and $d \in {\mathcal U}(m-1,m-2r-1,d_0)$ we have
   $$Q^{(1)}(d_0,d,H_{m-1,\xi}^{(0)},2r,\varepsilon^l,t)= {Q^{(0)}(d_0d,1,H_{2r,\xi \chi(d)}^{(0)},2r,\varepsilon^l,t) \over 2 \phi_{(m-2r-2)/2}(p^{-2})}$$
  if $l\nu(d_0)=0,$ and  
$$Q^{(1)}(d_0,d,H_{m-1,\xi}^{(0)},2r,\varepsilon^l,t) =0$$
otherwise. 
\end{enumerate}   

\end{props}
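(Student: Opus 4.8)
The plan is to follow the proof of Proposition 4.3.3 step by step, with the conditions (H-$p$-2), (H-$p$-4) (resp.\ (H-2-2), (H-2-4), (H-2-6)) now taking over the reductive role; as there, one treats $p\ne 2$ and $p=2$ separately, and in each case the summation over $d\in{\mathcal U}$ produces the factor $\phi_{(m-2r-2)/2}(p^{-2})^{-1}$.

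For $p\ne 2$ and assertion (1), I would start from the definition of $Q^{(1)}(d_0,H_{m-1,\xi}^{(1)},2r+1,\varepsilon^l,t)$ and apply (H-$p$-2) to replace each $H_{m-1,\xi}^{(1)}(\Theta_{m-2r-2,d}\bot pB)$ by $H_{2r+1,\xi}^{(1)}(pdB)$. Lemma 4.1.3~(2), with $n_0=m-2r-2$ (even and strictly smaller than $m-1$), gives
\[
\alpha_p(\Theta_{m-2r-2,d}\bot pB)=2\,\alpha_p(pB)\,(1+\chi(d)p^{-(m-2r-2)/2})^{-1}\prod_{i=1}^{(m-2r-2)/2}(1-p^{-2i}),
\]
Lemma 4.1.3~(1) with exponent $0$ gives $\alpha_p(pdB)=\alpha_p(pB)$, and Lemma 4.3.2~(1) gives $\varepsilon(\Theta_{m-2r-2,d}\bot pB)=((-1)^{[(m-2r-1)/2]}d,\,d_0d)_p\,\varepsilon(pB)$ as well as $\varepsilon(pdB)=\varepsilon(pB)$ (the exponent $2r+2$ being even). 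The change of variables $B\mapsto dB$ is a bijection of $p^{-1}S_{2r+1,p}(d_0d)\cap S_{2r+1,p}$ onto $p^{-1}S_{2r+1,p}(d_0)\cap S_{2r+1,p}$, after which the sum over $B$ is, for each $d$, $\kappa(d_0,2r+1,l)\,Q^{(1)}(d_0,H_{2r+1,\xi}^{(1)},2r+1,\varepsilon^l,t)$, and the residual coefficient is
\[
\kappa(d_0,m-1,l)^{-1}\kappa(d_0,2r+1,l)\sum_{d\in{\mathcal U}}\tfrac12\bigl(1+\chi(d)p^{-(m-2r-2)/2}\bigr)\bigl((-1)^{[(m-2r-1)/2]}d,d_0d\bigr)_p^{\,l}.
\]
Using $\sum_{d\in{\mathcal U}}\chi(d)=0$, the triviality of the Hilbert symbol of two units for odd $p$, and the explicit form of $\kappa(d_0,\,\cdot\,,l)$, this collapses to $\phi_{(m-2r-2)/2}(p^{-2})^{-1}$, which is (1). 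Assertion (2) is identical, using (H-$p$-4) to replace $H_{m-1,\xi}^{(1)}(\Theta_{m-2r-1,d}\bot pB)$ by $H_{2r,\xi\chi(d)}^{(0)}(pB)$; when $l\nu(d_0)>0$ the vanishing follows exactly as in the proof of Proposition 4.3.3~(2), by choosing $a\in{\bf Z}_p^{*}$ with $(a,d_0)_p=-1$ and noting that $B\mapsto aB$ fixes $S_{2r,p}(d_0d)$, $\alpha_p(pB)$ and (by (H-$p$-5)) the value of $H_{2r,\xi\chi(d)}^{(0)}$, while negating $\varepsilon(pB)$, so that the series equals its own negative.

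For $p=2$ one has $Q^{(1)}=Q^{(11)}+Q^{(12)}+Q^{(13)}$, and I would dispose of the three summands in turn. For $Q^{(11)}$: apply (H-2-2) to $H_{m-1,\xi}^{(1)}(2\Theta_{m-2r-2,d}\bot 4B)=H_{2r+1,\xi}^{(1)}(4dB)$, compute $\alpha_2$ by Lemma 4.1.4~(2) and $\varepsilon$ by Lemma 4.3.2~(2), substitute $B\mapsto dB$ on $S_{2r+1,2}(d_0d)_e$, and sum over $d\in{\mathcal U}$. For $Q^{(12)}$: the same computation restricted to $S_{2r+1,2}(d_0)_o$, i.e.\ the case $d=1$. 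For $Q^{(13)}$, built from the type-(III) block $-1\bot 2\Theta_{m-2r-4}\bot 4B$ with $B\in S_{2r+2,2}(d_0)_o$: use (H-2-4) together with the Watson canonical form (Lemma 4.3.1 and its Corollary), the density identity of Lemma 4.1.5, and (H-2-6), to rewrite the sum over these odd matrices in terms of the degree-$(2r+1)$ quantity. Assembling the three pieces, comparing with $Q^{(1)}(d_0,H_{2r+1,\xi}^{(1)},2r+1,\varepsilon^l,t)=Q^{(11)}+Q^{(12)}$ (its $Q^{(13)}$-term being empty), and reducing the $\alpha_2$- and $\kappa$-factors as in the odd case, yields (1); assertion (2) for $p=2$ follows the same route via (H-2-4) and Lemmas 4.1.4, 4.3.2, the vanishing for $l\nu(d_0)>0$ again coming from the sign-flipping device. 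The hard part will be the $p=2$ analysis: one must keep careful track of which $B$ fall into the even part, the odd part, and each of the normal-form types (I)--(III), and show that the odd-type contributions $Q^{(12)}$ and $Q^{(13)}$ reassemble correctly --- this is precisely where Lemmas 4.3.1 and 4.1.5 do the real work, and where the argument is genuinely more involved than its counterpart in \cite{I-K3}.
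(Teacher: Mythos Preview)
Your overall strategy matches the paper's, but there is a genuine gap in part (1): you only treat the case $l\nu(d_0)=0$. For $p\ne 2$ you invoke ``the triviality of the Hilbert symbol of two units,'' but when $l=1$ and $\nu(d_0)=1$ the argument $d_0d$ is not a unit and the symbol $((-1)^{(m-2r-2)/2}d,(-1)^{r+1}d_0d)_p$ is not identically $1$. In this case the paper computes
\[
\sum_{d\in{\mathcal U}}\bigl(1+\chi(d)p^{-(m-2r-2)/2}\bigr)\bigl((-1)^{(m-2r-2)/2}d,(-1)^{r+1}d_0d\bigr)_p
=2p^{-(m-2r-2)/2}\cdot(\text{signs}),
\]
and it is precisely this extra $p^{-(m-2r-2)/2}$ that cancels the discrepancy $p^{-(m/2-1-r)\nu(d_0)}$ in the ratio $\kappa(d_0,m-1,1)/\kappa(d_0,2r+1,1)$. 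Without this computation the reduction does not close.

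For $p=2$ you have misplaced the role of Lemma~4.3.1, its Corollary, and (H-2-6). The paper does \emph{not} use these to rewrite $Q^{(13)}$; instead it applies (H-2-4) (with $r$ shifted to $r+1$ and $a=1$) to recognize $Q^{(13)}$ as $Q^{(0)}(d_0,H_{2r+2,\xi}^{(0)},2r+1,\varepsilon^l,t)/\phi_{(m-2r-4)/2}(2^{-2})$ times a power of $2$, and then invokes the already-proved Proposition~4.3.3(1). The canonical-form trick and (H-2-6) are used elsewhere, namely to show $Q^{(12)}(d_0,H_{2r+1,\xi}^{(1)},2r+1,\varepsilon,t)=0$ when $l=1$ and $\nu(d_0)=3$: one sends $B=4c_{01}\bot 4B_1\mapsto 4c_{01}\bot 20B_1$, checks via the Corollary that this is a bijection of $4S_{2r+1,2}(d_0)_o/\!\sim$, and uses (H-2-6) and Lemma~4.1.4(3) to see that $H^{(1)}_{2r+1,\xi}$ and $\alpha_2$ are preserved while $\varepsilon$ flips sign. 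Your sketch of $Q^{(12)}$ (``the same computation restricted to $S_{2r+1,2}(d_0)_o$'') does not produce this vanishing. Once $Q^{(12)}=Q^{(13)}=0$ in the ramified case, only $Q^{(11)}$ survives, and there the sum $\sum_{d\in{\mathcal U}}(1+\chi(d)2^{-(m-2r-2)/2})(d,d_0)_2=2^{1-(m-2r-2)/2}$ again supplies the missing power needed to match the $\kappa$ ratio. Part (2) is fine as you describe it; the vanishing there does go through the sign-flip device (or equivalently through Proposition~4.3.3(2)).
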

 
 \begin{proof} 
 We may suppose that $r <(m-2)/2.$ First suppose that $p \not=2.$
  As in the proof of Proposition 4.3.3 (1), we have a bijection $p^{-1}S_{2r+1,p}(d_0d) \cap S_{2r+1,p} \ni B \mapsto dB \in p^{-1}S_{2r+1,p}(d_0)  \cap S_{2r+1,p}.$  We also note that  $\varepsilon(dB)=\varepsilon(B),$ and $\alpha_p(dB)=\alpha_p(B).$   Hence, by (H-$p$-2), Lemmas 4.1.3 and 4.3.2, similarly to Proposition 4.3.3 (2), we have 
      $$Q^{(1)}(d_0,H_{m,\xi}^{(1)},2r+1,\varepsilon^l,t)=p^{(m/2-1)l\nu(d_0)}((-1)^{m/2} d_0,(-1)^{lm/2})_p$$
      $$ \times \sum_{B \in p^{-1}S_{2r+1,p}(d_0) \cap S_{2r+1,p}} {H_{2r+1,\xi}^{(1)}(pB)\varepsilon(pB)^l  \over 2\phi_{(m-2r-2)/2}(p^{-2})\alpha_p(pB)}t^{\nu(\det (pB))}$$
  $$ \times  \sum_{d \in {\mathcal U}}(1+p^{-(m-2r-2)/2} \chi(d))((-1)^{(m-2r-2)/2}d,(-1)^{r+1}d_0d)_p^l.$$
  Thus the assertion clearly holds if $l\nu(d_0)=0.$ Suppose that $l=1$ and $\nu(d_0)=1.$ Then  
\begin{eqnarray*}
\lefteqn{((-1)^{(m-2r-2)/2}d,(-1)^{r+1}d_0d)_p} \\
&=&\chi(d)((-1)^{r+1},(-1)^{r+1}d_0d)_p((-1)^{m/2},(-1)^{m/2}d_0)_p, 
\end{eqnarray*}
  and therefore
\begin{eqnarray*}
\lefteqn{\sum_{d \in {\mathcal U}}(1+p^{-(m-2r-2)/2} \chi(d))((-1)^{(m-2r-2)/2}d,(-1)^{r+1}d_0)_p } \\
&=&2p^{-(m-2r-2)/2}((-1)^{r+1},(-1)^{r+1}d_0d)_p((-1)^{m/2},(-1)^{m/2}d_0)_p. \end{eqnarray*}
This proves the assertion (1).
  
 By (H-$p$-4) and by Lemmas 4.1.3 and 4.3.2, we have 
 \begin{eqnarray*}
\lefteqn{Q^{(1)}(d_0,d,H_{m-1,\xi}^{(1)},2r,\varepsilon^l,t)} \\
&=& {Q^{(0)}(d_0d,1,H_{2r,\xi\chi(d)}^{(0)},2r,\varepsilon^l,t) \over  2\phi_{(m-2r-2)/2}(p^{-2})}((-1)^{(m-2r)/2}d,d_0)_p^l. 
\end{eqnarray*}
  Thus the assertion (2) immediately follows if $l\nu(d_0)=0.$ The assertion  for $l=1$ and $\nu(d_0)=1$ follows from Proposition 4.3.3 (2).
  
  Next suppose that $p=2.$ 
    We have 
\begin{eqnarray*}
\varepsilon(2\Theta_{m-2r-2,d} \bot 4B)&=& (-1)^{m(m-2)/8}(-1)^{r(r+1)/2}((-1)^{m/2},(-1)^{m/2}d_0)_2 \\
&& \hspace*{5mm}\times ((-1)^{r+1},(-1)^{r+1}d_0d)_2(d_0,d)_2\,\,\varepsilon(4B) 
\end{eqnarray*}
for $d \in {\mathcal U}$ and  $B \in S_{2r+1,2}(dd_0).$ Thus, similarly to (1) for $p \not=2$, we obtain
\begin{eqnarray*}
\lefteqn{
Q^{(11)}(d_0,H_{m-1,\xi}^{(1)},2r+1,\varepsilon^l,t)=(-1)^{r(r+1)l/2}t^{-2r}((-1)^{r+1},(-1)^{r+1}d_0)_2^l
} \\
&\times& 2^{(m/2-1)l\nu(d_0)} \sum_{B \in S_{2r+1,2}(d_0)_e} {2^{r(2r+1)}H_{2r+1,\xi}^{(1)}(4B)\varepsilon(4B)^l  \over 2 \cdot 2^{m-2r-2}  \phi_{(m-2r-2)/2}(2^{-2})\alpha_2(4B)}t^{\nu(\det (4B))} \\
&\times& \sum_{d \in {\mathcal U}}(1+2^{-(m-2r-2)/2} \chi(d))(d,d_0)_2^l\\
&=& \sum_{d \in {\mathcal U}}(1+2^{-(m-2r-2)/2} \chi(d))(d,d_0)_2^l {Q^{(11)}(d_0,H_{2r+1,\xi}^{(1)},2r+1,\varepsilon^l,t) \over 2^{1+(m-2r-2)(1-l\nu(d_0)/2)}\phi_{(m-2r-2)/2}(2^{-2})}.
\end{eqnarray*}
In the same manner as above, we obtain 
\begin{eqnarray*}
\lefteqn{
Q^{(12)}(d_0,H_{m-1,\xi}^{(1)},2r+1,\varepsilon^l,t)=(-1)^{r(r+1)l/2}t^{-2r}((-1)^{r+1},(-1)^{r+1}d_0)_2^l} \\
&\times& 2^{(m/2-1)l\nu(d_0)}\sum_{B \in S_{2r+1,2}(d_0)_o} {2^{r(2r+1)}H_{2r+1,\xi}^{(1)}(4B)\varepsilon(4B)^l  \over 2^{m-2r-2}  \phi_{(m-2r-2)/2}(2^{-2})\alpha_2(4B)}t^{\nu(\det (4B))} \\
&=&  {Q^{(11)}(d_0,H_{2r+1,\xi}^{(1)},2r+1,\varepsilon^l,t) \over 2^{(m-2r-2)(1-l\nu(d_0)/2)}\phi_{(m-2r-2)/2}(2^{-2})}.
\end{eqnarray*}
 Furthermore we have 
 \begin{eqnarray*}\varepsilon(-1 \bot 2\Theta_{m-2r-4} \bot 4B)&=&(-1)^{m(m-2)/8}(-1)^{r(r+1)/2}((-1)^{m/2},(-1)^{m/2}d_0)_2 \\
&& \hspace*{5mm}\times ((-1)^{r+1},(-1)^{r+1}d_0)_2(2,d_0)_2\varepsilon(2B) 
\end{eqnarray*}
for $d \in {\mathcal U}$ and  $B \in S_{2r+2,2}(dd_0)_o.$ 
Hence   
\begin{eqnarray*}
\lefteqn{Q^{(13)}(d_0,H_{m-1,\xi}^{(1)},2r+1,\varepsilon^l,t)=(-1)^{r(r+1)l/2}t^{-2r-2}((-1)^{r+1},(-1)^{r+1}d_0)_2^l } \\
&&\times (2,d_0)_2^l \,\,2^{(m/2-1)l\nu(d_0)} \hspace*{-7mm}
  \sum_{B \in S_{2r+2,2}(d_0)_o} {H_{2r+2,\xi}^{(0)}(2B)\varepsilon(4B)^l  \over   \phi_{(m-2r-4)/2}(2^{-2})\alpha_2(2B)}t^{\nu(\det (4B))} \\
&=& (((-1)^{r+1}2,d_0)_2(-1)^{(r+1)(r+2)/2})^l 2^{(m/2-1)l\nu(d_0)} \\
&& \times \sum_{B \in S_{2r+2,2}(d_0)_o} {H_{2r+2,\xi}^{(0)}(2B)\varepsilon(2B)^l  \over   \phi_{(m-2r-4)/2}(2^{-2})\alpha_2(2B)}t^{\nu(\det (2B))} \\
 &=& {Q^{(0)}(d_0,H_{2r+2,\xi}^{(0)},2r+1,\varepsilon^l,t) \over  \phi_{(m-2r-4)/2}(2^{-2})}2^{(m/2-1)l\nu(d_0)}.
 \end{eqnarray*}
  First suppose that $l=0$ or $\nu(d_0)$ is even. Then  $(d,d_0)_2^l=1.$ Hence     $$Q^{(11)}(d_0,H_{m-1,\xi}^{(1)},2r+1,\varepsilon^l,t)+Q^{(12)}(d_0,H_{m-1,\xi}^{(1)},2r+1,\varepsilon^l,t)$$
  $$=  {Q^{(1)}(d_0,H_{2r+1,\xi}^{(1)},2r+1,\varepsilon^l,t) \over 2^{(m-2r-2)(1-\nu(d_0)l/2)}\phi_{(m-2r-2)/2}(2^{-2})}.$$ 
  Furthermore by Proposition 4.3.3 (2), we have    $$Q^{(13)}(d_0,H_{m-1,\xi}^{(1)},2r+1,\varepsilon^l,t)={Q^{(1)}(d_0,H_{2r+1,\xi}^{(1)},2r+1,\varepsilon^l,t) \over  \phi_{(m-2r-4)/2}(2^{-2})}$$
  if $l\nu(d_0)=0,$ 
  and    
$$Q^{(13)}(d_0,H_{m-1,\xi}^{(1)},2r+1,\varepsilon,t)=0$$
 if $4^{-1}d_0 \equiv -1 \ {\rm mod} \ 4.$ 
 Thus summing up these two quantities, we prove the assertion.
 Next suppose that $l=1$ and $\nu(d_0)=3.$ 
Then, we have 
 $$Q^{(13)}(d_0,H_{m-1,\xi}^{(1)},2r+1,\varepsilon,t)=0.$$
 We prove 
$$Q^{(12)}(d_0,H_{2r+1,\xi}^{(1)},2r+1,\varepsilon,t)=0.$$
If $r=0,$ then clearly $S_{2r+1,2}(d_0)_o$ is empty. Suppose that $r \ge 1.$ Then 
for $B \in  4S_{2r+1,2;o}$ take a canonical form $4c_{01} \bot 4B_1$ with $c_{01} \in {\bf Z}_2^*, B_1 \in  S_{2r,2},$ and
 put $B'=4c_{01} \bot 4\cdot 5B_1.$ Then, by Corollary to Lemma 4.3.1,  the mapping $B \mapsto B'$ induces a bijection from $4S_{2r+1,2}(d_0)_o/\sim$ to itself, and 
 $\varepsilon(B')=-\varepsilon(B).$ Then,  by (H-2-6), and Lemma 4.1.4 (3), we can prove the above equality  
in the same way as in the proof of (1) for $p \not=2.$. We also note that $\sum_{d \in {\mathcal U}}(1+2^{-(m-2r-2)/2} \chi(d))(d,d_0)_2=2^{1-(m-2r-2)/2}.$
This proves the assertion.


 The assertion (2) for $p =2$ can be proved in the same manner as in (2) for $p \not=2.$
\end{proof}

\bigskip

 \subsection{Proof of the main result  }
 
 \noindent
{ }

\bigskip

 In this section, we prove our main result. First we give an explicit formula for the power series of Koecher-Maass
 type.

   
\begin{thms}
Let  $d_0 \in {\mathcal F}_{p},$ and put $\xi_0=\chi(d_0).$ Then we have the following:
\begin{enumerate}
\item[{\rm (1)}] $P_{n-1}^{(1)}(d_0,\iota,X,t)=\displaystyle{(p^{-1}t)^{\nu(d_0)}(1-\xi_0 t^2p^{-5/2}) \over   \phi_{(n-2)/2}(p^{-2})(1-t^2p^{-2}X)(1-t^2p^{-2}X^{-1}) }$ \\
\item[] \hspace*{15mm}$\times \displaystyle{1 \over \prod_{i=1}^{(n-2)/2} (1-t^2p^{-2i-1}X)(1-t^2p^{-2i-1}X^{-1}) }$. \\[3mm]
\item[{\rm (2)}]
$P_{n-1}^{(1)}(d_0, \varepsilon,X,t)=\displaystyle{(p^{-1}t)^{\nu(d_0)}(1-\xi_0 t^2p^{-1/2-n}) \over \phi_{(n-2)/2}(p^{-2})}$ \\
\item[] \hspace*{15mm}$\times \displaystyle{1 \over \prod_{i=1}^{(n-2)/2} (1-t^2p^{-2i-1}X)(1-t^2p^{-2i-1}X^{-1}) }$.
\end{enumerate}
\end{thms}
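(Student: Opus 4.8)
The plan is to reduce the evaluation of $P^{(1)}_{n-1}(d_0,\varepsilon^l,X,t)$ to the explicit power series established in \cite{I-S}, by peeling off the Jordan decomposition of the matrices occurring in its defining sum through the reduction formulas of Propositions 4.3.3 and 4.3.4. The first step --- this is Proposition 4.4.3 --- is to substitute into the definition of $P^{(1)}_{n-1}$ the expansion of $\widetilde F_p^{(1)}(B,X)$ furnished by Lemma 4.2.2, to interchange the two summations so that the sum over the auxiliary matrix $B'$ (as in Lemma 4.2.2) stands outside, and to use Lemma 4.1.1 --- together with the ${\bf Q}_p$-invariance of the Hasse invariant $\varepsilon$ --- to rewrite the local-density ratios as $GL_{n-1}({\bf Z}_p)$-orbit counts. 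Organizing the matrices by the degree $n_1$ of their $p$-scaled Jordan part $pB_1$ and, when $n_1$ is even, by the discriminant class of the unimodular first block $\Theta$, one expresses $P^{(1)}_{n-1}(d_0,\varepsilon^l,X,t)$ as a finite sum of subseries of the types $Q^{(0)},Q^{(1)}$ (for $p\neq2$), resp.\ $Q^{(11)},Q^{(12)},Q^{(13)},Q^{(1)},Q^{(0)}$ (for $p=2$), attached to the $GL$-invariant family $\{H^{(j)}_{\cdot,\xi}\}$ that records the polynomials $G_p^{(1)}$ (equivalently, via Lemma 4.2.2, the $\widetilde F_p^{(1)}$), with coefficients made explicit by Lemma 4.2.1 and the remark following it; the normalizing factors $\kappa(d_0,n-1,l)^{-1}$ and $t^{\delta_{2,p}(2-n)}$ get absorbed along the way.

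The second step is to check that this concrete family $\{H^{(j)}_{\cdot,\xi}\}$ satisfies the compatibility conditions (H-$p$-0)--(H-$p$-5) for $p\neq2$ and (H-2-0)--(H-2-6) for $p=2$, so that Propositions 4.3.3 and 4.3.4 apply to it. For $p\neq2$ these conditions reduce, via Lemma 4.2.2, to the behaviour of $\widetilde F_p^{(1)}$ and of $G_p^{(1)}$ when a unimodular block or a $p$-scaled block is split off, which is exactly what Lemma 4.2.1 records, the normalizations being handled by the $\alpha_p$-identities of Lemma 4.1.3 and the Hasse-invariant identities of Lemma 4.3.2. For $p=2$ one needs in addition Lemma 4.1.4 and the canonical-form identities of Lemma 4.3.1, its Corollary, and Lemma 4.1.5.

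The third step is to apply Propositions 4.3.3 and 4.3.4 to each of the subseries produced in the first step. Each application contracts the degree of the $H$-block down to that of the $p$-scaled inner block, at the cost of a factor $\phi_j(p^{-2})^{-1}=\prod_{i=1}^{j}(1-p^{-2i})^{-1}$ (for suitable $j$) and, when $l=1$, either a Hilbert-symbol or sign factor or the vanishing $Q=0$ in the case $\nu(d_0)>0$; this reduces every subseries to one of the Koecher--Maass-type power series whose closed form is established in \cite{I-S}, the smallest being the one-dimensional $Q^{(1)}(d_0,H^{(1)}_{1,\xi},1,\varepsilon^l,t)$. Substituting those formulas together with the $G_p^{(1)}$-coefficients from the first step, summing over the sizes $n_1$ of the $p$-scaled Jordan block and over the discriminant classes, and collapsing the resulting geometric series in $t$, produces the right-hand sides of (1) and (2): the prefactor $(p^{-1}t)^{\nu(d_0)}$ and the $\phi_{(n-2)/2}(p^{-2})^{-1}$ come out of the recursion, the denominator $\prod_{i=1}^{(n-2)/2}(1-t^2p^{-2i-1}X)(1-t^2p^{-2i-1}X^{-1})$ arises from the sum over $n_1$, and the numerators $1-\xi_0 t^2 p^{-5/2}$ resp.\ $1-\xi_0 t^2 p^{-1/2-n}$ are what survives of the factors $1-\xi_0 p^{n/2}Y$ and $(1-p^{n_1/2+n/2}\overline{\xi}^{(1)}(B)Y)^{-1}$ in $G_p^{(1)}$; the passage from case (1) to case (2) is caused solely by the extra weight $\varepsilon(B)$, which flips the relevant signs and removes the $d_0$-dependence from the denominator while changing the numerator.

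The main obstacle is the prime $p=2$. There ${\mathcal L}'_{m,2}$ splits into the three inequivalent reduction types (I)--(III) of Subsection 4.2, so three distinct families of subseries $Q^{(11)},Q^{(12)},Q^{(13)}$ appear; one must verify (H-2-0)--(H-2-6) for the $H$-family in this setting, keep track of the constants $\kappa(d_0,\cdot,l)$ and of the numerous extra powers of $2$, and --- the truly delicate point --- show that after applying the $p=2$ forms of Propositions 4.3.3 and 4.3.4 these three subseries recombine into exactly the same closed expression as for odd $p$; in particular that the $Q^{(13)}$-branch either reproduces the main term or vanishes according to the residue class of $d_0$ modulo an appropriate power of $2$. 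This recombination, which rests on the canonical-form combinatorics of Watson through Lemma 4.3.1, its Corollary, and Lemma 4.1.5, is the part the Introduction describes as being more elaborate than in the preceding papers.
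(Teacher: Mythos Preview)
Your outline matches the paper through the reduction to the $\zeta_m$-series of \cite{I-S}: Proposition 4.4.2 (your interchange of sums via Lemma 4.2.2 and Lemma 4.1.1) and Proposition 4.4.3 (your decomposition by the size of the $p$-scaled Jordan block) are exactly the first two steps. One correction: the family $\{H^{(j)}_{\cdot,\xi}\}$ to which Propositions 4.3.3 and 4.3.4 are applied is simply the constant family $H\equiv 1$; the polynomials $G_p^{(1)}$ are not absorbed into $H$ but are evaluated explicitly by Lemma 4.2.1 and become the coefficients $c(r,d_0,d,X)$, $c(r,d_0,X)$ in front of the resulting $\zeta$-series. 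Conditions (H-$p$-0)--(H-$p$-5) and (H-2-0)--(H-2-6) are then trivially satisfied, so your second step is unnecessary in the form you describe.

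The genuine gap is in your third step. After substituting the explicit formulas for $\zeta_m$ from \cite{I-S}, one does \emph{not} obtain the stated denominators directly: the a~priori denominator coming from Proposition 4.4.2 is $\prod_{i=1}^{n-1}(1-t^2Xp^{i-n-1})$, and the $\zeta_m$-formulas contribute further factors such as $(1-p^{-2}t^2X^{-1})\prod_i(1-p^{2i-n-1}t^2X^{-1})$, none of which is symmetric in $X\leftrightarrow X^{-1}$. Your phrase ``collapsing the resulting geometric series in $t$'' does not account for how these asymmetric factors cancel down to the symmetric product $\prod_{i=1}^{(n-2)/2}(1-t^2p^{-2i-1}X)(1-t^2p^{-2i-1}X^{-1})$ (with or without the extra $(1-t^2p^{-2}X)(1-t^2p^{-2}X^{-1})$). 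The paper's device for this is the functional equation $\widetilde F_p^{(1)}(T,X^{-1})=\widetilde F_p^{(1)}(T,X)$ (recalled from \cite{Kat1}), which forces $P_{n-1}^{(1)}(d_0,\omega,X,t)=P_{n-1}^{(1)}(d_0,\omega,X^{-1},t)$ as rational functions; this symmetry pins the reduced denominator to the stated symmetric form, and then a single evaluation (at $t=p^{(n-1)/2}X^{1/2}$ in case (1), at $t=p^{n/2}X^{1/2}$ in case (2)), matched against the leading term of the numerator computed from the $r=(n-2)/2$ summand, determines the remaining linear factor in $t^2$. Without this symmetry argument you would have to verify a nontrivial polynomial identity of degree $n$ in $t$ by hand, and your proposal gives no indication of how that would be done.
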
  

To prove the  above theorem, we define another formal power series. Namely,  for $l=0,1$  we define $K_{n-1}^{(1)}(d_0,\varepsilon^l,X,t)$ as 
\begin{eqnarray*}
\lefteqn{K_{n-1}^{(1)}(d_0,\varepsilon^l,X,t)=\kappa(d_0,n-1,l)^{-1} t^{\delta_{2,p}(2-n)} }\\
&& \times \sum_{B' \in {\mathcal L}_{n-1,p}^{(1)}(d_0)}{G_p^{(1)}(B',p^{-(n+1)/2}X) \varepsilon(B')^l \over \alpha_p(B')}X^{-\mathfrak{e}^{(1)}(B')} t^{\nu(\det B')}.
\end{eqnarray*}
\begin{props}
Let $d_0$ be as above.  Then,  we have
$$P_{n-1}^{(1)}(d_0,\omega,X,t)=\prod_{i=1}^{n-1}(1-t^2Xp^{i-n-1})^{-1} K_{n-1}^{(1)}(d_0,\omega,X,t).$$
\end{props}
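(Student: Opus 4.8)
The plan is to insert the expansion of $\widetilde F_p^{(1)}(B,X)$ furnished by Lemma 4.2.2 into the series defining $P_{n-1}^{(1)}(d_0,\omega,X,t)$ and then interchange the two summations. Writing $\omega=\varepsilon^l$ and substituting Lemma 4.2.2 into the definition yields
\[
P_{n-1}^{(1)}(d_0,\omega,X,t)=\kappa(d_0,n-1,l)^{-1}t^{\delta_{2,p}(2-n)}\sum_{B}\sum_{B'}\frac{\omega(B)\,t^{\nu(\det B)}}{\alpha_p(B)}\,\frac{\alpha_p(B',B)}{\alpha_p(B')}\,X^{-\mathfrak{e}^{(1)}(B')}G_p^{(1)}(B',p^{-(n+1)/2}X)(p^{-1}X)^{(\nu(\det B)-\nu(\det B'))/2},
\]
where $B$ runs over ${\mathcal L}_{n-1,p}^{(1)}(d_0)/\!\!\sim$ and $B'$ over ${\mathcal L}_{n-1,p}^{(1)}/\!\!\sim$. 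Since every power of $t$ receives contributions from only finitely many pairs $(B,B')$, this double sum may be rearranged freely as an identity of formal power series in $t$.

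Next I would fix $B'$ and evaluate the inner sum over $B$. A term is nonzero only when $\alpha_p(B',B)\neq0$, that is, when $B\sim_{{\bf Z}_p}B'[W]$ for some $W\in M_{n-1}({\bf Z}_p)^{\times}$. I would first record that ${\mathcal L}_{n-1,p}^{(1)}$ is stable under $B'\mapsto B'[W]$ for every such $W$ (for $p\neq2$ this is vacuous; for $p=2$ the congruence $B'\equiv-{}^trr\bmod 4{\mathcal L}_{n-1,2}$ is carried to $B'[W]\equiv-{}^t(rW)(rW)\bmod 4{\mathcal L}_{n-1,2}$), and that $\det(B'[W])=\det B'\cdot(\det W)^2$, so that $B'[W]\in{\mathcal L}_{n-1,p}^{(1)}(d_0)$ exactly when $B'\in{\mathcal L}_{n-1,p}^{(1)}(d_0)$. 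Hence the terms with $B'\notin{\mathcal L}_{n-1,p}^{(1)}(d_0)$ are empty, so after the swap $B'$ effectively ranges over ${\mathcal L}_{n-1,p}^{(1)}(d_0)/\!\!\sim$ --- the index set of $K_{n-1}^{(1)}$ --- and for such a $B'$ the classes $B$ occurring are precisely the $GL_{n-1}({\bf Z}_p)$-orbits of $B'[W]$, $W\in M_{n-1}({\bf Z}_p)^{\times}$, with $\bigsqcup_B\Omega(B',B)/GL_{n-1}({\bf Z}_p)=M_{n-1}({\bf Z}_p)^{\times}/GL_{n-1}({\bf Z}_p)$. Using that $B'[W]\sim_{{\bf Q}_p}B'$ forces $\omega(B)=\omega(B')$ (the Hasse invariant being a $GL_{n-1}({\bf Q}_p)$-invariant), that $\nu(\det B)-\nu(\det B')=2\nu(\det W)$, and Lemma 4.1.1 (1) with $m=n-1$, the inner sum becomes
\[
\omega(B')\,t^{\nu(\det B')}\sum_{W\in M_{n-1}({\bf Z}_p)^{\times}/GL_{n-1}({\bf Z}_p)}\bigl(p^{-n}t^{2}X\bigr)^{\nu(\det W)}=\omega(B')\,t^{\nu(\det B')}\prod_{i=1}^{n-1}\bigl(1-t^{2}Xp^{i-n-1}\bigr)^{-1},
\]
where the last equality is the standard evaluation $\sum_{W\in M_{m}({\bf Z}_p)^{\times}/GL_{m}({\bf Z}_p)}y^{\nu(\det W)}=\prod_{i=0}^{m-1}(1-p^{i}y)^{-1}$, applied with $m=n-1$ and $y=p^{-n}t^{2}X$.

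Finally, substituting this back, the product $\prod_{i=1}^{n-1}(1-t^{2}Xp^{i-n-1})^{-1}$ is independent of $B'$ and factors out of the sum over $B'$; what remains is
\[
\kappa(d_0,n-1,l)^{-1}t^{\delta_{2,p}(2-n)}\sum_{B'}\frac{G_p^{(1)}(B',p^{-(n+1)/2}X)\,\varepsilon(B')^{l}}{\alpha_p(B')}X^{-\mathfrak{e}^{(1)}(B')}t^{\nu(\det B')}=K_{n-1}^{(1)}(d_0,\omega,X,t),
\]
which is the claimed identity. The step I expect to be the main obstacle is the middle one: justifying the interchange of summation together with the bijection $B\leftrightarrow W$, i.e. checking that for a fixed $B'\in{\mathcal L}_{n-1,p}^{(1)}(d_0)$ the matrices occurring in the inner sum are exactly the $GL_{n-1}({\bf Z}_p)$-classes of $B'[W]$, $W\in M_{n-1}({\bf Z}_p)^{\times}/GL_{n-1}({\bf Z}_p)$, all still lying in ${\mathcal L}_{n-1,p}^{(1)}(d_0)$; in particular the case $p=2$ needs the extra verification that the plus-space condition modulo $4{\mathcal L}_{n-1,2}$ is preserved under $B'\mapsto B'[W]$. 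Once this is settled, the remaining work is the routine summation of a geometric (zeta-function) series via Lemma 4.1.1 (1).
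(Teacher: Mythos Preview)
Your proof is correct and follows essentially the same approach as the paper's: substitute Lemma 4.2.2, swap the summations, then use Lemma 4.1.1(1) to turn the inner sum into a sum over $M_{n-1}({\bf Z}_p)^{\times}/GL_{n-1}({\bf Z}_p)$ and evaluate it as the Euler product (the paper cites Theorem~5 of \cite{B-S} for this last step, which is the same ``standard evaluation'' you invoke). You have in fact supplied more justification than the paper does---the paper silently replaces $\omega(B)$ by $\omega(B')$ and does not comment on the $p=2$ stability of ${\mathcal L}_{n-1,2}^{(1)}$ under $B'\mapsto B'[W]$---so your anticipated ``main obstacle'' is not a gap at all.
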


\begin{proof} We note that $B'$ belongs to ${\mathcal L}_{n-1,p}^{(1)}(d_0)$ if $B$ belongs to ${\mathcal L}_{n-1,p}^{(1)}(d_0)$ and $\alpha_p(B',B) \not=0.$ Hence  by Lemma 4.2.2 for $\omega=\varepsilon^l$ with $l=0,1$ we have
\begin{eqnarray*}
\lefteqn{P_{n-1}^{(1)}(d_0,\omega,X,t)=\kappa(d_0,n-1,l)^{-1}t^{\delta_{2,p}(2-n)} } \\
&& \times \sum_{B \in {\mathcal L}_{n-1,p}^{(1)}(d_0)}{1 \over \alpha_p(B)}\sum_{B'} {G_p^{(1)}(B',p^{-(n+1)/2}X) X^{-\mathfrak{e}^{(1)}(B')}\alpha_p(B',B) \omega(B')\over \alpha_p(B')} \\
&& \times (p^{-1}X)^{(\nu(\det B)-\nu(\det B'))/2}t^{\nu(\det B)} \\
&=&\kappa(d_0,n-1,l)^{-1}t^{\delta_{2,p}(2-n)}\sum_{B' \in {\mathcal L}_{n-1,p}^{(1)}(d_0)}{G_p^{(1)}(B',p^{-(n+1)/2}X) \omega(B') \over \alpha_p(B')}X^{-\mathfrak{e}^{(1)}(B')} \\
&&\times \sum_{B \in {\mathcal L}_{n-1,p}^{(1)}(d_0)} {\alpha_p(B',B) \over \alpha_p(B)} (p^{-1}X)^{(\nu(\det B)-\nu(\det B'))/2} t^{\nu(\det B)}.
\end{eqnarray*}
Hence  by Theorem 5 of \cite{B-S}, and by (1) of Lemma 4.1.1, we have 
\begin{eqnarray*}
\lefteqn{\sum_{B} {\alpha_p(B',B) \over \alpha_p(B)} (p^{-1}X)^{(\nu(\det B)-\nu(\det B'))/2} t^{\nu(\det B)}} \\
&=&\sum_{W \in M_{n-1}({\bf Z}_p)^{\times}/GL_{n-1}({\bf Z}_p)} (t^2Xp^{-1}p^{-n+1})^{\nu(\det W)}t^{\nu(\det B')} \\
&=&\prod_{i=1}^{n-1}(1-t^2Xp^{i-n-1})^{-1}t^{\nu(\det B')}. 
\end{eqnarray*}
Thus the assertion holds.
\end{proof}

For a variable $X$ we introduce the symbol $X^{1/2}$ so that $(X^{1/2})^2=X,$ and for an integer $a$ write $X^{a/2}=(X^{1/2})^a.$ Under this convention, we can write $X^{-\mathfrak{e}^{(1)}(T)}t^{\nu(\det T)}$ as $X^{\delta_{2,p}(n-2)/2}X^{\nu(d_0)/2}(X^{-1/2}t)^{\nu(\det T)}$ if $T \in {\mathcal L}_{r-1,p}'(d_0),$
and hence we can write $K_{n-1}^{(1)}(d_0,\varepsilon^l,X,t)$  as
 $$K_{n-1}^{(1)}(d_0,\varepsilon^l,X,t)=\kappa(d_0,n-1,l)^{-1}(tX^{-1/2})^{\delta_{2,p}(2-n)}X^{\nu(d_0)/2} $$
$$ \times \sum_{B' \in {\mathcal L}_{n-1,p}^{(1)}(d_0)}{G_p^{(1)}(B',p^{-(n+1)/2}X) \varepsilon(B')^l \over \alpha_p(B')}(tX^{-1/2})^{\nu(\det B')}.$$
In order to prove Theorem 4.4.1, 
we introduce some power series.  
Let $m$ be an  integer and $l=0$ or $1$. Then for $d_0 \in {\bf Z}_p^{\times}$ put 
\[
\zeta_{m}(d_{0}, \varepsilon^l,u )=  \sum_{T \in S_{m,p}(d_0)/\sim  
}
\frac{\varepsilon(T)^l}{\alpha_{p}(T)}u^{\nu(\det T)},
\]
and for $d_0 \in {\bf Z}_2^{\times}$ put
\[\zeta^*_{m}(d_{0},\varepsilon^l,u)  = 
\sum_{
T \in  S_{m,2}(d_0)_e/\sim}
\frac{\varepsilon(T)^l}{\alpha_{2}(T)}u^{\nu(\det T)}.
\]
We make the convention that 
 $\zeta_{0}(d_0,\varepsilon^l,u) =\zeta^*_{m}(d_{0},\varepsilon^l,u)=1$  or $0$ according as  $d_0 \in {\bf Z}_p^*$ or not. 
Now for $d \in {\bf Z}_p^{\times},$ let $Z_{m}(u,\varepsilon^l,d)$ and $Z_m^*(u,\varepsilon^l,d)$ be the formal power series in Theorems 5.1, 5.2, and 5.3 of \cite{I-S}, which are given by
$$Z_{m}(u,\varepsilon^l,d)=2^{-\delta_{2,p} m}\sum_{i=0}^{\infty} \sum_{T \in {\bf S}_{m}({\bf Z}_p,p^id)/\sim} {\varepsilon(T)^l \over \alpha_p(T)} (\eta_m^l  p^{(m+1)/2}u)^i$$
and 
$$Z_m^*(u,\varepsilon^l,d)=2^{-m}\sum_{i=0}^{\infty} \sum_{T \in {\bf S}_{m}({\bf Z}_2,2^id)_e/\sim} {\varepsilon(T)^l \over \alpha_2(T)} (\eta_m^l 2^{(m+1)/2}u)^i, $$
where ${\bf S}_{m}({\bf Z}_p,a)=\{T \in S_m({\bf Z}_p) \, | \, \det T=a \ {\rm mod} \ {{\bf Z}_p^*}^{\Box} \}, \ {\bf S}_{m}({\bf Z}_p,a)_e={\bf S}_{m}({\bf Z}_p,a) \cap S_{m}({\bf Z}_p)_e,$ and $\eta_m=((-1)^{(m+1)/2},p)_p$ or $1$ according as $m$ is odd or even. 
Here we recall that the  local density for $T \in S_m({\bf Z}_p)$  in our paper is $2^{-\delta_{2,p}m}$ times 
that in \cite{I-S}. 
Put 
$$Z_{m,e}(u,\varepsilon^l,d)={1 \over 2} (Z_{m}(u,\varepsilon^l,d)+Z_{m}(-u,\varepsilon^l,d)),$$
$$Z_{m,o}(u,\varepsilon^l,d)={1 \over 2} (Z_{m}(u,\varepsilon^l,d)-Z_{m}(-u,\varepsilon^l,d)).$$
We also define $Z_{m,e}^*(u,\varepsilon^l,d)$ and $Z_{m,o}^*(u,\varepsilon^l,d)$ in the same way. Furthermore put $x(i)=e$ or $o$ according as $i$ is even or odd. Let $d_0 \in {\mathcal F}_p.$ Let $p \not=2.$  Then 
$$\zeta_{m}(d_0,\varepsilon^l,u)=Z_{m,x(\nu(d_0))}(p^{-(m+1)/2}((-1)^{(m+1)/2},p)_p u,\varepsilon^l,p^{-\nu(d_0)}(-1)^{(m+1)/2}d_0)$$
or 
$$\zeta_m(d_0,\varepsilon^l,u)=Z_{m,x(\nu(d_0))}(p^{-(m+1)/2}u,\varepsilon^l,p^{-\nu(d_0)}(-1)^{[(m+1)/2]}d_0)$$
according as $m$ is odd and $l=1,$ or not. Let $p =2,$ and $m$ is odd. 
Then 
$$\zeta_m(d_0,\varepsilon^l,u)=2^{m}Z_{m,x(\nu(d_0))}(2^{-(m+1)/2}u,\varepsilon^l,2^{-\nu(d_0)}(-1)^{(m+1)/2}d_0).$$
Let $p =2$ and $m$ is even. Then 
$$\zeta_m^*(d_0,\varepsilon^l,u)=2^{m}Z_{m,x(\nu(d_0))}^*(2^{-(m+1)/2}u,\varepsilon^l,(-1)^{m/2}2^{-\nu(d_0)}d_0).$$
\begin{props}
  Let $d_0 \in {\mathcal F}_p.$ For a positive even integer $r$ and $ d \in {\mathcal U}$  put
$$c(r,d_0,d,X)=
(1-\chi(d_0)p^{-1/2}X)\prod_{i=1}^{r/2-1}(1-p^{2i-1}X^2)(1+\chi(d)p^{r/2-1/2}X),$$
and put  $c(0,d_0,d,X)=1.$ Furthermore, for a positive odd integer $r$ put
$$c(r,d_0,X)=(1-\chi(d_0)p^{-1/2}X)\prod_{i=1}^{(r-1)/2}(1-p^{2i-1}X^2).$$
\noindent
{\rm (1)} Suppose that $p\neq 2.$ 
\begin{enumerate}
\item[{\rm (1.1)}] Let $l=0$ or $\nu(d_0)=0.$ Then  
\begin{eqnarray*}
\lefteqn{K_{n-1}^{(1)}(d_0,\varepsilon^l,X,t) } \\
&=&X^{\nu(d_0)/2}\left\{\sum_{r=0}^{(n-2)/2}\sum_{d \in {\mathcal U}(n-1,n-2r-1,d_0)}{p^{-r(2r+1)}(tX^{-1/2})^{2r}c(2r,d_0,d,X) \over 
2^{1-\delta_{0,r}}\phi_{(n-2r-2)/2}(p^{-2})}\right. \\[2mm]
&& \hspace*{5mm} \times (p,d_0d)^l_p\,\zeta_{2r}(d_0d,\varepsilon^l,tX^{-1/2}) \\[2mm]
&&\hspace*{-3mm}+\left.\sum_{r=0}^{(n-2)/2}{p^{-(r+1)(2r+1)}(tX^{-1/2})^{2r+1}c(2r+1,d_0,X) \over \phi_{(n-2r-2)/2}(p^{-2})} \zeta_{2r+1}(p^*d_0,\varepsilon^l,tX^{-1/2})\right\},
\end{eqnarray*}
where $p^*d_0=pd_0$ or $p^{-1}d_0$ according as $\nu(d_0)=0$ or $\nu(d_0)=1.$ 

\item[{\rm (1.2)}] Let $\nu(d_0) =1.$ Then  
\begin{eqnarray*}
\lefteqn{K_{n-1}^{(1)}(d_0,\varepsilon,X,t)} \\
&=&X^{\nu(d_0)/2}\sum_{r=0}^{(n-2)/2}{p^{-(r+1)(2r+1)-r}(tX^{-1/2})^{2r+1}c(2r+1,d_0,X) \over \phi_{(n-2r-2)/2}(p^{-2})} \\
&& \times \zeta_{2r+1}(p^{-1}d_0,\varepsilon,tX^{-1/2}). 
\end{eqnarray*}
\end{enumerate}
\noindent
{\rm (2)} Suppose that $p=2.$ 
\begin{enumerate}
\item[{\rm (2.1)}] Let $l=0$ or $d_0 \equiv 1 \ {\rm mod} \ 4.$ Then
\begin{eqnarray*}
\lefteqn{K_{n-1}^{(1)}(d_0,\varepsilon^l,X,t)} \\
&=& X^{\nu(d_0)/2}\{\sum_{r= 0}^{(n-2)/2} \sum_{d \in {\mathcal U}(n-1,n-2r-1,d_0)}(tX^{-1})^{2r}2^{-r(2r+1)} { c(2r,d_0,d,X) \over 2^{1-\delta_{0,r}}\phi_{(n-2r-2)/2}(2^{-2})} \\[2mm]
&&\hspace*{10mm}  \times ((-1)^{(r+1)r/2} (2,d_0d)_2)^l \zeta_{2r}^*(d_0d,\varepsilon,tX^{-1/2}) \\[2mm]
&&\hspace*{5mm} +\sum_{r=0}^{(n-2)/2}(tX^{-1/2})^{2r+1}2^{-(r+1)(2r+1)}{c(2r+1,d_0,X) \over \phi_{(n-2r-2)/2}(2^{-2})} \\[2mm]
&&\hspace*{10mm} \times ((-1)^{(r+1)r/2} ((-1)^{r+1},(-1)^{r+1}d_0)_2)^l \zeta_{2r+1}(d_0,\varepsilon^l,tX^{-1/2})\}.
\end{eqnarray*}

\item[{\rm (2.2)}] Suppose that $4^{-1}d_0 \equiv -1 \ {\rm mod} \ 4$ or $8^{-1}d_0 \in {\bf Z}_2^*.$ Then  
\begin{eqnarray*}
\lefteqn{K_{n-1}^{(1)}(d_0,\varepsilon,X,t)} \\
&=&X^{\nu(d_0)/2}\sum_{r =0}^{(n-2)/2}(tX^{-1/2})^{2r+1}2^{-(r+1)(2r+1)-r\nu(d_0)}{c(2r+1,d_0,X) \over \phi_{(n-2r-2)/2}(2^{-2})}  \\
&& \times  (-1)^{(r+1)r/2} ((-1)^{r+1},(-1)^{r+1}d_0)_2 \zeta_{2r+1}(d_0,\varepsilon,tX^{-1/2}).
\end{eqnarray*}
\end{enumerate}
\end{props}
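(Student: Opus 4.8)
The plan is to start from the definition of $K_{n-1}^{(1)}(d_0,\varepsilon^l,X,t)$ and to insert the explicit formula for $G_p^{(1)}(B',p^{-(n+1)/2}X)$ from Lemma 4.2.1. Under the substitution $Y=p^{-(n+1)/2}X$ one has $p^{n/2}Y=p^{-1/2}X$, $p^{n_1/2+n/2}Y=p^{n_1/2-1/2}X$ and $p^{2i+n}Y^2=p^{2i-1}X^2$. Writing each $B'\in{\mathcal L}_{n-1,p}^{(1)}(d_0)$ in the form $\Theta\bot pB_1$, where $\Theta$ is its level-$0$ Jordan block (so $\Theta=\Theta_{n-1-n_1,d}$ with $d\in{\mathcal U}$ when $p\neq2$, and $\Theta$ is one of the normal forms of Lemma 4.2.1(2) when $p=2$) and $pB_1$ collects the higher blocks with $n_1=\deg B_1$, a short computation using $\overline\xi^{(1)}(B')=\chi(d)$ (which follows from $\det\Theta_{n-1-n_1,d}=(-1)^{(n-n_1)/2}d$) together with the elementary identity $\prod_{i=1}^{r}(1-p^{2i-1}X^2)/(1-p^{r-1/2}\eta X)=(1+\eta p^{r-1/2}X)\prod_{i=1}^{r-1}(1-p^{2i-1}X^2)$ (valid for $\eta=\pm1$) shows that $G_p^{(1)}(B',p^{-(n+1)/2}X)$ equals $c(n_1,d_0,d,X)$ when $n_1$ is even and $c(n_1,d_0,X)$ when $n_1$ is odd. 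In particular it depends on $B'$ only through $n_1$ and $d$, and so factors out of the sum over $B_1$.

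Next I would split the sum over $B'$ according to $n_1$ and $d$ — and, when $p=2$, also according to the three types (I), (II), (III) of $B_1$ in Lemma 4.2.1(2) — reindex the higher blocks as $pB_1$ with $B_1$ running over the appropriate set of symmetric matrices over ${\bf Z}_p$, and absorb the shift $X^{-\mathfrak{e}^{(1)}(B')}t^{\nu(\det B')}=X^{\delta_{2,p}(n-2)/2}X^{\nu(d_0)/2}(X^{-1/2}t)^{\nu(\det B')}$ via the convention introduced just before this Proposition. Each resulting group is then, by the very definitions in Subsection 4.3, a constant multiple — by $c(n_1,d_0,d,X)$ or $c(n_1,d_0,X)$ and by a power of $p$ (and of $2$) — of one of the power series $Q^{(1)}(d_0,d,H^{(0)}_{n-1,\xi},2r,\varepsilon^l,t)$, $Q^{(1)}(d_0,H^{(1)}_{n-1,\xi},2r+1,\varepsilon^l,t)$, with the family $\{H^{(j)}\}$ taken to be identically $1$; the conditions (H-$p$-0)--(H-$p$-5), resp. (H-2-0)--(H-2-6), are all satisfied automatically by the constant function. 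For $p=2$ the types (I), (III), (II) produce $Q^{(11)}$, $Q^{(13)}$, $Q^{(12)}$ respectively, whose sum is $Q^{(1)}(d_0,H^{(1)}_{n-1,\xi},2r+1,\varepsilon^l,t)$.

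I would then invoke the reduction formulas Propositions 4.3.3 and 4.3.4 to collapse the ambient degree $n$ down to the intrinsic degrees $2r$ and $2r+1$: this produces exactly the denominators $\phi_{(n-2r-2)/2}(p^{-2})$ (and, when $p=2$, the displayed powers of $2$), and replaces the ambient $Q$'s by $Q^{(0)}(d_0d,1,H^{(0)}_{2r,\xi\chi(d)},2r,\varepsilon^l,t)$ and $Q^{(1)}(d_0,H^{(1)}_{2r+1,\xi},2r+1,\varepsilon^l,t)$. Unwinding these with $H\equiv1$ — using $\alpha_p(pB)=p^{r(2r+1)}\alpha_p(B)$ for $\deg B=2r$ (Lemma 4.1.3, resp. Lemma 4.1.4 when $p=2$), the behaviour of $\varepsilon$ under $B\mapsto pB$ and $B\mapsto aB$ (Lemma 4.3.2), and the dictionary relating the $Z$-series of \cite{I-S} to the $\zeta$-series recalled just above — identifies them with the appropriate powers of $p$ and of $X^{-1/2}t$ times $\zeta_{2r}(d_0d,\varepsilon^l,tX^{-1/2})$, resp. $\zeta_{2r+1}(p^{*}d_0,\varepsilon^l,tX^{-1/2})$ (with $\zeta^{*}$ replacing $\zeta$ in the even case for $p=2$). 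Collecting the surviving powers of $X$, $p$, $2$ and the Hilbert-symbol cocycle factors against $\kappa(d_0,n-1,l)^{-1}$ then yields each of the four closed forms; the disappearance of the even-degree contributions when $l=1$ and $\nu(d_0)>0$, which is what makes (1.2) (resp. (2.2)) shorter than (1.1) (resp. (2.1)), is precisely the vanishing clauses of Propositions 4.3.3(2) and 4.3.4(2).

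The main obstacle is the bookkeeping at $p=2$: one must correctly carve ${\mathcal L}_{n-1,2}^{(1)}(d_0)$ into the normal forms $2\Theta\bot B_1$, match the types to $Q^{(11)},Q^{(13)},Q^{(12)}$ with their built-in degree shifts $t^{m-2r-2}$, $t^{m-2r-4}$, and then shepherd the whole chain of Hilbert symbols $((-1)^{*},(-1)^{*}d_0)_2$, $(2,d_0)_2$ and signs $(-1)^{r(r+1)/2}$ through the two reductions so that they reassemble into the prefactors of (2.1)--(2.2); moreover the three-way case split between $d_0\equiv1\bmod4$, $4^{-1}d_0\equiv-1\bmod4$ and $8^{-1}d_0\in{\bf Z}_2^{*}$ must be maintained throughout, exactly as in the proofs of Propositions 4.3.3 and 4.3.4. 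By contrast the case $p\neq2$ is essentially the $G_p^{(1)}$-computation above plus a direct application of the two reduction formulas.
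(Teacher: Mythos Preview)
Your proposal is correct and follows essentially the same route as the paper: take the constant family $H^{(j)}_{2r+j,\xi}\equiv 1$ (which trivially satisfies (H-$p$-0)--(H-$p$-5), resp.\ (H-2-0)--(H-2-6)), use Lemma~4.2.1 to turn $G_p^{(1)}(B',p^{-(n+1)/2}X)$ into $c(n_1,d_0,d,X)$ or $c(n_1,d_0,X)$, recognise the resulting inner sums as the $Q^{(1)}$-series, apply the reduction formula Proposition~4.3.4 (which in turn rests on Proposition~4.3.3), and then unwind via Lemmas~4.1.3/4.1.4 and~4.3.2 together with the observation $p^{-1}S_{2r+1,p}(d_0)\cap S_{2r+1,p}=S_{2r+1,p}(p^*d_0)$. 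The paper's write-up is terser---it also appeals to Lemma~3.1(1) of \cite{I-K3} for the analogous regrouping argument---but your more explicit bookkeeping, including the $p=2$ type (I)/(II)/(III) split into $Q^{(11)},Q^{(12)},Q^{(13)}$, is exactly what that citation is covering.
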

 
 \begin{proof} 
Put $H_{2r+j,\xi}^{(j)}(B)=1$ for $ j\in\{0,1\}, 1-j \le r \le m/2-j,\xi=\pm 1,$ and $B \in S_{2r+j,p}.$ Then clearly the set $\{ H_{2r+j,\xi}^{(j)} \, | \ j \in \{0,1\}
, 1-j \le r \le n/2-j,\xi=\pm 1 \}$ satisfy the conditions (H-p-0), $\cdots$, (H-p-5) in Subsection 4.3 for any positive even integer $m \le n$. Hence by Lemma 4.2.1 and Proposition 4.3.4, and by using the same argument as in Lemma 3.1 (1) of \cite{I-K3}, we have 
 \begin{eqnarray*}
 \lefteqn{K_{n-1}^{(1)}(d_0,\varepsilon^l,X,t)} \\
&=&\gamma_{l,d_0} X^{\nu(d_0)/2}  \sum_{r=0}^{(n-2)/2}\sum_{d \in {\mathcal U}(n-1,n-2r-1,d_0)}{c(2r,d_0,d,X) \over 2^{1-\delta_{0,r}}\phi_{(n-2r-2)/2}(p^{-2})} \\
&& \times \sum_{B \in S_{2r,p}(d_0d) } {\varepsilon(pB)^l \over \alpha_p(pB)}
(tX^{-1/2})^{\nu(\det (pB))} \\
&+&X^{\nu(d_0)/2}\sum_{r=0}^{(n-2)/2}{c(2r+1,d_0,d,X) \over \phi_{(n-2r-2)/2}(p^{-2})} \\
&& \times \sum_{B \in p^{-1}S_{2r+1,p}(d_0) \cap S_{2r+1,p}} {((-1)^{(r+1)/2},(-1)^{(r+1)/2}d_0)_p^l p^{-l\nu(d_0)}\varepsilon(pB)^l \over \alpha_p(pB)} \\
&& \times (tX^{-1/2})^{\nu(\det (pB))},
\end{eqnarray*}
where $\gamma_{l,d_0}=1$ or $0$ according as $\nu(d_0)l=0$ or $1.$ 
Thus the assertion (1.1) follows from Lemmas 4.1.3 and 4.3.2 by remarking that
$p^{-1}S_{2r+1,p}(d_0) \cap S_{2r+1,p}
= S_{2r+1}(p^{*}d_0).$ Similarly the assertion (1.2) can be proved by remarking that $\varepsilon(pB)=((-1)^{r+1},p)\varepsilon(B)$ for $B \in p^{-1}S_{2r+1,p}(d_0) \cap S_{2r+1,p}.$ 
The assertion for $p=2$ can also be proved in the same manner as above.
\end{proof}

\bigskip

\noindent
{\bf Remark.} As seen above, to prove Proposition 4.4.3, we have only 
to prove Propositions 4.3.2 and 4.3.3 for 
the simplest case where $\{H_{2r+j,\xi}^{(j)}\}$ are constant functions. However 
a similar statement for more general $\{H_{2r+j,\xi}^{(j)}\}$ 
will be necessary for giving an explicit formula for the Rankin-Selberg series of $\sigma_{n-1}(\phi_{I_n(h),1})$ (cf. \cite{K-K}). Indeed, 
the proofs are essentially the same as those for 
the simplest case.
This is why we formulate and prove those propositions in more general settings.  

 \begin{proof}[{\bf Proof of Theorem 4.4.1 in case $p\not=2$.}]
(1)  First let $d_0 \in {\bf Z}_p^*.$ Then by Proposition 4.4.3 (1.1), we have 
\begin{eqnarray*}
\lefteqn{K_{n-1}^{(1)}(d_0,\iota,X,t)={1 \over \phi_{(n-2)/2}(p^{-2})} } \\
&&+\sum_{r=1}^{(n-2)/2}\sum_{d \in {\mathcal U}}{p^{-r(2r+1)}(t^2X^{-1})^r\prod_{i=1}^{r-1}(1-p^{2i-1}X^2) \over 2\phi_{(n-2r-2)/2}(p^{-2})} \\[2mm]
&& \hspace*{10mm}\times (1-p^{-1/2}\xi_0 X)(1+\eta_d p^{r-1/2}X)\zeta_{2r}(d_0d,\iota,tX^{-1/2}) \\[2mm]
&&+\sum_{r=0}^{(n-2)/2}{p^{-(2r+1)(r+1)}(t^2X^{-1})^{r+1/2}\prod_{i=1}^{r}(1-p^{2i-1}X^2) \over \phi_{(n-2r-2)/2}(p^{-2})} \\[2mm]
&& \hspace*{10mm}\times (1-p^{-1/2}\xi_0 X)\zeta_{2r+1}(pd_0,\iota, tX^{-1/2}). 
\end{eqnarray*}
Here we put $\eta_d=\chi(d)$ for $d \in {\mathcal U}.$  
By Theorem 5.1 of \cite{I-S}, we have 
$$\zeta_{2r+1}(pd_0,\iota, tX^{-1/2})=\frac{p^{-1}t X^{-1/2}}{\phi_r(p^{-2})(1-p^{-2}t^2X^{-1})\prod_{i=1}^{r} (1-p^{2i-3-2r}t^2X^{-1})},$$
and
$$\zeta_{2r}(d_0d,\iota,t X^{-1/2})=\frac{(1+\xi_0 \eta_d p^{-r})(1-\xi_0 \eta_d p^{-r-2} t^2X^{-1})}{\phi_r(p^{-2})(1-p^{-2}t^2 X^{-1})\prod_{i=1}^{r} (1-p^{2i-3-2r}t^2 X^{-1})}.$$
Hence the assertion for $n=2$ can be proved by a direct calculation. Suppose that $n \ge 4.$ 
Then $K_{n-1}^{(1)}(d_0,\iota,X,t)$ can be expressed as
$$K_{n-1}^{(1)}(d_0,\iota,X,t)={S(d_0,\iota,X,t) \over \phi_{(n-2)/2}(p^{-2})(1-p^{-2}t^2 X^{-1})\prod_{i=1}^{(n-2)/2} (1-p^{2i-n-1}t^2 X^{-1})},$$
where $S(d_0,\iota,X,t)$ is a polynomial in $t$ of degree $n.$
 We have 
$$2^{-1} (1-p^{-1/2}\xi_0 X) \sum_{\eta =\pm 1} (1+\eta p^{(n-2)/2-1/2}X) (1+ \xi_0 \eta p^{-(n-2)/2})(1-\xi_0 \eta p^{-(n-2)/2-2}t^2 X^{-1})$$
$$=(1-\xi_0 p^{-1/2}X)(1+\xi_0p^{-1/2}X-\xi_0p^{-5/2}t^2-p^{-n}t^2X^{-1}).$$
Hence  
\begin{eqnarray*}
\lefteqn{2^{-1}\sum_{d \in {\mathcal U}}p^{(n-1)(-n+2)/2}(t^2X^{-1})^{(n-2)/2}\prod_{i=1}^{(n-2)/2-1}(1-p^{2i-1}X^2) } \\
&& \times (1-p^{-1/2}\xi_0 X)(1+\eta_d p^{(n-2)/2-1/2}X)\zeta_{n-2}(d_0d,\iota,tX^{-1/2}) \\
&+&p^{-(n-1)n/2}(t^2X^{-1})^{(n-2)/2+1/2}\prod_{i=1}^{(n-2)/2}(1-p^{2i-1}X^2)(p^{-2})^{-1} \\
&& \times (1-p^{-1/2}\xi_0 X)\zeta_{n-1}(pd_0,\iota, tX^{-1/2}) \\[2mm]
&=&{ (p^{-(n-1)}X^{-1}t^2)^{(n-2)/2} (1-\xi_0p^{-5/2}t^2)\prod_{i=0}^{(n-2)/2-1}(1-p^{2i-1}X^2) \over  \phi_{(n-2)/2}(p^{-2})(1-p^{-2}t^2 X^{-1})\prod_{i=1}^{(n-2)/2} (1-p^{2i-n-1}t^2 X^{-1})},
\end{eqnarray*}
and therefore $S(d_0,\iota,X,t)$ can be expressed as 
\def\theequation{\Alph{equation}} 
  \makeatletter
\begin{eqnarray}
\lefteqn{
S(d_0,\iota,X,t) 
} \\
&=& (p^{-(n-1)}X^{-1}t^2)^{(n-2)/2}  \prod_{i=0}^{(n-2)/2-1}(1-p^{2i-1}X^2)(1-p^{-5/2}\xi_0t^2) \nonumber \\
&&+(1-p^{-n+1}t^2X^{-1})U(X,t), \nonumber
\end{eqnarray}
where $U(X,t)$ is a polynomial in $X, X^{-1}$ and $t.$ Now by Proposition 4.4.2, we have 
\begin{eqnarray*}
\lefteqn{P_{n-1}^{(1)}(d_0,\iota,X,t)} \\
&&\hspace*{-5mm}={S(d_0,\iota,X,t) \over \phi_{(n-2)/2}(p^{-2})(1-p^{-2}t^2 X^{-1})\prod_{i=1}^{(n-2)/2} (1-p^{2i-n-1}t^2 X^{-1})\prod_{i=1}^{n-1}(1-p^{i-n-1}Xt^2) }.
\end{eqnarray*}
Hence the power series $P_{n-1}^{(1)}(d_0,\iota,X,t)$ is a rational function in $X$ and $t.$ Since we have $\widetilde F_p^{(1)}(T,X^{-1})=\widetilde F_p^{(1)}(T,X)$ for any $T \in {\mathcal L}_{n-1,p}^{(1)},$ we have $P_{n-1}^{(1)}(d_0,\iota,X^{-1},t)=P_{n-1}^{(1)}(d_0,\iota,X,t).$  This implies that the reduced denominator of  the rational function  $P_{n-1}^{(1)}(d_0,\iota,X,t)$ in $t$ is at most 
\[
(1-p^{-2}t^2 X^{-1})(1-p^{-2}t^2 X)\prod_{i=1}^{(n-2)/2} \{(1-p^{2i-n-1}t^2 X^{-1})(1-p^{2i-n-1}t^2 X)\}.
\]
Hence  we  have
\begin{equation}
S(d_0,\iota,X,t) =\prod_{i=1}^{(n-2)/2} (1-p^{2i-n-2}t^2 X)(a_0(X)+a_1(X)t^2) 
\end{equation}
with some polynomials $a_0(X),\,a_1(X)$ in $X+X^{-1}.$ We easily see $a_0(X)=1.$ By substituting  $p^{(n-1)/2}X^{1/2}$ for $t$ in (A) and (B), and comparing them we see $a_1(X)=-p^{-5/2}\xi_0.$ This proves the assertion.

Next let $d_0 \in p{\bf Z}_p^*.$ 
Then by Proposition 4.4.3 (1.1), we have 
\begin{eqnarray*}
\lefteqn{K_{n-1}^{(1)}(d_0,\iota,X,t) } \\
&=&X^{1/2} \,\{2^{-1}\sum_{r=1}^{(n-2)/2}\sum_{d \in {\mathcal U}}{p^{-r(2r+1)}(t^2X^{-1})^r\prod_{i=1}^{r-1}(1-p^{2i-1}X^2) \over \phi_{(n-2r-2)/2}(p^{-2})} \\[2mm]
&& \hspace*{15mm}\times (1+\eta_d p^{r-1/2}X)\zeta_{2r}(d_0d,\iota,tX^{-1/2}) \\[2mm]
&& \hspace*{10mm}+\sum_{r=0}^{(n-2)/2}{p^{-(2r+1)(r+1)}(t^2X^{-1})^{r+1/2}\prod_{i=1}^{r}(1-p^{2i-1}X^2) \over \phi_{(n-2r-2)/2}(p^{-2})} \\[2mm]
&& \hspace*{15mm}\times \zeta_{2r+1}(p^{-1}d_0, \iota,tX^{-1/2})\}.
\end{eqnarray*}
By Theorem 5.1 of \cite{I-S}, we have 
$$\zeta_{2r+1}(p^{-1}d_0, \iota,t X^{-1/2})=\frac{1}{\phi_r(p^{-2})(1-p^{-2}t^2X^{-1})\prod_{i=1}^{r} (1-p^{2i-3-2r}t^2X^{-1})},$$
and
$$\zeta_{2r}(d_0d,\iota,t X^{-1/2})=\frac{p^{-1}tX^{-1/2}}{\phi_{r-1}(p^{-2})(1-p^{-2}t^2 X^{-1})\prod_{i=1}^{r} (1-p^{2i-3-2r}t^2 X^{-1})}.$$
Thus the assertion can be proved in the same manner as above.

(2) First let $d_0 \in {\bf Z}_p^*.$ Then by Proposition 4.4.3 (1.1), we have 
\begin{eqnarray*}
\lefteqn{K_{n-1}^{(1)}(d_0,\varepsilon,X,t)={1 \over \phi_{(n-2)/2}(p^{-2})} } \\
&& +\sum_{r=1}^{(n-2)/2}\sum_{d \in {\mathcal U}}{p^{-r(2r+1)}(t^2X^{-1})^r\prod_{i=1}^{r-1}(1-p^{2i-1}X^2) \over 2\phi_{(n-2r-2)/2}(p^{-2})} \\[2mm]
&& \hspace*{5mm}\times (1-p^{-1/2}\xi_0 X)(1+\eta_d p^{r-1/2}X)\xi_0 \eta_d \zeta_{2r}(d_0d,\varepsilon, tX^{-1/2}) \\[2mm]
&& +\sum_{r=0}^{(n-2)/2}{p^{-(2r+1)(r+1)}(t^2X^{-1})^{r+1/2}\prod_{i=1}^{r}(1-p^{2i-1}X^2) \over \phi_{(n-2r-2)/2}(p^{-2})} \\[2mm]
&& \hspace*{5mm} \times (1-p^{-1/2}\xi_0 X) \zeta_{2r+1}(pd_0,\varepsilon,tX^{-1/2}).
\end{eqnarray*}
By Theorem 5.2 of \cite{I-S}, 
$$\zeta_{2r}(d_0d,\varepsilon,t X^{-1/2})=\frac{1+\xi_0 \eta_d p^{-r}}{\phi_{r}(p^{-2}) \prod_{i=1}^r(1-p^{-2i}t^2X^{-1})},$$
and 
$$\zeta_{2r+1}(pd_0,\varepsilon, t X^{-1/2})=\frac{p^{-r-1}tX^{-1/2}}{\phi_{r}(p^{-2}) \prod_{i=1}^{r+1}(1-p^{-2i}t^2X^{-1})}.$$
Hence  $K_{n-1}^{(1)}(d_0,\varepsilon,X,t)$ can be expressed as
$$K_{n-1}^{(1)}(d_0,\varepsilon,X,t)={T(d_0,\varepsilon,X,t) \over \phi_{(n-2)/2}(p^{-2})\prod_{i=1}^{n/2} (1-p^{-2i}t^2 X^{-1})},$$
where $T(d_0,\iota,X,t)$ is a polynomial in $t$ of degree $n,$ and expressed as
\begin{eqnarray}
\hspace*{4mm} T(d_0,\iota,X,t)
&=& (p^{-n}X^{-1}t^2)^{n/2}  (1-\xi_0p^{-1/2}X)\prod_{i=1}^{(n-2)/2}(1-p^{2i-1}X^2) 
\\ 
&&+(1-p^{-n}t^2X^{-1})V(X,t), \nonumber
\end{eqnarray}
with a polynomial $V(X,t)$ in $X, X^{-1}$ and $t.$ On the other hand, by using the same argument as (1), we can show that
\begin{equation}
T(d_0,\varepsilon,X,t) =\prod_{i=1}^{(n-2)/2} (1-p^{-2i-1}t^2 X)(1+b_1(X)t^2) 
\end{equation}
with $b_1(X)$ a polynomial in $X+X^{-1}.$  Thus, by substituting  $p^{n/2}X^{1/2}$ for $t$ in (C) and (D), and comparing them we prove the assertion. 

Next let $d_0 \in p{\bf Z}_p^*.$ Then by Proposition 4.4.3 (1.2), we have 
\begin{eqnarray*}
K_{n-1}^{(1)}(d_0,\varepsilon,X,t)
&=&X^{1/2}\sum_{r=0}^{(n-2)/2}{p^{-(2r+1)(r+1)-r}(t^2X^{-1})^{r+1/2}\prod_{i=1}^{r}(1-p^{2i-1}X^2) \over \phi_{(n-2r-2)/2}(p^{-2})} \\
&& \times \zeta_{2r+1}(p^{-1}d_0,\varepsilon,tX^{-1/2}).
\end{eqnarray*}
By Theorem 5.2 of \cite{I-S}, 
$$\zeta_{2r+1}(p^{-1}d_0,\varepsilon, tX^{-1/2})=\frac{1}{\phi_{r}(p^{-2}) \prod_{i=1}^r(1-p^{-2i}t^2X^{-1})}.$$
Hence 
\begin{eqnarray*}
K_{n-1}^{(1)}(d_0,\varepsilon,X,t)
&=&p^{-1}t\sum_{r=0}^{(n-2)/2}{p^{-(2r+1)r}(p^{-2}t^2X^{-1})^{r}\prod_{i=1}^{r}(1-p^{2i-1}X^2) \over \phi_{(n-2r-2)/2}(p^{-2})} \\
&& \times  \frac{1}{\phi_{r}(p^{-2}) \prod_{i=1}^r(1-p^{-2i}t^2X^{-1})}.
\end{eqnarray*}
Thus the assertion can be proved in the same way as above. 
\end{proof}

\begin{proof}
[{\bf Proof of Theorem 4.4.1 in case $p=2$.}]
The assertion can also be proved by Proposition 4.4.3 (2) in the same way as above.
\end{proof}
 
\begin{props}
Let $k$ and $n$ be positive even integers. Given a Hecke eigenform $ h \in \mathfrak{S}_{k-n/2+1/2}^+(\varGamma_0(4)),$  let $f \in \mathfrak{S}_{2k-n}(\varGamma^{(1)})$ be the primitive form as in Section 2. Then
\[
L(s, h)=L(2s,f)\sum_{d_0 \in {\mathcal F}^{(-1)^{n/2} } }c_h(|d_0|) |d_0|^{-s} L(2s-k+n/2+1, \left({d_0 \over *}\right))^{-1},
\]
where $L(s, \left({d_0 \over *}\right))$ is Dirichlet's $L$-function for the character  $\left({d_0 \over *} \right).$
\end{props}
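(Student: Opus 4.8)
The plan is to sort the Dirichlet series $L(s,h)=\sum_{m}c_h(m)m^{-s}$, in which $m$ runs over the positive integers with $(-1)^{n/2}m\equiv 0,1\ {\rm mod}\ 4$, according to the fundamental discriminant attached to $m$. Every such $m$ is uniquely of the form $m=|d_0|N^2$ with $N\in{\bf Z}_{>0}$ and $d_0=\mathfrak{d}_{(-1)^{n/2}m}$; since $(-1)^{n/2}d_0=|d_0|$ and $d_0$ has the same sign as $(-1)^{n/2}$, the pair $(d_0,N)$ runs exactly over ${\mathcal F}^{((-1)^{n/2})}\times{\bf Z}_{>0}$ as $m$ runs over the admissible indices. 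Hence
$$L(s,h)=\sum_{d_0\in{\mathcal F}^{((-1)^{n/2})}}|d_0|^{-s}\sum_{N=1}^{\infty}c_h(|d_0|N^2)\,N^{-2s}.$$

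Next I would invoke the Kohnen plus-space form of the Shimura correspondence for the Hecke eigenform $h$ (cf. Kohnen \cite{Ko}): writing $f=\sum_{m\ge 1}c_f(m){\bf e}(mz)$ for the associated primitive form of weight $2k-n$, one has, for every $d_0\in{\mathcal F}^{((-1)^{n/2})}$ and every $N\ge 1$,
$$c_h(|d_0|N^2)=c_h(|d_0|)\sum_{d\mid N}\mu(d)\left(\tfrac{d_0}{d}\right)d^{\,k-n/2-1}\,c_f(N/d).$$
Substituting this and writing $N=dM$, the inner series over $N$ factors as a product of two Dirichlet series,
$$\sum_{N=1}^{\infty}c_h(|d_0|N^2)\,N^{-2s}=c_h(|d_0|)\Bigl(\sum_{d\ge 1}\mu(d)\left(\tfrac{d_0}{d}\right)d^{\,k-n/2-1-2s}\Bigr)\Bigl(\sum_{M\ge 1}c_f(M)M^{-2s}\Bigr),$$
in which the first factor is $L\Bigl(2s-k+n/2+1,\bigl(\tfrac{d_0}{*}\bigr)\Bigr)^{-1}$ by the Euler product of Dirichlet's $L$-function and the second factor is $L(2s,f)$. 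Substituting this back into the previous display and pulling the common factor $L(2s,f)$ out of the sum over $d_0$ gives precisely the asserted identity.

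The computation is essentially formal once the plus-space relation is at hand, so the only points that need care are (i) checking that $m\mapsto(d_0,N)$ is a bijection onto the support of the Fourier expansion of $h$ and that the sign condition on $d_0$ is exactly membership in ${\mathcal F}^{((-1)^{n/2})}$, and (ii) keeping track of the exponents, the crucial one being that the ``integral part'' of the weight of $h$ equals $k-n/2$, which forces the shift $2s-k+n/2+1$ in the $L$-factor. Absolute convergence for ${\rm Re}(s)\gg 0$, which justifies the rearrangement of the double sum, follows from the standard polynomial bounds on $c_f(m)$ and $c_h(m)$; alternatively one may read the whole chain of equalities as an identity of formal Dirichlet series. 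I do not expect any genuine obstacle beyond invoking the Shimura--Kohnen relation in the precise normalization above.
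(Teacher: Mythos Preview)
Your proof is correct and follows exactly the approach the paper uses: decompose $L(s,h)$ according to the fundamental discriminant $d_0$ of $(-1)^{n/2}m$, and then evaluate the inner sum $\sum_{N\ge 1}c_h(|d_0|N^2)N^{-2s}$ as $c_h(|d_0|)\,L(2s-k+n/2+1,(\tfrac{d_0}{*}))^{-1}L(2s,f)$. The paper merely states this last identity as a known fact, while you supply its derivation from the Kohnen--Shimura coefficient relation, so your argument is in fact more detailed than the paper's own proof.
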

\begin{proof}
The assertion can immediately be proved by remarking the fact that
$$\sum_{m=1}^{\infty} c_h(|d_0|m^2)m^{-2s}=c_h(|d_0|)L(2s-k+n/2+1, \left({d_0 \over *}\right))^{-1}L(2s,f)$$
for $d_0 \in {\mathcal F}^{(-1)^{n/2}}.$
\end{proof}
\begin{proof}
[{\bf Proof of Theorem 2.1.}]
By Theorem 4.4.1, we have
\begin{eqnarray*}
\lefteqn{
\prod_p P_{n-1,p}^{(1)}(d_0,\iota_p,\alpha_p,p^{-s+k/2+n/4-1/4})=|d_0|^{-s+k/2+n/4-5/4} }\\
&&\times \prod_{i=1}^{(n-2)/2} \zeta(2i)  L(2s-k-n/2+3,\left({d_0 \over *}\right))^{-1} \prod_{i=1}^{(n-2)/2}L(2s-n+2i+1,f),
\end{eqnarray*}
and
\begin{eqnarray*}
\lefteqn{
\prod_p P_{n-1,p}^{(1)}(d_0,\varepsilon_p,\alpha_p,p^{-s+k/2+n/4-1/4})
=|d_0|^{-s+k/2+n/4-5/4} }\\
&& \times  \prod_{i=1}^{(n-2)/2} \zeta(2i)L(2s-k+n/2+1,\left({d_0 \over *}\right))^{-1} \prod_{i=1}^{(n-2)/2}L(2s-n+2i,f).
\end{eqnarray*}
Thus the assertion follows from Theorem 3.2 and Proposition 4.4.4.
\end{proof}

\noindent
{\bf Remark.} Let $m$ be a nonnegative integer, and let $k$ be a positive integer such that $k > m+2$. Let $E_k^{(m+1)}$ be the Siegel Eisenstein series of weight $k$ and of degree $m+1$. (For the definition of the Siegel Eisenstein series, see, for example, \cite{Ha}.) 
Suppose that  $m >0$ and let $e_{k,1}^{(m+1)}$ be the first Fourier-Jacobi coefficient of $E_k^{(m+1)}.$ Then $e_{k,1}^{(m+1)}$ belongs to $J_{k,\, 1}(\varGamma_J^{(m)}).$
In \cite{Ha}, Hayashida defined the generalized Cohen Eisenstein series $E_{k-1/2}^{(m)}$ as $E_{k-1/2}^{(m)}=\sigma_m(e_{k,1}^{(m+1)}),$
where $\sigma_m$ is the Ibukiyama isomorphism.  It turns out that $E_{k-1/2}^{(m)}$ belongs to $\mathfrak{M}_{k-1/2}^{+}(\varGamma_0^{(m)}(4)),$ and in particular, $E_{k-1/2}^{(1)}$ 
coincides with the Cohen Eisenstein series defined 
 in \cite{Co}. 
Let $k$ and $n$ be positive even integers such that $k >n+1.$ Then, 
$E_{2k-n}^{(1)}$ is the Hecke eigenform corresponding to $E_{k-n/2+1/2}^{(1)}$ under the Shimura correspondence, and $E_k^{(n)}$ can be regarded
as a  non-cuspidal version of the Duke-Imamo{\=g}lu-Ikeda lift of $E_{k-n/2+1/2}^{(1)}.$ Therefore, by using the same method as in the proof of Theorem 2.1, we can express the Koecher-Maass series of $E_{k-1/2}^{(n-1)}$ explicitly in terms of $L(s,E_{k-n/2+1/2}^{(1)})$ and $L(s,E_{2k-n}^{(1)}).$

\subsection*{Acknowledgements}
The authors thank the referee for 
suggesting them the remark after the proof of Theorem 2.1.
The first named author was partly supported by Grant-in-Aid for Scientific Research, JSPS (No.\,21540004), and the second named author was partially supported by Grants-in-Aid for Scientific Research, JSPS (No.\,23224001 and No.\,22340001).

\end{document}